\newtheorem{theorem}{Theorem}[section]
\newtheorem{corollary}[theorem]{Corollary}
\newtheorem{definition}[theorem]{Definition}
\newtheorem{lemma}[theorem]{Lemma}
\theoremstyle{remark}
\newtheorem*{remark}{Remark}
\newtheorem*{example}{Example}
\renewcommand{\geq}{\geqslant}
\renewcommand{\le}{\leqslant}
\title{Elliptic Loop Spaces}
\author{Emile Bouaziz \quad Adeel A. Khan}
\address{Academia Sinica, Taipei}
\newcommand{\HE}{\mathcal{H}_{E}}
\newcommand{\HA}{\mathcal{H}_{A}}
\newcommand{\DD}{\operatorname{\mathcal{D}}}
\newcommand{\FUN}{\operatorname{Fun}^{\otimes}}
\newcommand{\Fun}{\operatorname{\mathbf{Fun}}^{\otimes}}
\newcommand{\QC}{\operatorname{QCoh}}
\newcommand{\OO}{\mathcal{O}}
\newcommand{\hh}{L_{E}}
\newcommand{\YY}{\mathcal{Y}}
\newcommand{\XX}{\mathcal{X}}
\newcommand{\spc}{\operatorname{spec}}
\newcommand{\Mod}{\operatorname{Mod}}
\newcommand{\GM}{\mathbf{G}_{m}}
\newcommand{\PP}{\mathbf{P}^{1}}
\newcommand{\SSS}{\mathcal{S}_{E}}
\newcommand{\SSSA}{\mathcal{S}_{A}}
\newcommand{\dAlg}{\operatorname{CAlg}_{k}}
\newcommand{\CCC}{\mathbf{\otimes}\textbf{-}\mathbf{Cat}} 
\newcommand{\Ind}{\operatorname{Ind}}
\newcommand{\Perf}{\operatorname{Perf}}
\newcommand{\Map}{\operatorname{Maps}}
\newcommand{\FM}{\Phi}
\newcommand{\mm}{\mathbf{M}_{E}}
\newcommand{\eval}{\operatorname{ev}}
\newcommand{\TT}{\mathbf{T}}
\newcommand{\etr}{\operatorname{tr}^{E}}
\newcommand{\homs}{\operatorname{Hom}}
\newcommand{\hhoms}{\underline{\operatorname{Hom}}}
\newcommand{\hhg}{L^{G}_{E}}
\newcommand{\UU}{\mathcal{U}}
\newcommand{\cl}{{\operatorname{cl}}}
\newcommand{\id}{\mathrm{id}}
\begin{document} 

\begin{abstract} We introduce an elliptic avatar of loop spaces in derived algebraic geometry, completing the familiar trichotomoy of rational, trigonometric and elliptic objects.  Heuristically, the elliptic loop space of $\YY$ is the stack of maps to $\YY$ from a certain exotic avatar $\SSS$ of the elliptic curve $E$, such that the category of quasi-coherent sheaves on $\SSS$ is the convolution category of zero-dimensionally supported coherent sheaves on $E$. For quotient stacks, the structure sheaf of the elliptic loop space gives rise to a theory of equivariant elliptic Hodge cohomology. \end{abstract}

\maketitle

\tableofcontents

\section{Introduction}

This is the first in a series of papers aiming to develop foundations for equivariant elliptic cohomology. In this instalment we will introduce the \emph{elliptic loop space} $\hh(\YY)$ of a stack $\YY$, whose structure sheaf is a primordial incarnation of the elliptic cohomology of $\YY$.
We work over a field $k$ of characteristic zero throughout the introduction.\footnote{This is for expository reasons; see also the discussion in \ref{ssec:other} below.}

\subsection{Rational and trigonometric loop spaces}

There is a standard trichotomy in geometry and representation theory between \emph{rational}, \emph{trigonometric} and \emph{elliptic} objects, corresponding to the natural trichotomy of $1$-dimensional abelian group schemes
\renewcommand{\labelenumi}{(\roman{enumi})}
\begin{enumerate}
    \item \emph{Rational:} $\mathbf{G}_{a}$,
    \item \emph{Trigonometric:} $\mathbf{G}_{m}$,
    \item \emph{Elliptic:} elliptic curves $E$.
\end{enumerate}

We define the \emph{rational} and \emph{trigonometric loop spaces} of a scheme $X$ as the mapping stacks\footnote{Note that these mapping stacks must be formed in the sense of derived algebraic geometry.  In the introduction, schemes are assumed of finite type over $k$.} \[ L^{\mathrm{rat}}(X) := \Map(B\widehat{\mathbf{G}}_{a}, X), \quad L^{\mathrm{trig}}(X) := \Map(B\mathbf{Z}, X). \]

In view of the identification $B\mathbf{Z} \simeq S^1$, $L^{\mathrm{trig}}(X)$ is nothing else but the familiar \emph{free loop space} \[ L(X) = \Map(S^1, X) \simeq X \times_{X\times X} X, \] whose space of functions is \[ \OO(L^{\mathrm{trig}}(X)) \simeq \OO_{X} \otimes^L_{\OO_X\otimes\OO_X} \OO_{X}, \] i.e. the complex of Hochschild chains on $X$. Moreover, taking into account the $S^1$-action on $L(X)$ by loop rotation, we can also recover the cyclic and periodic cyclic homology of $X$.

On the other hand, one can show that $L^{\mathrm{rat}}(X)$ is identified with the \emph{$(-1)$-shifted tangent bundle} $\TT[-1]X$, with space of functions \[ \OO(L^{\mathrm{rat}}(X)) \simeq \operatorname{Sym}^*_{X}(\mathbf{L}_{X}[1]) \simeq \Lambda^*_{X}(\mathbf{L}_{X}), \] where $\mathbf{L}_X$ is the cotangent complex (cf. \cite{Hennion}). Note that this is the derived de Rham complex, without the differential; in other words, it is the complex computing (derived) \emph{Hodge cohomology}. The de Rham differential is encoded by the translation action of $B\widehat{\mathbf{G}}_a$ on $L^{\mathrm{rat}}(X)$. 

The Hochschild--Konstant--Rosenberg theorem can now be interpreted geometrically as a canonical isomorphism \[ L^{\mathrm{rat}}(X) \simeq L^{\mathrm{trig}}(X). \] The respective $S^1$- and $B\widehat{\mathbf{G}}_a$-actions factor through their affinizations $$\operatorname{Aff}(S^1) \simeq B\mathbf{G}_a \simeq \operatorname{Aff}(B\widehat{\mathbf{G}}_a)$$ and the above upgrades to an $B{\mathbf{G}}_a$-equivariant isomorphism. See \cite{TVHKR} or \cite{BNloop}.

Note that the above constructions also make sense for stacks $\YY$. For $\YY$ the classifying stack $BG = */G$ of an algebraic group $G$, for instance, we get the adjoint quotients \[ L^{\mathrm{rat}}(BG) = \mathfrak{g}/G, \quad L^{\mathrm{trig}}(BG) = G/G. \] In fact the isomorphism \[ L^{\mathrm{rat}}(\YY) \simeq \TT[-1]\YY \] still holds for general Artin stacks $\YY$. \footnote{A pointed morphism $(B\widehat{G},*)\rightarrow(\YY,y)$ equivalent to a morphism of Lie algebras $\operatorname{Lie}(G)\rightarrow\TT_{y,\YY}[-1],$ where $\TT_{y,\YY}[-1]$ has the Lie algebra structure constructed in \cite{Hennion}. Taking $G=\mathbf{G}_{a}$ we have the claim. }

The HKR isomorphism, however, does not extend to stacks. Indeed, $\TT[-1]\YY$ is local on $\YY$ with respect to smooth or even fppf covers, whereas as $L(\YY)$ is only local in the Zariski topology. We can understand this difference in cohomological terms. While de Rham cohomology satisfies étale descent, hence is completely determined by its values on schemes, Hochschild homology is by contrast a ``genuine'' invariant of stacks. For global quotient stacks $\YY = X/G$, this is reflected in the difference between Borel-type and genuine equivariance (in the sense of genuine equivariant homotopy theory).

\subsection{\texorpdfstring{$1$}{1}-categorical Cartier duality}

In order to motivate our construction of elliptic loop spaces, we first explain how the rational and trigonometric loop spaces can be regarded as instances of the same general construction. This will also explain the connection with the group schemes $\mathbf{G}_a$ and $\mathbf{G}_m$.

Given an algebraic group $A$, we define the $1$-categorical Cartier dual of $A$ as the group-theoretic mapping stack $$\check{A}:=\operatorname{GroupMap}(A,B\GM).$$ We think of this as a $1$-categorical analogue of the character group $\operatorname{GroupMap}(A, \GM)$.

On the category $\operatorname{Coh}_0(A)$ of coherent sheaves on $A$ with $0$-dimensional support, one can define a convolution product $\star$. There are symmetric monoidal equivalences of categories \begin{align*}
  \big(\operatorname{Coh}(B\widehat{\mathbf{G}}_{a}),\otimes\big) \simeq~&\big(\operatorname{Coh}_{0}(\mathbf{G}_{a}),\star\,\big),\\
  \big(\operatorname{Coh}(B\mathbf{Z}),\otimes\big) \simeq~&\big(\operatorname{Coh}_{0}(\GM),\star\,\big),
\end{align*}
These equivalences encode the $1$-categorical Cartier dualities between $\mathbf{G}_a$ and $B\widehat{\mathbf{G}}_{a}$, and $\GM$ and $B\mathbf{Z}$, respectively.

\subsection{Elliptic loop spaces}

As we have seen, the rational and trigonometric loop spaces $L^{\mathrm{rat}}(\YY)$ and $L^{\mathrm{trig}}(\YY)$ may be regarded as geometrizations of de Rham cohomology and periodic cyclic homology, respectively. The goal of this paper is to introduce an elliptic analogue.

\subsubsection{Formal syntax}

Given an elliptic curve $E$ over $k$, the main output of our construction will be a canonical functor \[ \YY \mapsto L_E(\YY), \] valued in derived\footnote{As we have already seen in the rational and trigonometric cases, the use of derived algebraic geometry is inevitable (even for $\YY$ underived).} stacks equipped with an $E$-action.

For sufficiently nice stacks (such as global quotients), we will see that $\hh(\YY)$ is a Zariski cosheaf in $\YY$, and preserves fibre squares. We regard these as geometrizations of the Mayer--Vietoris property and K\"unneth formulas, respectively, for elliptic cohomology. See Theorems~\ref{MVK} and \ref{Kun}.

The \emph{covariant} functoriality of $\YY \mapsto \hh(\YY)$ accounts for the \emph{contravariant} functoriality (pull-back maps) of in elliptic cohomology. In order to define Gysin maps or push-forwards (e.g. for smooth proper morphisms), we need to incorporate certain twists\footnote{called Thom sheaves in \cite{GKV}}. These will be encoded by natural functors
$$\Theta_{\YY}:\QC\big(\YY\big)\longrightarrow\QC^{E}\big(\hh(\YY)\big),$$ where the superscript $^E$ indicates $E$-equivariant objects, or more precisely by the determinants
$$\vartheta(-) := \operatorname{det}\Theta(-).$$
For smooth proper $f:\YY\rightarrow\XX$ we will then construct a canonical map of quasi-coherent sheaves $$L_{E}(f)_{*}\vartheta_{\YY}(\mathbf{L}_{f})\longrightarrow\OO_{\hh(\XX)}$$ where $\mathbf{L}_f$ denotes the relative cotangent complex.

\subsubsection{Construction}

Let $E$ be an elliptic curve over $k$. In order to define the \emph{elliptic loop space} $L_{E}\YY$, $1$-categorical Cartier duality suggests that we look for an object $\SSS$ with a symmetric monoidal equivalence $$\big(\!\operatorname{Coh}(\SSS),\otimes\big)\simeq\big(\!\operatorname{Coh}_{0}(E),\star\,\big),$$ and then define $$L_{E}(\YY):=\Map(\SSS,\YY).$$ The hypothetical space $\SSS$ may be viewed as an avatar of $E$ which is \emph{small} enough to ensure the Zariski-locality of $\YY \mapsto L_E(\YY)$. Note that this fails for a less exotic construction like $\Map(E,\YY)$.

Adopting a Tannakian perspective, we bypass the question of defining the space $\SSS$ and instead work with the tensor category \[ \big(\! \operatorname{Coh}(\SSS),\otimes \big) := \big(\!\operatorname{Coh}_{0}(E),\star\big), \]
or rather its ind-completion\footnote{obtained by formally adjoining filtered colimits}, which we denote by $\HE$. We then define $L_{E}\YY$ to classify tensor functors $$\big(\!\QC(\YY),\otimes\big)\rightarrow \HE.$$ When $\YY$ is \emph{Tannakian}, i.e. satisfies the main result of \cite{BHL}, the analogous construction for $\mathbf{G}_a$ or $\GM$ in place of $E$ recovers the rational and trigonometric loop spaces. The natural action of $E$ on $\HE$ accounts for the $E$-action on $\hh(\YY)$.

When $\YY$ is suitably nice (e.g. a global quotient), we will see that $\hh(\YY)$ can be realized as an open \[\hh(\YY) \subset \Map(E,\YY),\] invariant under the $E$-action (where $E$ acts on itself by translation).
Namely, a morphism $S\rightarrow L_{E}\YY$ is the same thing as an $S$-family of maps $$f:S\times E\rightarrow\YY$$ with the property that for all $s\in S$ and $F\in\Perf(\YY)$, the Fourier--Mukai transform of $f_{s}^{*}F \in \Perf(E)$ has $0$-dimensional support.

The twist functor $\Theta_\YY$ is defined as follows. Given $F\in\QC(Y)$ and a point $\phi$ of $L_{E}(\YY)$, $\Theta_{\YY}(F) \in \QC^E(\hh(\YY))$ has fibre at $\phi$ given by $$\phi^{*}\Theta_{\YY}(F):=\operatorname{Hom}_{E}(\mathcal{O}_{e},\phi(F)),$$ with $e$ the neutral element.

\subsubsection{Example: schemes}

Let us describe the elliptic loop space $\hh(\YY)$ in the case of $\YY=Y$ a scheme. By the Zariski codescent property, it will suffice to consider the affine case $Y=\spc(R)$.

By definition, $\hh(Y)$ classifies tensor functors $$\big(\!\QC(Y),\otimes\big)\longrightarrow\big(\HE,\star\big).$$ Since $\QC(Y)$ is generated as a tensor category by the structure sheaf $\OO_Y$, which must map to the monoidal unit $\OO_{e}$ of $\HE$ (where $e$ is the neutral element), such a tensor functor is specified entirely by a $k$-algebra map $$R\longrightarrow\operatorname{End}(\mathcal{O}_{e})\simeq k[\eta]$$ with $\eta$ in homological degree $+1$. Such are classified by the $(-1)$-shifted tangent bundle $\TT[-1]Y$, see \cite{BFN}.

The $E$-action is specified by a comodule structure on $\OO(\TT[-1]Y)$ for the coalgebra $\Gamma(E,\OO)$, which by a standard application of Koszul duality amounts to an derivation of homological degree $-1$ on $\TT[-1]Y$, which is the de Rham differential.

Thus we have a (non-canonical) isomorphism $$\hh(X)\simeq\TT[-1]X.$$ In particular, the loop spaces \[ L^{\mathrm{rat}}(X) \simeq L^{\mathrm{trig}}(X) \simeq \hh(X) \] all agree in the case of schemes.

\subsubsection{Variation of the elliptic curve}

A novel aspect of the elliptic theory, in comparison to the rational and trigonometric variants, is the possibility of varying the elliptic curve $E$. This makes the construction $\hh(X)$ richer than $L^{\mathrm{rat}}(X)$ and $L^{\mathrm{trig}}(X)$ even in the case of schemes where all three are abstractly isomorphic.

First, $\hh(\YY)$ is functorial with respect to isogenies of elliptic curves. Moreover, given any \emph{relative} elliptic curve $E \to S$, there is a variant which takes a stack $\YY$ over $S$ and outputs a derived stack $L_{E/S}(\YY)$ over $S$ (with an $E$-action). For example, this may be applied to the universal elliptic curve $\mathcal{E}$ over the moduli stack $\mathcal{M}_{\operatorname{ell}}$ classifying smooth elliptic curves.

Therefore, even for schemes $\YY = X$ the elliptic theory is richer than the rational and trigonometric versions. As we have seen above, there exists an isomorphism $L_{E}X\simeq\TT[-1]X$ depending on a choice of trivialization of $$H^{1}(E,\OO)\simeq H^{0}(E,\Omega)^{*}\simeq\omega^{*}_{\operatorname{ell}}|_{\{E\}},$$ where $\omega_{\operatorname{ell}} \to \mathcal{M}_{\operatorname{ell}}$ is the Hodge bundle. Said differently, the (fibrewise) affinization of $\mathcal{E} \to \mathcal{M}_{\operatorname{ell}}$ is the non-trivial family of copies $B\mathbf{G}_{a}$, obtained from $\omega_{\operatorname{ell}}^{*}$ via the action of $\GM$ on $B\mathbf{G}_{a}$.

\subsubsection{Example: classifying stacks}

Let us now take $\YY$ to be the classifying stack $BGL_r$ of a general linear group. Recall that there are equivalences $$ L^{\operatorname{rat}}\big(BGL_{r}\big)\simeq \mathfrak{gl}_{r}/GL_{r}, \quad L\big(BGL_{r}\big)\simeq GL_{r}/GL_{r},$$ where the stack quotients are with respect to the adjoint action; see \cite{BNloop}.\footnote{%
  In \cite{BNloop}, the authors consider the \emph{unipotent loop space} $L^{\mathrm{uni}}(\YY) := \Map(B\mathbf{G}_a, \YY)$ rather than the rational loop space $\Map(B\widehat{\mathbf{G}}_a, \YY)$. From our perspective, it is the rational loop space that geometrizes de Rham cohomology for stacks. Indeed, in the above example we have $L^{\operatorname{uni}}\big(BGL_{r}\big) \simeq \mathcal{N}/GL_{r}$, where $\mathcal{N}$ denotes the cone of unipotent elements, which is distinct from the $(-1)$-shifted tangent $\TT[-1]BGL_r \simeq \mathfrak{gl}_{r}/GL_{r}$.
}
We can alternatively realize these as moduli stacks: $$\mathfrak{gl}_{r}/GL_{r}\simeq \{\textrm{length}\, r\,\textrm{sheaves}\,\textrm{on}\,\mathbf{G}_{a}\},$$ $$GL_{r}/GL_{r}\simeq\{\operatorname{length}\, r\,\textrm{sheaves}\,\textrm{on}\,\mathbf{G}_{m}\}.$$ 

For the elliptic counterpart, we claim $$L_{E}\big(BGL_{r}\big) \simeq \{\textrm{length}\, r\,\textrm{objects}\,\textrm{of}\,\HE\} \simeq \operatorname{Bun}_{r}^{\textrm{ss},0}(E),$$ where the equivalence with the moduli stack of semistable vector bundles on $E$ of rank $r$ and degree $0$ is due to Atiyah (see \cite{At}).

We first consider the case $r=1$. By definition, $$L_{E}(B\mathbf{G}_{m})=\Fun(\QC(B\mathbf{G}_{m}),\HE).$$ Tensor functors out of $\QC(B\GM)$ are entirely by their values on the tensor generator $\mathcal{O}(1)$. The image is invertible, hence given by some skyscraper sheaf $\OO_{x}$, which we note has a $\GM$'s worth of automorphisms. We find thus $$\hh(B\mathbf{G}_{m})\simeq E\times B\GM.$$
The twists in this case are given by $$\Theta_{B\GM}(\mathcal{O}(d))\simeq \OO_{\{\operatorname{d-torsion}\,\operatorname{points}\}}\otimes\mathbf{R}\Gamma(E,\mathcal{O}).$$

Similarly, a tensor functor out of $\QC(BGL_{r})$ is specified\footnote{%
  In fact, $\QC(BGL_{r})$ is the universal tensor category containing an object $x$ with $\wedge^{r}x$ invertible and $\wedge^{r+1}x\simeq 0$, by a result of Iwanari (see \cite{Iwa}).
} by the image of the standard representation $V$, subject to invertibility of $$\operatorname{det}V=\bigwedge^{r}V.$$ This invertibility accounts for the length $r$ condition on the image of $V$. (Alternatively, note that the trace $r$ of $V$ must be preserved by a tensor functor.) 

We remark that the canonical map $\hh(BB)\rightarrow \hh(BGL_{r})$ obtained by functoriality from the universal flag bundle $BB\rightarrow BGL_{r}$ recovers the \emph{elliptic Springer resolution} of \cite{BNEll}. This is compatible with the corresponding facts for rational, unipotent, and trigonometric/free loops.

\subsubsection{Tensor categories}

Since our construction of elliptic loop spaces is internal to the world of tensor categories, it is clear that we may contemplate the following generalizations of $\hh(-)$:

First, instead of a stack $\YY$, we can more generally take as input any tensor category $C$. The elliptic loop space $$\hh(C):=\Fun(C,\HE)$$ is then defined to classify tensor functors $$C \rightarrow \HE.$$

In another direction, we may also contemplate modifying the tensor category $\HE$ to get different flavours of loop spaces. As we have already seen, we may take the convolution categories of $0$-dimensionally supported sheaves on $\mathbf{G}_a$ or $\GM$, or more generally the convolution category $\HA$ of $0$-dimensionally supported sheaves on any abelian group scheme $A$. For higher dimensional abelian varieties, this would reproduce \emph{iterated} loop spaces in the case of schemes.

We may even consider a general tensor category $\mathcal{H}$ that is not necessarily of the form $\HA$. In order to obtain a theory analogous to classical loop spaces, we should impose the following properties:
\begin{enumerate}
  \item The centre of $\mathcal{H}$ satisfies $$Z(\mathcal{H}):= \operatorname{End}_{\mathcal{H}}(\mathbf{1})\simeq k\oplus k[-1].$$ This ensures that on affine schemes $S$, we have $$\Fun(\QC(S), \mathcal{H}) \simeq \TT[-1]S.$$
  \item $\mathcal{H}$ is an integral domain: the monoidal product of non-zero objects is non-zero. This means that $\mathcal{H}$ behaves like a ``point'', and guarantees Zariski-locality of $S \mapsto \Fun(\QC(S), \mathcal{H})$.
\end{enumerate}
As one intriguing example we could take the (non-algebraic) group stack $\mathbf{G}_{m}/q^{\mathbf{Z}}$, and consider the convolution category of coherent sheaves whose pull-back to $\mathbf{G}_{m}$ has $0$-dimensional support. Note that sheaves on $\mathbf{G}_{m}/q^{\mathbf{Z}}$ are left modules for the quantum torus $$A_{q}:=k\langle x^{\pm},y^{\pm}\rangle /\big(xy=qyx\big).$$

\subsection{Equivariant elliptic Hodge cohomology}

Let $\YY = X/G$ be a global quotient stack, where $G$ is an algebraic group acting linearly on a quasi-projective scheme $X$. From our point of view the \emph{$G$-equivariant elliptic cohomology} of $X$ can be completely extracted from the structure sheaf of the elliptic loop space $$\hh^G(X) := \hh(X/G),$$ together with the loop rotation action of $E$. More precisely, by analogy with the trigonometric case, we need to apply a Tate construction with respect to the $E$-action, for it is only after passing from Hochschild homology to periodic cyclic homology that we have $\mathbf{A}^1$-homotopy invariance.

In this first paper we content ourselves with introducing what we call the \emph{equivariant elliptic Hodge cohomology}, the cohomology theory obtained from the structure sheaf of $\hh^G(X)$ (without any Tate construction); discussion of equivariant elliptic cohomology itself will be postponed to the sequel of this paper. This is an assignment $$ X \mapsto HH^{G}_{E}(X) \in \QC^E\!\big(\hh^G(*)\big),$$ equipped with various natural functorialities in $X$, $E$, and $G$. By definition, we set \[ HH^G_E(X) := \hh^G(\pi_X)_* \big( \OO_{\hh^G(X)} \big), \] where $\pi_X : X/G \to BG$ is the projection. The natural functorialities of $\hh^G(-)$ in $X$, $E$, and $G$, descend to $HH^G_E(-)$.

As we have explained above, the target of $HH^G(-)$ can equivalently be taken to be $E$-equivariant quasi-coherent sheaves on the moduli stack  $\operatorname{Bun}^{\operatorname{ss,0}}_{G}(E)$ of semistable $G$-bundles of degree $0$ on $E$. Note that the ``taking supports'' map $$\hh^{GL_r}(*) \simeq \operatorname{Bun}_{r}^{\textrm{ss},0}(E) \to E^{(r)}$$ exhibits the symmetric product $E^{(r)}$ as a good moduli space. That equivariant elliptic cohomology should live on the symmetric product $E^{(r)}$ is compatible with the axiomatics recorded in \cite{GKV}.

As an illustration, consider the quotient stack $\PP/\GM$ where $\GM$ acts with weight $d$. Consider the composite $$\pi:L_{E}(\PP/\GM)\rightarrow L_{E}(B\GM)\simeq E\times B\GM\rightarrow E.$$ It can be shown that $\pi_{*}\OO\simeq\mathcal{O}\oplus\vartheta^{\otimes -d^{2}}$, where $\vartheta:=\mathcal{O}(e)$. (Note that the projection $E\times B\GM\rightarrow E$ is a good moduli space.)
The reader familiar with torus-equivariant elliptic cohomology will recognize this computation for $\mathbf{P}^{1}$. That is, the Hodge version of equivariant elliptic cohomology is already the correct answer in this case. Informally speaking, this is due to the fact that Hodge to de Rham degenerates in this case, so there is no need to pass to Tate constructions.

\subsection{Other perspectives on elliptic cohomology}
\label{ssec:other}

The primary motivation for our work is geometric representation theory, where it is often of great interest to study elliptic counterparts of various algebraic structures constructed using cohomology and K-theory. We refer for example to the theories of elliptic quantum groups, elliptic cohomological Hall algebras, and elliptic stable envelopes (see e.g. \cite{GKV,AO,YangZhao1,YangZhao2} and \cite[\S 5.2]{Okounkov} for an overview). These constructions are typically based on a set of axioms sketched by Ginzburg--Kapranov--Vasserot \cite{GKV} as desiderata for equivariant elliptic cohomology. The present paper is the first step in our approach to a canonical construction satisfying these axioms.

There are several existing approaches to equivariant elliptic cohomology. The first construction, due to Grojnowski (see \cite{Gr}), works with complex coefficients and is complex-analytic in nature. He defines a $G$-equivariant theory for $G=T$ a torus, and for general $G$ by taking Weyl invariants of the $T$-equivariant theory for $T$ a maximal torus. However, for $G=e$ trivial, the theory degenerates to ordinary cohomology by definition, rather than the non-equivariant elliptic cohomology theory of Landweber \cite{Landweber} defined using techniques of stable homotopy theory and homotopical algebra.

Another perspective is taken in the work of J.~Lurie \cite{LurieICM}, and developed further by Gepner--Meier \cite{GepMei} in order to define an equivariant elliptic cohomology theory for any (oriented) elliptic curve $E$ over an $E_\infty$-ring spectrum and any compact Lie group $G$. However, as in Grojnowski's work, the case of general $G$ is defined by formal extending the theory from the abelian case.

In order to explain the expected connection with \cite{GepMei}, let us first point out that all our constructions and main results go through for an elliptic curve not just over a characteristic zero\footnote{The characteristic zero hypothesis is only used in a few places, e.g. to guarantee the linear reductivity of general linear groups $GL_r$.} field $k$, but for any connective $E_\infty$-ring spectrum (e.g. the sphere spectrum).\footnote{We do not pursue it in this text, due to the additional complexity of working in the context of spectral algebraic geometry. Since our motivation comes from geometric representation theory rather than homotopy theory, we prefer not to develop this connection explicitly here.} In particular, the constructions of this paper (and the sequel) will produce for any spectral elliptic curve $E$ a canonical $G$-equivariant elliptic cohomology theory in Lurie's sense, directly and uniformly for any $G$, and purely in algebro-geometric terms.

Lastly we must mention the beautiful work of Sibilla and Tomasini in \cite{ST}, who proposed a new approach to torus-equivariant elliptic cohomology. Their work is based on an open substack of ``quasi-constant'' maps inside $\Map(E,X/T)$, which they showed recovers Grojnowski's construction. Our work is directly motivated by trying to generalize this type of construction to a general $G$. For $G=T$ a torus, it is not difficult to show that points of our elliptic loop space $\hh(X/T)$ can be identified with quasi-constant maps. However, the quasi-constancy condition does not seem to be the relevant one for a general algebraic group $G$. As an illustration, let us remark that it is not the case that the stack of quasi-constant maps into $BG$ is covered by the stack of quasi-constant maps into $BB$, whereas this is the case for $L_{E}(BB)$ (see Lemma~\ref{springer}).

\subsection{Acknowledgments}

The authors would like to thank Andrei Okounkov for the suggestion to look into these questions, and for comments on a draft.
E.B. would like to thank Wille Liu, Andy Jiang, Ian Grojnowski and Arkadij Bojko for helpful conversations related to this material.
A.A.K. acknowledges support from the grants AS-CDA-112-M01 and NSTC 112-2628-M-001-0062030.

\section{Stacks of tensor functors}

\subsection{Recollections on derived geometry}We saw in the introduction that even in the simplest case of an affine scheme $X$, our construction of $\hh(X)$ is forced to pass through derived schemes. Ultimately this complexity bleeds into the entire theory, and we must work throughout in a derived context in order to have a clean formulation. Our references for this section are the books \cite{GR}, \cite{Lu} and \cite{HAG}.

Henceforth everything is derived and $\infty$-categorical by assumption, with the prefixes \emph{derived} and $\infty$-dropped. We will use \emph{homological} grading conventions throughout.

Let $k$ be a commutative ring containing $\mathbf{Q}$.\footnote{As mentioned in the introduction, this assumption can be dropped almost everywhere, with a few obvious exceptions (e.g. the computation of the rational loop space as the $(-1)$-shifted tangent). We can also work over an $E_\infty$-ring spectrum, using Lurie's notion of spectral elliptic curves, with obvious modifications that we leave to the interested reader.}
We let $\dAlg$ denote the category of commutative $k$-algebras. Since we are in characteristic zero, these can be modelled equivalently by connective commutative dg-algebras, simplicial commutative algebras, or connective $E_\infty$-algebras. A \emph{pre-stack} is by definition a functor: $$\YY: R \longmapsto \YY(R),$$ encoding for each $R \in \dAlg$ a homotopy type (or $\infty$-groupoid) $\YY(R)$ of $R$-points.
A \emph{stack} is a pre-stack satisfying descent for the \'{e}tale topology. The category of such is denoted $\mathbf{St}_{k}$. Representable objects are called \emph{affine schemes}.
Recall that there is a notion of (higher) \emph{Artin} stacks (sometimes called \emph{geometric} stacks), see e.g. \cite[Chap.~2, \S 4]{GR}.

For a commutative $k$-algebra $R$ we write $\Mod_R$ for the category of $R$-modules.\footnote{Note that our conventions are nonstandard here, in that objects of $\Mod_R$ are implicitly connective. The same goes for $\QC(\YY)$. We write $\DD(R)$ and $\DD(\YY)$ for the categories of nonconnective objects.} These can be modelled equivalently by connective dg-modules, simplicial modules, or connective module spectra. For a stack $\YY$, the category of quasi-coherent sheaves $\QC(\YY)$ is defined by $\QC(\spc(R)) = \Mod_R$ in the affine case, and in general by the limit \[\QC(\YY) = \lim_{R,y} \Mod_R \] over $R\in\dAlg$ and $R$-points $y \in \YY(R)$. The category $\QC(\YY)$ is presentable, and there is a unique symmetric monoidal structure on $\QC(\YY)$ which is colimit-preserving in each argument and for which the pull-backs $\QC(\YY) \to \Mod(R)$ are symmetric monoidal for every $R\in\dAlg$ and every morphism $\spc(R) \to \YY$.
The categories $\Mod_R$ and $\QC(\YY)$ are prestable in the sense of Lurie (see \cite[App.~C]{SAG}), and form the connective parts of $t$-structures on their stabilizations $\DD(R)$ and $\DD(\YY)$.

We write $\Mod^{\mathrm{perf}}_R \subset \Mod_R$ for the full subcategory of \emph{perfect} $R$-modules, generated by $R$ under finite colimits and extensions. We write $\Perf(\YY) \subset \QC(\YY)$ for the full subcategory of \emph{perfect complexes} on $\YY$, i.e. the limit of $\Mod_R^{\mathrm{perf}}$ over $R\in\dAlg$ and $y \in \YY(R)$.

When $\YY$ is Artin, $\QC(\YY)$ and $\Perf(\YY)$ can be described as the respective limits of $\Mod_R$ and $\Mod_R^{\mathrm{perf}}$ over pairs $(R,y)$ where $R\in\dAlg$ and $y : \spc(R) \to \YY$ is a \emph{smooth} morphism. It follows from \cite[Prop.~C.3.2.4]{SAG} that $\QC(\YY)$ is a Grothendieck prestable category in the sense of \cite[\S C.1.4]{SAG}.

Finally, for any stack $\YY$, there is an associated \emph{affine stack} $$\YY\rightarrow\operatorname{Aff}(\YY)$$ such that any morphism $\YY\rightarrow S$ with $S$ affine factors uniquely through $\operatorname{Aff}(\YY)$. For example, the affinization of an elliptic curve $E$ is non-canonically isomorphic to $B\mathbf{G}_{a}$, with an isomorphism corresponding to a trivialization of the one dimensional vector space $H^{1}(E,\mathcal{O})$. For a reference on affine stacks and affinization see \cite[\S 3]{BNloop}.

\subsection{Tensor categories} \label{ssec:tensorcat}
We introduce now the categorical backdrop for our main constructions. We refer again to \cite[\S C.1.4]{SAG} for background on Lurie's theory of Grothendieck prestable $\infty$-categories.

A \emph{tensor category} is a symmetric monoidal Grothendieck prestable $k$-linear category, where by convention the tensor product is assumed colimit-preserving in each argument. Given tensor categories $C$ and $D$, we denote by $\FUN(C,D)$ the category of \emph{tensor functors}, i.e. colimit-preserving symmetric monoidal $k$-linear functors, from $C$ to $D$. We denote by $\CCC$ the category of tensor categories and tensor functors. This is in other words the category of commutative algebra objects in Lurie's symmetric monoidal category of Grothendieck prestable $k$-linear categories, see \cite[\S C.4.2]{SAG}.

\begin{example} For an Artin stack $\YY$, $\QC(\YY)$ is a tensor category. Further, for a morphism $f:\YY\rightarrow\XX$ the induced functor $$f^{*}:\QC(\XX)\longrightarrow\QC(\YY)$$ is a tensor functor. \end{example}

\begin{example} Let us say a \emph{tensor $1$-category} is a Grothendieck abelian $1$-category equipped with a symmetric monoidal structure for which the tensor product commutes with colimits in each argument, and a \emph{tensor functor} between tensor $1$-categories is a colimit-preserving symmetric monoidal functor. Given a ``nice'' tensor $1$-category $A$\footnote{%
  More precisely, assume that $A$ admits a ``weakly flat descent structure'' in the sense of \cite[2.2, 3.1]{CisDeg}.
  For our purposes it is enough to consider the case of $A$ generated by compact projective objects, in which case $A$ is equivalent to the category of functors $(A_0)^{\mathrm{op}} \to \mathrm{Ab}$ that preserve finite products, where $A_0$ is the full subcategory of compact projectives (see e.g. \cite[\S 5.1]{CesScholze}).
}, the connective derived category $\DD_{\geq 0}(A)$ is a tensor category, and for any tensor functor $f : A \to B$, the left-derived functor $$Lf : \DD_{\geq 0}(A) \to \DD_{\geq 0}(B)$$ is a tensor functor. See \cite[Thm.~2.14, Prop.~3.2]{CisDeg} and \cite[Prop.~1.3.5.15, Cor.~1.3.4.26, Ex.~4.1.7.6]{Lu}. \end{example}

\subsection{Stacks of tensor functors} \label{ssec:stktenfun}
Recall that for two stacks $\XX$ and $\YY$, there is a \emph{mapping stack}
\[\Map(\XX,\YY) : R \mapsto \Map(\XX \otimes R, \YY),\]
where we write $\XX \otimes R := \XX \times \spc(R)$.
We introduce an analogue of this construction for tensor categories.

Given a tensor category $D$, we may consider the tensor category $$D\otimes R:=D\otimes\Mod_{R}$$ for every commutative algebra $R \in \dAlg$. Recall that this is defined by the Lurie tensor product, see \cite[\S C.4.2]{SAG}.

\begin{definition} Let $C$ and $D$ be tensor categories. The \emph{stack of tensor functors} from $C$ to $D$ is defined by $$\Fun(C,D) : R \mapsto \FUN(C,D\otimes R)^{\simeq}.$$ \end{definition}

\subsubsection{Tannakian stacks}

There is a canonical morphism of stacks $$\Map(\XX,\YY)\rightarrow\Fun(\QC(\YY),\QC(\XX)),$$ sending $f \mapsto f^*$. We call $\YY$ \emph{Tannakian} when this map is invertible for all stacks $\XX$ (or equivalently, for all affine schemes $\XX=S$).

For example, when $\YY$ has quasi-affine diagonal and has compactly generated derived category $\DD(\YY)$, it is Tannakian by \cite[Thm.~1.3]{BHL}.\footnote{Note that the compact generation is known for a large class of algebraic stacks by the work of many authors, see \cite[\S 1.8]{KhanKstack} for an overview.}

\subsubsection{The coevaluation functor}\label{sssec:coev}

Given tensor categories $C$ and $D$, consider the functor \[ \YY \mapsto \FUN(C, D \otimes \YY)^\simeq, \] where $D\otimes\YY:=D\otimes\QC(\YY)$. This satisfies étale descent on the category of stacks, since $\QC(-)$ does. In particular, for any stack $\YY$, we have
\[ \homs(\YY, \Fun(C,D)) \simeq \FUN(C, D \otimes \YY)^\simeq. \]
In particular, the identity $$\operatorname{id}:\Fun(C,D)\rightarrow\Fun(C,D)$$ classifies a universal functor $$\mathrm{coev}_{C}:C\rightarrow D \otimes \QC(\Fun(C,D)).$$

\subsubsection{Pull-backs of functor stacks}

\begin{lemma}\label{fibre} Given a diagram of tensor functors $$C_{0}\leftarrow C_{01}\rightarrow C_{1},$$ the induced square \[\begin{tikzcd}
\Fun(C_{0}\otimes_{C_{01}}C_{1},D)\arrow{r} \arrow[swap]{d} & \Fun(C_{0},D) \arrow{d} \\
\Fun(C_{1},D)\arrow{r} & \Fun(C_{01},D)
\end{tikzcd}\] is cartesian for every tensor category $D$. \end{lemma}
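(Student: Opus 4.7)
The plan is to reduce the claim to the universal property of the relative tensor product as a pushout of commutative algebras in the symmetric monoidal $\infty$-category of Grothendieck prestable $k$-linear categories. Concretely, by definition the category $\CCC$ of tensor categories is the $\infty$-category of commutative algebra objects in Lurie's symmetric monoidal $\infty$-category of such presentable categories (see \cite[\S C.4.2]{SAG}). For commutative algebras in any symmetric monoidal $\infty$-category, the relative tensor product under a common base computes the pushout in the category of algebras (cf.\ \cite[Thm.~3.4.1.1]{Lu}). Hence $C_{0}\otimes_{C_{01}}C_{1}$ is the pushout of the span $C_{0}\leftarrow C_{01}\rightarrow C_{1}$ in $\CCC$.

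Given this, I would first fix $R \in \dAlg$ and consider the tensor category $D\otimes R$. By the universal property of pushouts, the mapping space functor $\operatorname{Map}_{\CCC}(-, D\otimes R)$ carries the pushout to a pullback square of spaces, so that
\[ \operatorname{Map}_{\CCC}(C_{0}\otimes_{C_{01}}C_{1},\, D\otimes R)\simeq \operatorname{Map}_{\CCC}(C_{0},D\otimes R)\times_{\operatorname{Map}_{\CCC}(C_{01},D\otimes R)}\operatorname{Map}_{\CCC}(C_{1},D\otimes R). \]
Unwinding the definition of the stack of tensor functors from Section~\ref{ssec:stktenfun}, the left-hand side is $\Fun(C_{0}\otimes_{C_{01}}C_{1},D)(R)$ and the right-hand side is the $R$-points of the pullback stack appearing in the statement.

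Next, I would observe that pullbacks in the category $\Stk$ (and more generally in any functor $\infty$-category valued in spaces) are computed pointwise. Thus the pointwise pullback assembles into a cartesian square of stacks, concluding the argument.

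The only non-formal ingredient is the identification of $C_{0}\otimes_{C_{01}}C_{1}$ with the pushout in $\CCC$, i.e.\ the compatibility between the relative tensor product of commutative algebras (formed internally to Grothendieck prestable $k$-linear categories) and the coproduct in the category of such algebras under $C_{01}$. This is standard but is the only place where care is needed; everything else is formal manipulation of mapping spaces. Once this step is cited or recalled, the rest of the proof is essentially automatic.
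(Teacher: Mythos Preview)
Your proposal is correct and follows essentially the same approach as the paper: both identify $\CCC$ with commutative algebras in Grothendieck prestable $k$-linear categories, invoke the fact that the relative tensor product computes the pushout there (the paper cites \cite[Prop.~3.2.4.7]{Lu} rather than Thm.~3.4.1.1), and then deduce the cartesian square formally. Your write-up is simply more explicit about the pointwise-in-$R$ unwinding than the paper's terse ``the claim follows.''
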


\begin{proof} Note that $\CCC$ is the category of commutative algebra objects in the category of Grothendieck prestable categories and colimit-preserving functors.  By \cite[Prop.~3.2.4.7]{Lu}, the tensor product $C_{0}\otimes_{C_{01}}C_{1}$ is thus the coproduct in $\CCC$ of the diagram $C_{0}\leftarrow C_{01}\rightarrow C_{1}$. The claim follows. \end{proof}

\subsubsection{Tensor $1$-categories}

Recall from Subsect.~\ref{ssec:tensorcat} that for tensor $1$-categories $A$ and $B$, we have the category $$\FUN(A, B)$$ of tensor functors $A \to B$. For an ordinary commutative $k$-algebra $R$, we denote by $B \otimes R := B \otimes \Mod^\heartsuit_R$ the Lurie tensor product with the tensor $1$-category of ordinary $R$-modules; this is still a tensor $1$-category by \cite[Thm.~C.5.4.16]{SAG}. The classical stack of tensor functors is defined by
$$\Fun(A,B) : R \mapsto \FUN(A, B\otimes R)^\simeq.$$
We also have the classical stack $\Fun(\DD_{\geq 0}(A),\DD_{\geq 0}(B))^{\cl}$, the classical truncation of the stack of tensor functors $\DD_{\geq 0}(A) \to \DD_{\geq 0}(B)$.
There is a canonical morphism $$\Fun(A,B)\longrightarrow\Fun(\DD_{\geq 0}(A),\DD_{\geq 0}(B))^{\cl},$$ sending a tensor functor $f : A \to B$ to the induced left-derived functor $Lf : \DD_{\geq 0}(A) \to \DD_{\geq 0}(B)$ on connective derived categories.
More precisely, on $R$-points it sends a tensor functor $f : A \to B \otimes \Mod^\heartsuit_R$ to $Lf : \DD_{\geq 0}(A) \to \DD_{\geq 0}(B) \otimes \Mod_R$, for every commutative $k$-algebra $R$.

\begin{lemma}\label{abelian} Let $A$ be a tensor $1$-category which is generated under colimits by a set of objects which are compact, projective, and dualizable. Then the canonical morphism $$\Fun(A,B)\longrightarrow\Fun(\DD_{\geq 0}(A),\DD_{\geq 0}(B))^{\cl}$$ is invertible. \end{lemma}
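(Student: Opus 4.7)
My plan is to identify both sides with categories of symmetric monoidal additive functors out of a generating subcategory, and then to reduce the claim to the fact that dualizable objects of the target prestable tensor category lie in its heart. Concretely, I let $A_0 \subset A$ denote the full $k$-linear subcategory spanned by the compact, projective, dualizable objects; since each of these three properties is preserved by finite direct sums and by tensor products (the latter relying on flatness of dualizables), $A_0$ is a symmetric monoidal additive $k$-linear subcategory of $A$ that tensor-generates $A$ under colimits.

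I would then invoke two universal properties: $A$ is the free Grothendieck abelian tensor 1-category on $A_0$, while $\DD_{\geq 0}(A) \simeq \operatorname{Fun}^{\prod}(A_0^{\mathrm{op}}, \Mod_k^{\geq 0})$ equipped with Day convolution is the free Grothendieck prestable tensor category on $A_0$, cf.\ \cite[App.~C]{SAG} and \cite[\S 1.3]{Lu}. Consequently,
$$\FUN(A, B\otimes\Mod^\heartsuit_R) \simeq \operatorname{Fun}^{\otimes,\mathrm{add}}(A_0, B\otimes\Mod^\heartsuit_R),$$
$$\FUN(\DD_{\geq 0}(A), \DD_{\geq 0}(B)\otimes\Mod_R) \simeq \operatorname{Fun}^{\otimes,\mathrm{add}}(A_0, \DD_{\geq 0}(B)\otimes\Mod_R),$$
and under these identifications the comparison morphism is simply post-composition with the heart inclusion $B\otimes\Mod^\heartsuit_R \hookrightarrow \DD_{\geq 0}(B)\otimes\Mod_R$. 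This inclusion is fully faithful, so the induced map on functor categories is automatically fully faithful. For essential surjectivity I observe that any symmetric monoidal additive functor $F\colon A_0 \to \DD_{\geq 0}(B)\otimes\Mod_R$ sends every object (being dualizable) to a dualizable object, so the problem reduces to showing that dualizable objects of $\DD_{\geq 0}(B)\otimes\Mod_R \simeq \DD_{\geq 0}(B\otimes\Mod^\heartsuit_R)$ (using \cite[Thm.~C.5.4.16]{SAG}) lie in the heart.

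The hard part will be this last assertion: given a dualizable $M \in \DD_{\geq 0}(B')$ (with $B':=B\otimes\Mod^\heartsuit_R$) with connective dual $N$, one must show $M$ is concentrated in degree zero. The argument I would use proceeds from the identification $\operatorname{Map}_{\DD_{\geq 0}(B')}(M,-) \simeq N\otimes(-)$ of endofunctors; connectivity of $N$ then forces $\operatorname{Ext}^{>0}_{B'}(M,-) = 0$ on the heart, and a careful analysis of the triangle identities combined with right $t$-exactness of the tensor product on $\DD_{\geq 0}(B')$ pins $M$ down to $B'$, paralleling the classical statement that connective dualizable modules over a discrete ring are finitely generated projective. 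Steps one and two are then purely formal bookkeeping with universal properties, and the lemma follows.
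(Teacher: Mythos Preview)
Your approach is essentially the same as the paper's: both reduce to a subcategory $A_0$ of compact projective dualizable generators, use that $A$ (resp.\ $\DD_{\geq 0}(A)$) is freely generated by $A_0$ under sifted colimits in the $1$-categorical (resp.\ $\infty$-categorical) sense, and then argue that any tensor functor $F\colon \DD_{\geq 0}(A) \to \DD_{\geq 0}(B)\otimes\Mod_R$ must send $A_0$ into the heart because $F$ preserves dualizables.

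Where you diverge is in the ``hard part'', which the paper handles more simply than what you sketch. If $M$ is dualizable in a Grothendieck prestable tensor category with discrete unit, then $M\otimes(-)$ is right adjoint to $M^\vee\otimes(-)$, hence left-exact; in particular it preserves discrete objects, so $M\simeq M\otimes\mathbf{1}$ is discrete. Your proposed route via $\operatorname{Map}_{\DD_{\geq 0}(B')}(M,-)\simeq N\otimes(-)$ is imprecise as stated (that identification holds for the \emph{internal} hom, not the mapping space), and the subsequent Ext-vanishing and triangle-identity analysis is unnecessary once one has the left-exactness observation. Otherwise your outline is correct.
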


\begin{proof} The claim is that for every ordinary commutative $k$-algebra $R$, the functor
\[ \FUN(A, B \otimes \Mod^\heartsuit_R)^\simeq \to \FUN(\DD_{\geq 0}(A), \DD_{\geq 0}(B) \otimes \Mod_R)^\simeq \]
is an equivalence. We will show that, under the given assumptions, it is inverse to the functor sending a tensor functor $F$ to the tensor functor $X \mapsto \operatorname{H}_0(F(X))$.

Denote by $A_0 \subset A$ the full subcategory spanned by finite coproducts of the given compact projective dualizable generators.
The assumptions imply that $A$ is freely generated by $A_0$ under sifted colimits, in the $1$-categorical sense, while the same holds for $\DD_{\geq 0}(A)$ but in the $\infty$-categorical sense (see e.g. \cite[\S 5.1]{CesScholze}). Let $F : \DD_{\geq 0}(A) \to \DD_{\geq 0}(B) \otimes \Mod_R$ be a tensor functor. We claim that its restriction $F|_{A_0}$ takes values in $B \otimes \Mod^\heartsuit_R$ (which is the full subcategory of discrete objects in $\DD_{\geq 0}(B) \otimes \Mod_R$, see the proof of \cite[Thm.~C.5.4.16]{SAG}). If $X$ is a dualizable object of $A$, then the functor $X \otimes (-)$ is right adjoint to $X^\vee \otimes (-)$ and is in particular left-exact, so that $X$ is flat. In particular, $X \otimes^L (-) \simeq X \otimes (-)$ so that $X$ remains dualizable in $\DD_{\geq 0}(A)$. Since $F : \DD_{\geq 0}(A) \to \DD_{\geq 0}(B) \otimes \Mod_R$ was symmetric monoidal, it follows that $F(X)$ is also dualizable and hence flat by the same argument as above. In particular, $F(X)$ must be discrete. Thus, $F$ restricts to the functor 
\[ F : A_0 \to B \otimes \Mod^\heartsuit_R. \]
We let $f : A \to B \otimes \Mod^\heartsuit_R$ be its unique colimit-preserving extension to $A$. The assignment $F \mapsto f$ is clearly natural in $F$, inverse to $f \mapsto Lf$, and satisfies $\operatorname{H}_0 (F(X)) \simeq \operatorname{H}_0 (Lf(X)) \simeq f(X)$ for every $X$ in $A$. The claim follows. \end{proof}

\subsubsection{Tensor localizations} We are interested in computing $\Fun(C,D)$ \emph{locally} on $C$. We formalize this as follows:

\begin{definition} If $C$ and $C'$ are tensor categories and $$j^{*}:C\rightarrow C'$$ is a tensor functor admitting a fully faithful right adjoint $j_{*}$, we call $j^{*}$ a \emph{tensor localization}.\footnote{The adjoint $j_{*}$ is of course not assumed to be symmetric monoidal.}\end{definition}

\begin{definition}\label{cover} Let $C$ be a tensor category. A (finite) \emph{cover} of $C$ is a (finite) jointly conservative family of tensor localizations $$\{j^{*}_{\alpha}:C\rightarrow C_{\alpha}\}_{\alpha\in A}.$$ \end{definition}

\begin{lemma}\label{Cart} Let $j^{*}:C\rightarrow C'$ be a tensor localization. Then for any fully faithful tensor functor $\iota:D\rightarrow D'$, the commutative square of stacks \[\begin{tikzcd}
\Fun(C',D)\arrow{r} \arrow[swap]{d} & \Fun(C',D') \arrow{d} \\
\Fun(C,D)\arrow{r} & \Fun(C,D')
\end{tikzcd}\] is cartesian. \end{lemma}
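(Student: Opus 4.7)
The plan is to test cartesianness on $R$-points for each $R \in \dAlg$: by the definition of $\Fun(-,-)$, this amounts to showing the square of $\infty$-groupoids
\[
\begin{tikzcd}
\FUN(C', D \otimes R)^\simeq \arrow{r} \arrow{d} & \FUN(C', D' \otimes R)^\simeq \arrow{d} \\
\FUN(C, D \otimes R)^\simeq \arrow{r} & \FUN(C, D' \otimes R)^\simeq
\end{tikzcd}
\]
is cartesian. My strategy is to identify all four corners as full subgroupoids of the lower-right $\FUN(C, D' \otimes R)^\simeq$ and then compare essential images.

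First, I would argue that precomposition with $j^*$ gives a full subgroupoid inclusion $\FUN(C', E)^\simeq \hookrightarrow \FUN(C, E)^\simeq$ for every tensor category $E$. This is the universal property of a tensor localization: since $j^*$ admits a fully faithful right adjoint $j_*$, it exhibits $C'$ as a symmetric monoidal Bousfield localization of $C$, so tensor functors out of $C'$ correspond to tensor functors out of $C$ that invert $j^*$-equivalences. Next, postcomposition with $\iota \otimes \id_R$ should also be a full subgroupoid inclusion, with essential image those tensor functors landing in $D \otimes R$. This uses that $\iota \otimes \id_R : D \otimes R \to D' \otimes R$ is itself fully faithful: the right adjoint $\iota^R$ of $\iota$ lifts to a right adjoint of $\iota \otimes \id_R$, and $\iota^R \iota \simeq \id_D$ then yields $(\iota^R \otimes \id_R)(\iota \otimes \id_R) \simeq \id_{D \otimes R}$.

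Granting these two facts, a point of the pullback is a tensor functor $F : C \to D' \otimes R$ such that (a) $F \simeq G \circ j^*$ for some $G : C' \to D' \otimes R$ and (b) the essential image of $F$ lies in the full subcategory $D \otimes R$. Since $j^* j_* \simeq \id_{C'}$ makes $j^*$ essentially surjective, the essential images of $G$ and $G \circ j^* \simeq F$ coincide, so (b) forces $G$ itself to land in $D \otimes R$. By the full faithfulness of $D \otimes R \hookrightarrow D' \otimes R$, such $G$ lifts uniquely to a tensor functor $C' \to D \otimes R$, exhibiting $F$ as an object of the top-left corner and establishing cartesianness.

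The main obstacle I anticipate is the full faithfulness of $\iota \otimes \id_R$, which is geometrically intuitive but requires some care in the Lurie tensor product setting since $\iota^R$ is not a priori colimit-preserving; the compatibility of base change by $\Mod_R$ with passage to right adjoints has to be justified, either by hand or by invoking a general stability property of the Lurie tensor product on Grothendieck prestable $k$-linear categories. Everything else in the argument is formal bookkeeping with essential surjectivity and the universal property of symmetric monoidal localization.
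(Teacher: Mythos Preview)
Your argument is correct and close in spirit to the paper's, but the packaging is genuinely different. The paper also reduces to $R$-points and uses full faithfulness of (the base-changed) $\iota$ to see that the horizontal arrows are monomorphisms of groupoids, but from there it proceeds \emph{explicitly}: given a compatible pair $(\phi,\psi)$ with $\iota\phi \simeq \psi j^{*}$, it sets $\chi := \phi j_{*}$ and verifies by hand that $\iota\chi \simeq \psi$ (via the counit $j^{*}j_{*}\simeq\id$) and $\chi j^{*}\simeq\phi$ (checked after postcomposing with the fully faithful $\iota$), observing along the way that any such $\chi$ is automatically a tensor functor because $\iota\chi$ is. You instead invoke the universal property of a symmetric monoidal Bousfield localization to get $(-\circ j^{*})$ fully faithful once and for all, then recognize all four corners as full subgroupoids of $\FUN(C,D'\otimes R)^{\simeq}$ and compare essential images. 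Your route is cleaner and avoids the hand verification of monoidality of $\chi$; the paper's route has the virtue of not appealing to the localization universal property as a black box. On the point you flag: the paper silently makes the same replacement $D\rightsquigarrow D\otimes R$, $D'\rightsquigarrow D'\otimes R$ and thereafter uses that the resulting $\iota$ is fully faithful, so this is a shared assumption rather than a gap in your approach relative to theirs; it is justified by identifying $D\otimes\Mod_{R}\simeq\Mod_{R}(D)$ and noting that a fully faithful colimit-preserving $k$-linear $\iota$ induces a fully faithful functor on $R$-module objects.
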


\begin{proof} Replacing $D$ and $D'$ by $D \otimes R$ and $D' \otimes R$ for all $R\in\dAlg$, it will suffice to show that the square of groupoids \[\begin{tikzcd}
\FUN(C',D)^\simeq\arrow{r} \arrow[swap]{d} & \FUN(C',D')^\simeq \arrow{d} \\
\FUN(C,D)^\simeq\arrow{r} & \FUN(C,D')^\simeq
\end{tikzcd}\] is cartesian. Since $\iota$ is fully faithful, the horizontal arrows are injective on $\pi_0$ and bijective on all higher homotopy groups. It will thus suffice to show that the functor \[ \FUN(C',D)^\simeq \to \FUN(C,D)^\simeq \times_{\FUN(C,D')^\simeq} \FUN(C',D')^\simeq \] is essentially surjective and bijective on connected components of mapping groupoids. This amounts to the assertion that for any commutative square of solid arrows \[\begin{tikzcd}
C \arrow{r}{j^{*}} \arrow[swap]{d}{\phi} & C' \arrow{d}{\psi}\arrow[dashed,swap]{ld}{\chi} \\
D\arrow{r}{\iota} & D',
\end{tikzcd}\]
the dashed arrow can be filled in by an essentially unique tensor functor $\chi:C'\rightarrow D$. In fact, any such $\chi$ will automatically be a tensor functor: since $\iota$ is fully faithful, colimit-preserving and symmetric monoidal, this follows from the fact that $\iota \chi \simeq \psi j^*$ is a tensor functor.

First note that if $\chi$ is such a filling, then the commutativity $\chi j^* \simeq \phi$ and the adjunction counit provide a canonical isomorphism $\phi j_* \simeq \chi j^* j_* \simeq \chi$. It will thus suffice to show that $\chi := \phi j_{*} : C' \to D$ indeed fills the above diagram. The counit of the adjunction provides a natural transformation $$\iota\chi = \iota \phi j_{*} \simeq \psi j^{*} j_{*} \xrightarrow{\sim} \psi$$ is invertible because $j_{*}$ is fully faithful. Similarly, we claim that the natural transformation
\[ \phi \to \phi j_{*} j^* = \chi j^* \]
provided by the unit is also invertible. This may be checked after applying $\iota$ on the left: since $\iota$ is fully faithful and colimit-preserving, it admits a right adjoint $\iota^R$ with $\iota^R \iota \simeq \id$. Under the identification $\iota \phi \simeq \psi j^*$, $\iota \phi \to \iota \phi j_{*} j^*$ is inverse to the counit isomorphism $\psi j^* j_{*} j^* \to \psi j^*$. \end{proof}

\subsubsection{Integral domains}

We will now introduce a very special class of tensor categories.
As we will see, these behave like ``points'' in the world of tensor categories.

\begin{definition} We say that the tensor category $D$ is an \emph{integral domain} if for non-zero objects $d_{0},d_{1}$ it holds that $d_{0}\otimes d_{1}$ is also non-zero. \end{definition}

\begin{remark} $\Mod_{R}$ is an integral domain iff $R$ is a field. If $\YY$ is a reduced stack whose underlying topological space is a point, then $\QC(\YY)$ is an integral domain. Indeed, there exists a flat surjection $\pi:\spc(\kappa)\rightarrow\YY$ for some field $\kappa$ (see \cite[Tag~06MN]{StacksProject}), and $\pi^{*}$ is a conservative tensor functor to the integral domain $\Mod_{\kappa}$. \end{remark}

\begin{lemma}\label{local} If $D$ is an integral domain, and we are given a finite cover $\{j^{*}_{\alpha}:C\rightarrow C_{\alpha}\}$ of $C$, then any tensor functor $F : C \to D$ necessarily factors through one of the $C_{\alpha}$. Equivalently, there exists an $\alpha$ such that $F_\alpha := F j_{\alpha,*} : C_\alpha \to D$ is a tensor functor and the natural transformation $F \to F j_{\alpha,*}j_{\alpha}^* =F_\alpha j_{\alpha}^*$ is invertible. \end{lemma}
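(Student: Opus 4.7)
The plan is to identify, for each $\alpha$, a ``closed complement'' to the localization $j^{*}_\alpha : C \to C_\alpha$ inside the stable category $\DD(C)$, form the tensor product of these complements across the finite index set, and use joint conservativity together with the integral domain hypothesis on $D$ to force one of the complements to be sent to zero by $F$.

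Concretely, set $U_\alpha := j_{\alpha,*}(\mathbf{1}_{C_\alpha})$, with adjunction unit $u_\alpha : \mathbf{1}_C \to U_\alpha$. Full faithfulness of $j_{\alpha,*}$ combined with the triangle identity gives $j^{*}_\alpha(u_\alpha) \simeq \id$. I would then pass to the stable presentable $k$-linear categories $\DD(C)$, $\DD(C_\alpha)$, $\DD(D)$, to which the cocontinuous tensor functors $j^{*}_\alpha$, $j_{\alpha,*}$, and $F$ extend canonically, and define $K_\alpha := \operatorname{fib}(u_\alpha) \in \DD(C)$. Exactness of $j^{*}_\alpha$ immediately gives $j^{*}_\alpha K_\alpha \simeq 0$.

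Next, form $K := \bigotimes_{\alpha \in A} K_\alpha \in \DD(C)$. For each $\beta$, monoidality of $j^{*}_\beta$ expresses $j^{*}_\beta K$ as a tensor product containing the vanishing factor $j^{*}_\beta K_\beta$, so $j^{*}_\beta K \simeq 0$. Each $K_\alpha$ is bounded, being concentrated in homological degrees $\{-1,0\}$ (the cokernel and kernel of $u_\alpha$ in $C^{\heartsuit}$), so $K$ is bounded as well; joint conservativity in $C$ propagates to bounded objects of $\DD(C)$ via the $t$-structure and forces $K \simeq 0$. Applying $F$ yields $\bigotimes_\alpha F(K_\alpha) \simeq 0$ in $\DD(D)$, and the integral domain property of $D$ extends to bounded objects of $\DD(D)$ by a Künneth argument on lowest nonvanishing homotopy groups. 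Hence $F(K_\alpha) \simeq 0$ for some $\alpha$, which is equivalent to $F(u_\alpha) : F(\mathbf{1}) \to F(U_\alpha)$ being an isomorphism.

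To promote this to the desired factorization, invoke the projection formula $c \otimes U_\alpha \simeq j_{\alpha,*} j^{*}_\alpha c$, under which the adjunction unit $c \to j_{\alpha,*} j^{*}_\alpha c$ is identified with $c \otimes u_\alpha$. Applying $F$ gives $F(c) \otimes F(u_\alpha)$, which is an isomorphism. Hence the natural transformation $F \Rightarrow F \circ j_{\alpha,*} \circ j^{*}_\alpha$ is invertible, producing both the factorization $F \simeq F_\alpha \circ j^{*}_\alpha$ and the tensor compatibility of $F_\alpha := F \circ j_{\alpha,*}$ (every $a \in C_\alpha$ is canonically $j^{*}_\alpha j_{\alpha,*} a$, so tensor compatibility for $F_\alpha$ descends from that of $F$).

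The main obstacle is twofold: first, the $t$-structure bookkeeping needed to propagate joint conservativity from $C$ to $\DD(C)$ and the integral domain hypothesis from $D$ to $\DD(D)$ in the bounded range; and second, justifying the projection formula for tensor localizations with fully faithful right adjoint, which amounts to the statement that such localizations are smashing (given by tensoring with the idempotent algebra $U_\alpha$), and must be verified within the prestable framework of the paper.
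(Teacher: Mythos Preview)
Your overall strategy---tensor together ``local complements'' and invoke the integral domain hypothesis---is the paper's, but you have introduced two complications that the paper sidesteps, one of which is a genuine gap.

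First, the claim that $K_\alpha := \operatorname{fib}(u_\alpha)$ is concentrated in homological degrees $\{-1,0\}$ is incorrect: $u_\alpha$ is a morphism in the prestable category $C$, not in its heart, so its source and target can carry homotopy in all nonnegative degrees and the fiber is merely $(-1)$-connective. This is easily repaired by working with the cofibers $\operatorname{cofib}(u_\alpha) \in C$ instead; their tensor product then lies in $C$, is killed by each $j_\beta^*$, and hence vanishes by the given joint conservativity \emph{in $C$}---no excursion into $\DD(C)$ or K\"unneth argument in $\DD(D)$ is required.

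Second, and more seriously, to upgrade ``$F(u_\alpha)$ invertible'' to ``$F \Rightarrow F j_{\alpha,*} j_\alpha^*$ invertible'' you invoke the projection formula $c \otimes U_\alpha \simeq j_{\alpha,*} j_\alpha^* c$, i.e., that the localization is \emph{smashing}. This does not follow from the paper's definition of tensor localization: full faithfulness of $j_{\alpha,*}$ alone does not force it to preserve colimits. The paper avoids this entirely. Rather than the single object $\operatorname{cofib}(u_\alpha)$, it chooses for each $\alpha$ an \emph{arbitrary} $d_\alpha \in \ker(j_\alpha^*) \subset C$ with $F(d_\alpha) \neq 0$ (assuming for contradiction that no factorization exists), tensors these, and contradicts the integral domain hypothesis directly in $D$. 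This yields $F|_{\ker(j_\alpha^*)} = 0$ for some $\alpha$, which is exactly the factorization through $j_\alpha^*$; no projection formula needed. Monoidality of $F_\alpha = F j_{\alpha,*}$ is then checked by observing that the cofibers of the lax-monoidal structure maps of $j_{\alpha,*}$ lie in $\ker(j_\alpha^*)$ and are therefore killed by $F$.
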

\begin{proof} Since $j_\alpha^*$ is a tensor localization, such a factorization $$F_\alpha : C_\alpha \to D$$ exists (as a tensor functor) if and only if $F_{\alpha} \simeq F j_{\alpha,*}$. We first show the existence of $F_\alpha$ as a colimit-preserving functor, and then check that it is endowed with a symmetric monoidal structure. Let $K_{\alpha}$ denote the kernel of the localization $j_{\alpha}^*$. Assuming that $F$ did not factor through any $j_{\alpha}^*$ we could then pick $d_{\alpha}$ in $K_{\alpha}$ so that $F(d_{\alpha})$ is non-zero. However $\bigotimes_{\alpha\in A}d_{\alpha}\simeq 0,$ as it vanishes upon localization at each fixed $\alpha$ and the cover is by definition jointly conservative. It follows since $F$ is symmetric monoidal that $$\bigotimes_{\alpha\in A}F(d_{\alpha}) \simeq F\big(\bigotimes_{\alpha\in A}d_{\alpha}\big) \simeq 0.$$  We obtain thus a contradiction as $D$ is an integral domain. 

It remains to show that the induced functor $F_{\alpha} : C_{\alpha}\rightarrow D$ is symmetric monoidal. As the right adjoint of a symmetric monoidal functor, $j_{\alpha,*}$ is automatically lax symmetric monoidal and so we have natural maps $\mathbf{1}_{C} \to j_{\alpha,*}(\mathbf{1}_{C_\alpha})$ and $$j_{\alpha,*}(x)\otimes j_{\alpha,*}(y)\rightarrow j_{\alpha,*}(x\otimes y)$$ for objects $x$ and $y$. Since $F$ is symmetric monoidal, it will suffice to show that these maps become invertible after applying $F$.  Note that the cone of any of these maps automatically lies in $K_{\alpha}=\operatorname{ker}(j_{\alpha}^{*})$, as $j_{\alpha}^{*}$ is monoidal and $j_{\alpha}^{*}j_{\alpha,*}\simeq\id$. In particular, it vanishes after applying $F \simeq F_\alpha j_\alpha^*$. The claim follows. \end{proof}

\begin{remark} Let $C$ be an integral domain. Applying Lemma~\ref{local} to the identity functor, we see that for any finite cover $\{j^{*}_{\alpha}:C\rightarrow C_{\alpha}\}$, there exists an $\alpha$ such that $\id \to j_{\alpha,*}j_\alpha^*$ is invertible. But this implies that $j_\alpha^* : C \to C_\alpha$ is an equivalence. In particular, integral domains admit no nontrivial finite covers. More strongly, they admit no non-trivial tensor localizations. Equivalently, if $C$ is an integral domain then $C$ contains no non-trivial $\otimes$-idempotent algebras. In particular the locale formed by idempotent algebras (\emph{cf} \cite{BKS}) is trivial. \end{remark}

\section{The tensor category $\HE$} The key player in this note is a tensor category $\HE$ associated to an elliptic curve $E$ over $k=\mathbf{C}$. In this section we introduce it and discuss some of its basic properties. In fact, the construction of $\HE$ makes sense for general abelian group scheme $A$ in place of $E$, or a relative abelian group scheme $E \to S$.

\subsection{Sheaves with $0$-dimensional support}

We begin with some generalities. Let $\pi : X \to S$ be a schematic morphism of stacks.

\begin{definition}\label{perf0} We denote by $\Perf_{0}(X/S)$ the full subcategory of $\Perf(X)$ spanned by perfect complexes $F$ satisfying the following condition: \begin{enumerate} \item[$(\ast)$] For every field $\kappa$, every $s \in S(\kappa)$, and every affine open $U \subset X_s := X \times_S \{\kappa\}$, the complex of global sections $\Gamma(U, F|_{X_s})$ is a perfect $\kappa$-module. \end{enumerate} We write $\QC_{0}(X/S)$ for the category of ind-objects in $\Perf_{0}(X)$. \end{definition}

\begin{remark} If $\pi : X \to S$ is affine, then a perfect complex $F \in \Perf(X)$ belongs to $\Perf_0(X/S)$ iff $\pi_*(F)$ is perfect. The key observation is that when $S=\spc(\kappa)$, $F \in \Perf(X)$ having $\pi_*(F) \simeq \Gamma(X, F)$ perfect implies that $\Gamma(U, F|_U)$ is also perfect for every standard open $U = \spc(R_f)$ of $X = \spc(R)$. Indeed, since restriction to opens is exact, and over a field every bounded coherent complex is perfect, we may reduce to the case where $R$ is an ordinary algebra and $F$ lies in the heart, in which case the claim follows from \cite[Tag~067E]{StacksProject}. \end{remark}

\begin{example} When $S=\spc(k)$, we write simply $\Perf_{0}(X)$ and $\QC_{0}(X)$. Note that $\QC_0(X)$ is the full subcategory of quasi-coherent sheaves on $X$ whose cohomology sheaves all have $0$-dimensional (or empty) support. \end{example}

\begin{remark} Let $F \in \Perf(X)$ be a perfect complex whose cohomology sheaves have support of fibre dimension $\le 0$ over $S$. Then $F$ lies in $\Perf_0(X/S)$. \end{remark}

\begin{lemma} Let $X$ be a scheme. For every stack $\YY$, there is a natural equivalence $$\Perf_{0}(X)\otimes\Perf(\YY)\rightarrow\Perf_{0}(X\times \YY/\YY).$$ \end{lemma}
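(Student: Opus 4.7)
The plan is to construct the natural comparison map via the external tensor product and prove it is an equivalence by reducing to the affine case, where the statement becomes a matter of base change for perfectness.

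First, I would construct the functor. By the universal property of the Lurie tensor product, it suffices to produce a $k$-bilinear functor $\Perf_0(X) \times \Perf(\YY) \to \Perf_0(X \times \YY/\YY)$. The external tensor product $(F, G) \mapsto \pi_X^* F \otimes \pi_\YY^* G$ gives a bilinear functor with values in $\Perf(X \times \YY)$. I would check it lands in $\Perf_0(X \times \YY/\YY)$ by a direct fiberwise check: at a field-valued point $y : \spc(\kappa) \to \YY$, the pullback $y^* G$ is a finite direct sum of shifts of $\kappa$ (as any perfect $\kappa$-module is), so $(F \boxtimes G)|_{X_y} \simeq F|_{X_\kappa} \otimes_\kappa y^* G$ is a finite direct sum of shifts of $F|_{X_\kappa}$. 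Taking global sections over an affine open $U \subset X_\kappa$ gives a finite sum of shifts of $\Gamma(U, F|_{X_\kappa})$, perfect over $\kappa$ by the hypothesis $F \in \Perf_0(X)$. Universality produces the functor $\Phi : \Perf_0(X) \otimes \Perf(\YY) \to \Perf_0(X \times \YY/\YY)$.

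Next, I would reduce to $\YY$ affine by Zariski descent. The right-hand side is defined by a fiberwise condition over $\YY$ and is hence local on $\YY$; the left-hand side is a Zariski sheaf on $\YY$ because $\Perf(-)$ is, and the Lurie tensor product with a fixed factor preserves the relevant limits. An analogous descent argument on the scheme $X$ reduces further to $X = \spc(A)$ and $\YY = \spc(R)$ both affine.

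In the affine case, the claim becomes the assertion
\[ \Perf_0(A) \otimes_{\Perf_k} \Perf_R \xrightarrow{\sim} \Perf_0(A \otimes R/R). \]
By the remark preceding the lemma, for $\pi$ affine one has $\Perf_0(A) = \{M \in \Perf_A : \pi_{k,*} M \in \Perf_k\}$ and $\Perf_0(A \otimes R/R) = \{N \in \Perf_{A \otimes R} : \pi_{R,*} N \in \Perf_R\}$. Using the standard equivalence $\Perf_A \otimes_{\Perf_k} \Perf_R \simeq \Perf_{A \otimes R}$ together with the base-change identity $\pi_{R,*}(M \boxtimes N) \simeq \pi_{k,*}(M) \otimes_k N$, the image of $\Perf_0(A) \otimes_{\Perf_k} \Perf_R$ lies in $\Perf_0(A \otimes R/R)$, and fully-faithfulness follows from a K\"unneth-type computation of morphism complexes enabled precisely by the perfect-over-$k$ condition on the $\Perf_0(A)$-factor.

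The main obstacle is essential surjectivity in the affine case: one must show that every $N \in \Perf_0(A \otimes R/R)$ arises, up to finite colimits and retracts, from objects of the form $M \boxtimes P$ with $M \in \Perf_0(A)$ and $P \in \Perf_R$. This amounts to the claim that the characterization ``$\pi_{k,*}(-) \in \Perf_k$'' cutting out $\Perf_0(A) \subset \Perf_A$ base-changes along $(-)\otimes_{\Perf_k}\Perf_R$ to the characterization ``$\pi_{R,*}(-) \in \Perf_R$'' cutting out $\Perf_0(A \otimes R/R) \subset \Perf_{A \otimes R}$. The delicate ingredient is the correct interpretation of the Lurie tensor product of such full subcategories, using that the right adjoint $\pi_{k,*}$ has a well-behaved base change to $\pi_{R,*}$ and that perfectness of modules is preserved and detected under base change to $R$.
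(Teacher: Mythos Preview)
Your proposal has two genuine gaps that the paper's argument avoids.

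First, your reduction of $\YY$ to the affine case is not justified. You assert that ``the Lurie tensor product with a fixed factor preserves the relevant limits'', but this is exactly the nontrivial point: for small stable idempotent-complete categories, $C \otimes (-)$ is not known to commute with arbitrary limits unless $C$ is dualizable, and dualizability of $\Perf_0(X)$ is neither obvious nor established. Moreover, $\YY$ is an arbitrary stack, so ``Zariski descent on $\YY$'' is not even the right shape of argument. The paper sidesteps this entirely: it reduces \emph{only} $X$ to the affine case and keeps $\YY$ general throughout, using the standard equivalence $\Perf(X)\otimes\Perf(\YY)\simeq\Perf(X\times\YY)$ for $X$ affine, under which push-forward to $\YY$ corresponds to $\Gamma(X,-)\otimes\id$.

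Second, and more seriously, you correctly flag essential surjectivity as the ``main obstacle'' but do not actually prove it; the closing remarks about ``base change'' and ``the correct interpretation of the Lurie tensor product of such full subcategories'' are not an argument. The paper supplies the missing idea: an elementary lemma to the effect that if $V\in\Mod_k$ is \emph{not} perfect and $P_{\YY}\in\Perf(\YY)$, then $V\otimes P_{\YY}$ is perfect only if $P_{\YY}=0$. The proof is short: up to a shift, $k^{\oplus\infty}$ is a retract of $V$, hence $P_{\YY}^{\oplus\infty}$ is a retract of $V\otimes P_{\YY}$; if the latter were perfect (compact), the identity of $P_{\YY}^{\oplus\infty}=\operatorname{colim}_n P_{\YY}^{\oplus n}$ would factor through some finite stage, forcing $P_{\YY}=0$. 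Applied with $V=\Gamma(X,P_X)$, this says that on external tensors $P_X\boxtimes P_{\YY}$ the push-forward-perfect condition cuts out precisely those with $P_X\in\Perf_0(X)$ or $P_{\YY}=0$, which is what one needs to match the two subcategories. Your proposal never arrives at this lemma, and without it the argument does not close.
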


\begin{proof}
Pull-backs are easily seen to determine functors $$\Perf_{0}(X)\rightarrow\Perf_{0}(X\times\YY/\YY),\,\,\Perf(\YY)\rightarrow\Perf_{0}(X\times\YY)$$ and the universal property of the tensor product induces a morphism $$\Perf_{0}(X)\otimes\Perf(\YY)\rightarrow\Perf_{0}(X\times\YY).$$ To see that it is an equivalence it suffices to take $X=\spc(R)$ affine, as $\otimes$ commutes with colimits in its arguments.

 Recall that for $X$ affine the right hand side is characterized as those objects of $\Perf(X\times\YY)$ that remain perfect after push-forward to $\YY$. We have a standard equivalence $$\Perf(X\times\YY)\cong\Perf(X)\otimes\Perf(\YY)$$ with respect to which push-forward corresponds to the projection $$\Perf(X)\rightarrow\Mod_{k},$$ and we must show then that the same condition characterizes the left hand side. We can reduce to the consideration of objects $$P=P_{X}\boxtimes P_{\YY}$$  as these generate under finite colimits and retracts. Then by pushing forward along the projection $X\rightarrow *$ the result follows from the following simple observation: 
 if $V\in\Mod_{k}$ is \emph{not} perfect and $P_{\YY}\in\Perf(\YY)$, then $V\otimes P_{\YY}$ is perfect iff $P_{\YY}=0$. 
 Indeed, up to shifts we can assume that $k^{\infty}$ is a retract of $V$ and so $k^{\infty}\otimes P_{\YY}\simeq P_{\YY}^{\infty}$ is a retract of $V\otimes P_{\YY}$ and thus perfect. Now write $P_{\YY}^{\infty}=\operatorname{colim}_{n}P_{\YY}^{n}$ and note that the identity map of $P_{\YY}^{\infty}$ must factor through some $P_{\YY}^{n}$, from which it follows that $P_{\YY}=0.$ \end{proof}

Passing to ind-objects, we get:\footnote{When $\QC(\YY)$ is compactly generated by perfect complexes, we moreover have $\QC(\YY) \simeq \Ind\Perf(\YY).$}

\begin{corollary} Let $X$ be a scheme. For every stack $\YY$, there is a natural equivalence of categories $$\QC_{0}(X)\otimes\Ind\Perf(\YY)\rightarrow\QC_{0}(X\times \YY/\YY).$$ \end{corollary}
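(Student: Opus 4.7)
The plan is to deduce the corollary from the preceding lemma by passing to ind-completions. The lemma supplies a natural equivalence
\[ \Perf_{0}(X)\otimes\Perf(\YY) \xrightarrow{\ \sim\ } \Perf_{0}(X\times \YY/\YY) \]
of small idempotent-complete stable $k$-linear categories, and the categories on either side of the corollary are by definition the ind-completions of those on either side of this equivalence (for $\Ind\Perf(\YY)$ this is tautological, and for $\QC_{0}(X)$ and $\QC_{0}(X\times\YY/\YY)$ this is Definition~\ref{perf0}).

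First I would apply the $\Ind$ functor to the lemma's equivalence and invoke the standard compatibility between the small tensor product and ind-completion: for small idempotent-complete stable $k$-linear categories $C$ and $D$, there is a natural equivalence $\Ind(C \otimes D)\simeq \Ind(C)\otimes\Ind(D)$ in presentable $k$-linear categories under the Lurie tensor product. This is a direct consequence of the universal property of $\Ind$ (it is left adjoint to the inclusion of presentable categories into small idempotent-complete categories, and is symmetric monoidal for the Lurie tensor product); see \cite[\S 4.8.1]{Lu}. Substituting $C=\Perf_{0}(X)$ and $D=\Perf(\YY)$ then identifies the left-hand side of the corollary with $\Ind\Perf_{0}(X\times\YY/\YY)=\QC_{0}(X\times\YY/\YY)$.

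Naturality in $\YY$ is automatic since each step in the construction (pull-back, the lemma's equivalence, and the ind-completion) is natural. The main point where care is needed is simply in matching the various tensor products: the one appearing in the lemma is the tensor product of small stable $k$-linear categories, while the one in the corollary is the Lurie tensor product of presentable $k$-linear categories, and the translation between them is exactly the content of the $\Ind$-completion compatibility cited above. Once that identification is in hand the corollary is immediate, so there is no real obstacle beyond bookkeeping.
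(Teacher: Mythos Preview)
Your proposal is correct and follows exactly the paper's approach: the paper simply writes ``Passing to ind-objects, we get:'' before stating the corollary, and your argument spells out precisely what this means, namely applying $\Ind$ to the lemma's equivalence and using the symmetric monoidality $\Ind(C\otimes D)\simeq\Ind(C)\otimes\Ind(D)$.
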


\subsection{Convolution tensor categories}

Let $S$ be a stack and $A$ be a relative abelian group scheme over $S$. This means that $A$ is equipped with a multiplication map $m : A \times_S A \to A$, an inversion map $\iota : A \to A$, and a neutral element $e : S \to A$, all satisfying the axioms of an abelian group up to coherent homotopy. We assume that the projection $\pi : A \to S$ is schematic of finite type. We write $[d]:A\rightarrow A$ for scalar multiplication by $d\in\mathbf{Z}$, and $A[d]$ for the finite flat subgroup of $d$-torsion points.

\begin{definition} We define the convolution product on $\QC(A)$, denoted $\star$, by $$F\star G:=m_{*}(F \boxtimes G).$$\end{definition}

\begin{remark} The convolution product is commutative because the multiplication on $A$ is. The unit for $\star$ is the structure sheaf at the origin $\mathcal{O}_{e}$. That this defines a symmetric monoidal structure on $\QC(A)$ in the sense of $\infty$-category theory is explained in \cite[Vol.~I, Chap.~5, \S 5]{GR}. \end{remark}

It follows from the definitions that the convolution product on $\QC(A)$ restricts to $\QC_{0}(A/S)$ and $\Perf_{0}(A/S)$.

\begin{definition} We denote by $\mathcal{H}_{A/S}$ the tensor category $\QC_{0}(A/S)$ equipped with the convolution product $\star$. \end{definition}

\begin{remark} Recall that we denote by $\DD(A)$ the stable category of all quasi-coherent complexes on $A$, so that $\DD(A)$ is equipped with a $t$-structure whose connective part is $\QC(A)$. Just as in Definition~\ref{perf0}, we can define a full subcategory $$\DD_{0}(A/S) \subset \DD(A)$$ such that $\QC_{0}(A/S)$ is the full subcategory of connective objects in $\DD_{0}(A/S)$. Moreover, the convolution product $\star$ extends to a symmetric monoidal structure on $\DD_{0}(A)$. \end{remark}

\subsubsection{The $\check{A}$-action on $\HA$} \label{sssec:actonHA}

\begin{definition} A \emph{multiplicative line bundle} on $A$ is a homomorphism of group stacks $A\rightarrow B\GM$. We denote by
$$\check{A} := \operatorname{GroupMap}(A, B\GM)$$
the stack of multiplicative line bundles on $A$, and refer to it as the \emph{dual group}, with group structure inherited from the group structure on the target. \end{definition}

We may regard this notion as a ``$1$-categorical'' version of a character. Unwinding definitions, a multiplicative line bundle on $A$ amounts to the data of a line bundle $L$ on $A$ together with a \emph{multiplicative structure}, i.e., a distinguished isomorphism $$m^{*}L\rightarrow\big(L\boxtimes L\big)$$ with homotopy coherence data. 

\begin{remark} $\mathcal{O}$ is canonically endowed with a multiplicative structure and the resulting point of $\check{A}$ gives the unit for the group operation. We have $\operatorname{Aut}_{\check{A}}(\mathcal{O})=\chi(A)$, where $$\chi(A):=\operatorname{GroupMap}(A,\GM)$$ is the group of $0$-categorical (i.e. usual) characters. In particular, if there is a unique (up to isomorphism) multiplicative line bundle on $A$, then we have $\check{A}\simeq B\chi(A).$ For example, we have $$\check{\mathbf{G}}_{a}\simeq B\widehat{\mathbf{G}}_{a},\quad\check{\mathbf{G}}_{m}\simeq B\mathbf{Z}=S^{1}.$$\end{remark}

\begin{lemma}\label{action} There is a canonical action of $\check{A}$ on the tensor category $\mathcal{H}_{A/S}$. \end{lemma}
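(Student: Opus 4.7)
The plan is to define the action by tensoring with multiplicative line bundles. Given an $R$-point $L$ of $\check{A}$, that is, a line bundle on $A_{R} := A \times_S \spc(R)$ equipped with a multiplicative structure $\mu_L : m^*L \xrightarrow{\sim} L\boxtimes L$, I would set
\[ \Phi_L : \mathcal{H}_{A/S} \otimes R \to \mathcal{H}_{A/S} \otimes R, \qquad F \mapsto L \otimes F, \]
with $\otimes$ the pointwise tensor product on $\QC(A_R)$. Since $L$ is invertible, $\Phi_L$ preserves both perfectness and the condition defining $\Perf_0(A/S)$: the cohomology sheaves of $L\otimes F$ have the same support as those of $F$, and the perfectness of local pushforwards (in the sense of Definition~\ref{perf0}) is preserved by the projection formula together with the local triviality of $L$.

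Next I would equip $\Phi_L$ with a symmetric monoidal structure with respect to the convolution product. The structural isomorphism is the chain of equivalences
\[ \Phi_L(F)\star\Phi_L(G) = m_*\bigl((L\otimes F)\boxtimes (L\otimes G)\bigr) \simeq m_*\bigl((L\boxtimes L)\otimes (F\boxtimes G)\bigr) \simeq m_*\bigl(m^*L\otimes (F\boxtimes G)\bigr) \simeq L\otimes m_*(F\boxtimes G) = \Phi_L(F\star G), \]
where the first step rearranges the exterior and pointwise tensor products, the second uses $\mu_L^{-1}$, and the last is the projection formula for the schematic morphism $m$. Compatibility with the unit $\OO_e$ follows from the canonical trivialization $e^*L \simeq \OO_S$ implicit in any multiplicative structure.

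The third step is to promote $L\mapsto\Phi_L$ to a genuine group action of $\check{A}$ on $\mathcal{H}_{A/S}$ in $\CCC$. Functorially in $R$, the trivial multiplicative line bundle acts as the identity; given two multiplicative line bundles $L_1, L_2$, their tensor product inherits a multiplicative structure and one checks $\Phi_{L_1 \otimes L_2}\simeq \Phi_{L_1}\circ\Phi_{L_2}$ via projection formula again. The main obstacle is assembling all of this into coherent $\infty$-categorical data rather than on the level of homotopy categories. The most efficient packaging is to exhibit the construction as a single morphism of commutative group objects $\check{A} \to \operatorname{Aut}^{\otimes}(\mathcal{H}_{A/S})$ in stacks, using the universal property of $\check{A}$ as a group-theoretic mapping stack to transfer the multiplicative structure on line bundles directly into a group action on the tensor category; the geometric input is entirely exhausted by $\mu_L$ and the projection formula, so what remains is purely bookkeeping of higher coherences.
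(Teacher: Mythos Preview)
Your proposal is correct and follows essentially the same approach as the paper: define the action by $F\mapsto L\otimes F$ and verify that the multiplicative structure $m^{*}L\simeq L\boxtimes L$ together with the projection formula for $m$ yield the required compatibility with $\star$. The paper's argument is slightly more informal (it writes the chain of isomorphisms in the opposite direction and does not spell out the unit condition or the higher coherences), but the substance is identical.
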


\begin{proof} Let $L$ denote a multiplicative line bundle, and $\operatorname{act}_{L}$ denote the endofunctor of $\mathcal{H}_{A/S}$ defined by $$F\mapsto \operatorname{act}_{L}(F):=L\otimes F.$$ Note that usual tensor product by $L$ preserves the $0$-dimensionality condition. We must show that this morphism respects the monoidal product $\star$ if $L$ is multiplicative. Indeed by definition we have $$\operatorname{act}_{L}(F\star G)=L\otimes m_{*}(F\boxtimes G).$$ The projection formula identifies this with $$m_{*}(m^{*}L\otimes(F\boxtimes G))$$ and multiplicativity of $L$ identifies this latter with $$m_{*}(L\boxtimes L\otimes F\boxtimes G).$$ Re-arranging the tensor factors and using monoidality of $\pi_{i}^{*}$, this identifies with $$\operatorname{act}_{L}(F)\star\operatorname{act}_{L}(G)$$ and we are done. \end{proof}

\subsubsection{Duals and internal homs in $\HA$} The convolution product $\star$ commutes with colimits in each argument separately, and so we have internal hom objects $$\hhoms_{A}^{\star}(F,G) \in \mathcal{H}_{A/S},$$ defined by natural equivalences $$\homs(F\star G,H)\simeq\homs(F,\hhoms_{A}^{\star}(G,H)).$$

We note that we have decorated internal hom objects in $\mathcal{H}_{A/S}$ with a superscript $\star$ to emphasize that the monoidal product is convolution. We will write $\hhoms_{A}$ for the internal hom object with respect to $\otimes.$ If $F$ is an object of $\QC(A),$ we denote its pre-dual object $$\mathbb{D}F:=\hhoms_{A}(F,\mathcal{O}).$$ If $F$ is an object of $\mathcal{H}_{A/S}$, then we denote its pre-dual object $$\mathbb{D}^{\star}F:=\hhoms_{A}^{\star}(F,\mathcal{O}_{e}).$$ We remind the reader that in a closed symmetric monoidal category, if an object $F$ is dualizable then its pre-dual is necessarily also a dual.

\begin{lemma} There are functorial equivalences $$\hhoms_{A}^{\star}(F,G)\simeq \pi_{1,*}\,\hhoms_{A^{2}}(\pi_{2}^{*}\,F,m^{!}G),$$ where $m^!$ denotes a right adjoint\footnote{Note that $m$ is qcqs schematic since $A \to S$ was assumed qcqs schematic.} to $m_*$. \end{lemma}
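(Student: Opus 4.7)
The plan is to prove the formula by a straightforward chain of adjunctions and conclude via the Yoneda lemma. Concretely, I would test both sides against an arbitrary object $H \in \HA$, and show that the mapping spaces agree.

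First, unpack the definition: $H \star F = m_{*}(H \boxtimes F) = m_{*}(\pi_{1}^{*}H \otimes \pi_{2}^{*}F)$. Then compute
\[
\homs_{\HA}(H \star F, G) \;=\; \homs\bigl(m_{*}(\pi_{1}^{*}H \otimes \pi_{2}^{*}F),\, G\bigr).
\]
I would then chain three adjunctions. Using the $(m_{*}, m^{!})$ adjunction on $\DD(A)$ this becomes
\[
\homs_{\DD(A^{2})}\bigl(\pi_{1}^{*}H \otimes \pi_{2}^{*}F,\, m^{!}G\bigr).
\]
Using the internal-hom adjunction for the ordinary tensor product on $\DD(A^{2})$ this becomes
\[
\homs_{\DD(A^{2})}\bigl(\pi_{1}^{*}H,\, \hhoms_{A^{2}}(\pi_{2}^{*}F,\, m^{!}G)\bigr),
\]
and finally the $(\pi_{1}^{*}, \pi_{1,*})$ adjunction identifies this with
\[
\homs_{\DD(A)}\bigl(H,\, \pi_{1,*}\hhoms_{A^{2}}(\pi_{2}^{*}F,\, m^{!}G)\bigr).
\]
Comparing with the defining property $\homs_{\HA}(H, \hhoms_{A}^{\star}(F,G)) \simeq \homs_{\HA}(H \star F, G)$ of the internal hom, Yoneda yields the desired identification.

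The one subtlety — and the place the argument needs an extra comment — is that $\hhoms_{A}^{\star}(F,G)$ lives in $\HA$ by construction, whereas the right-hand side $\pi_{1,*}\hhoms_{A^{2}}(\pi_{2}^{*}F, m^{!}G)$ is a priori only an object of $\DD(A)$. However, the chain of adjunctions above shows that it corepresents the correct functor on all of $\HA$, and since $\HA$ is the connective part of the induced $t$-structure on $\DD_{0}(A/S)$ and internal homs are unique up to canonical equivalence, this pins it down as an object of $\HA$ (alternatively, one applies the connective truncation, which is transparent to mapping out of $\HA$). Everything else is formal: the $(m_{*}, m^{!})$ adjunction exists because $m$ is qcqs schematic, and the tensor-hom and $(\pi_{1}^{*}, \pi_{1,*})$ adjunctions are standard in $\DD(-)$ of stacks. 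I do not expect any substantive obstacle beyond this bookkeeping.
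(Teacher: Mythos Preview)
Your proof is correct and follows exactly the same chain of adjunctions as the paper: expand $H\star F$, apply $(m_{*},m^{!})$, then tensor-hom, then $(\pi_{1}^{*},\pi_{1,*})$, and conclude by Yoneda. The paper does not comment on the connectivity/$\HA$-membership subtlety you raise, so your extra remark there is a harmless elaboration rather than a deviation.
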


\begin{proof} We have \begin{align*}
  \homs(H\star F,G) &= \homs(m_{*}\big(\pi_{1}^{*}H\otimes\pi_{2}^{*}F),G\big)\\
  &\simeq \homs(\pi_{1}^{*}H\otimes\pi_{2}^{*}F,m^{!}G)\\
  &\simeq \homs\big(\pi_{1}^{*}H,\hhoms_{A^{2}}(\pi_{2}^{*}F,m^{!}G)\big)\\
  &\simeq \homs(H,\pi_{1,*}\hhoms_{A^{2}}\big(\pi_{2}^{*}F,m^{!}G)\big)
\end{align*} for all $H$. \end{proof}

\begin{corollary}\label{duals} Let $A$ be an abelian group scheme. Let $F$ be an object of $\HA$ which is dualizable as an object of $\QC(A)$. Then $F$ is dualizable in $\HA$, and the dual is given by $$\mathbb{D}^{\star}F\simeq \iota^{*}\,\mathbb{D}F\,[\operatorname{dim}(A)\,].$$\end{corollary}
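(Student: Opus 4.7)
The plan is to sidestep a direct computation of $m^{!}\OO_{e}$ (which is delicate when $A$ is not proper) and instead identify $\mathbb{D}^{\star}F$ via Yoneda, using flat base change and Serre duality for compactly-supported sheaves.

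First I would exploit the defining adjunction of the convolution internal hom. For any $H\in\HA$,
\[
\homs_{A}\!\big(H,\,\mathbb{D}^{\star}F\big)\simeq\homs_{A}(H\star F,\,\OO_{e})=\homs_{A}\!\big(m_{*}(H\boxtimes F),\,(i_{e})_{*}k\big)\simeq\homs_{k}\!\big(i_{e}^{*}m_{*}(H\boxtimes F),\,k\big).
\]
I then apply flat base change along the Cartesian square
\[
\begin{tikzcd}
A \arrow[r, "\iota_\Delta"] \arrow[d, "\pi"'] & A^{2} \arrow[d, "m"] \\
\spc(k) \arrow[r, "i_{e}"'] & A
\end{tikzcd}
\]
in which $\iota_{\Delta}=(\id_{A},\iota):A\hookrightarrow A^{2}$ cuts out the fibre $m^{-1}(e)=\{(a,-a)\}$; since $m$ is flat, $i_{e}^{*}m_{*}\simeq\pi_{*}\iota_{\Delta}^{*}$, and $\iota_{\Delta}^{*}(H\boxtimes F)\simeq H\otimes\iota^{*}F$, rewriting the right-hand side as $\homs_{k}\!\big(\pi_{*}(H\otimes\iota^{*}F),\,k\big)$.

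The crucial input is now coherent Serre duality for the smooth separated scheme $A$. Because $\iota^{*}F$, and hence $H\otimes\iota^{*}F$, has $0$-dimensional, hence proper, support, the compactly-supported form of Serre duality is available and yields
\[
\homs_{k}\!\big(\pi_{*}(H\otimes\iota^{*}F),\,k\big)\simeq\homs_{A}\!\big(H\otimes\iota^{*}F,\,\omega_{A}[\operatorname{dim}(A)]\big).
\]
The group structure on $A$ provides a canonical trivialisation $\omega_{A}\simeq\OO_{A}$ (via a translation-invariant top form); moreover $\iota^{*}F$ is dualisable in $\QC(A)$ with dual $\iota^{*}\mathbb{D}F$ (pullback of a dualisable object along the involution $\iota$). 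Transposing $\iota^{*}F$ across the Hom produces $\homs_{A}(H,\,\iota^{*}\mathbb{D}F\,[\operatorname{dim}(A)])$; since the entire chain of equivalences is natural in $H$, Yoneda delivers $\mathbb{D}^{\star}F\simeq\iota^{*}\mathbb{D}F\,[\operatorname{dim}(A)]$, and one notes in passing that the right-hand side is perfect with $0$-dimensional support and so indeed lies in $\HA$.

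Finally, to promote the identification of the pre-dual into genuine dualisability of $F$ in the closed symmetric monoidal category $(\HA,\star)$, I would verify that the natural comparison morphism $\mathbb{D}^{\star}F\star G\to\hhoms^{\star}_{A}(F,G)$ is an equivalence for every $G\in\HA$ — the standard criterion recalled in the remark preceding the corollary. For $G\in\Perf_{0}(A)$ this follows by rerunning exactly the same base-change-plus-Serre-duality computation with $G$ in the role of $\OO_{e}$; the general case follows by passage to the ind-completion, using that $F$ is compact in $\HA$ and that $\star$ preserves compact objects (since $m_{*}$ is well-behaved on $0$-dimensionally supported perfect complexes). The principal obstacle throughout is the invocation of Serre duality for a possibly non-proper $A$; this is navigated precisely by the $0$-dimensional support hypothesis on $F$, which places the relevant pushforwards in the compact-support regime where the duality is classical.
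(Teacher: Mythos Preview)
Your argument is essentially correct and reaches the same formula, but it takes a different route from the paper's. The paper first disposes of dualizability by the one-line observation that $F$ is an iterated extension of point sheaves, each of which is $\star$-invertible, and then computes the pre-dual directly from the preceding lemma: since $m$ is smooth with trivial relative canonical bundle, $m^{!}\OO_{e}\simeq m^{*}\OO_{e}[\dim A]\simeq\OO_{\Gamma_{\iota}}[\dim A]$, and the resulting expression $\pi_{1,*}(\pi_{2}^{*}\mathbb{D}F\otimes\OO_{\Gamma_{\iota}})[\dim A]$ is recognised as the integral transform $\iota_{*}\mathbb{D}F[\dim A]$. Your Yoneda-plus-base-change-plus-Serre-duality approach is a perfectly good alternative, but note that the concern you flag is inverted: the computation of $m^{!}\OO_{e}$ is \emph{not} delicate (smoothness of $m$ handles it uniformly, proper or not), whereas it is precisely your Serre duality step for $\pi:A\to\spc(k)$ that requires the compact-support hypothesis you carefully supply via the $0$-dimensional support of $F$. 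Your dualizability verification via the criterion $\mathbb{D}^{\star}F\star G\to\hhoms^{\star}_{A}(F,G)$ works but is considerably more laborious than the paper's ``extension of invertibles'' remark.

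There is one genuine, if small, gap. Your Yoneda argument only tests against $H\in\HA$, hence identifies $\mathbb{D}^{\star}F$ with the reflection of $\iota^{*}\mathbb{D}F[\dim A]$ into $\HA$; to conclude that this \emph{is} $\iota^{*}\mathbb{D}F[\dim A]$ you must know the latter is connective. Your justification (``perfect with $0$-dimensional support and so indeed lies in $\HA$'') does not establish this, since $\HA$ is by definition the connective part and perfection with $0$-dimensional support says nothing about connectivity. The paper addresses this explicitly in its last sentence by reducing to point sheaves, and you should do the same.
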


\begin{proof} First note that $F$ is obtained as an iterated extension of point sheaves, which are invertible with respect to $\star,$ and thus dualizable. It follows then that $F$ is dualizable and we need only compute the pre-dual $\mathbb{D}^{\star}F.$ By the lemma above this is given by \begin{align*} \pi_{1,*}\,\hhoms_{A^{2}}(\pi_{2}^{*}F,m^{!}\mathcal{O}_{e}) &\simeq\pi_{1,*}\big(\pi_{2}^{*}\mathbb{D}F\otimes m^{!}\mathcal{O}_{e}\big)\\
&\simeq \pi_{1,*}\big(\pi_{2}^{*}\mathbb{D}F\otimes\mathcal{O}_{\Gamma_{\iota}}[\,\operatorname{dim}(A)\,]\big),
\end{align*} where $\Gamma_{\iota}$ denotes the graph of the inversion $\iota$. 

We recognize this last expression as the integral transform corresponding to the kernel $\mathcal{O}_{\Gamma_{\iota}}[\,\operatorname{dim}(A)\,]$, applied to $\mathbb{D}F$. It is a general fact that the integral transform with kernel $\mathcal{O}_{\Gamma_{f}}$ equals $f_{*}$; to prove this write $$\mathcal{O}_{\Gamma_{f}}=(\operatorname{id}\times f)_{*}\OO$$ and apply the projection formula. Specializing to our situation, the above expression reduces to $\iota_{*}\mathbb{D}F[\,\operatorname{dim}(A)\,]\simeq \iota^{*}\mathbb{D}F[\,\operatorname{dim}(A)\,]$.

It remains only to be seen that this is still connective. Since connective objects are stable under colimits and extension, this follows from the case of point sheaves. \end{proof}

\begin{remark} For a point $a\in A(k)$, it is not hard to show that the above formula reduces to $$\mathbb{D}^{\star}\mathcal{O}_{a}\simeq\mathcal{O}_{\iota(a)}.$$  \end{remark}

\subsubsection{Classifying stack} If $C$ is any tensor category then there is a stack $\mathbb{B}C$: $$\mathbb{B}C:S\mapsto\operatorname{Fun}^{\otimes}(C,\QC(S))$$ and a tautological tensor functor $$\tau_{C}:C\rightarrow\QC(\mathbb{B}C)$$ classifying the identity morphism of $\mathbb{B}C$. \begin{example}If $\YY$ is Tannakian then we have $\YY\simeq\mathbb{B}\QC(\YY)$ and $\tau_{\QC(\YY)}$ is an equivalence. \end{example}

\begin{lemma}\label{class=dual} Let $A$ be an abelian group scheme. Then there is a canonical isomorphism $$\check{A}\simeq\mathbb{B}\HA.$$\end{lemma}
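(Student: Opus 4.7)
The plan is to prove the equivalence by constructing mutually inverse morphisms $\Phi: \check{A} \to \mathbb{B}\HA$ and $\Psi: \mathbb{B}\HA \to \check{A}$ via their functors of points.

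First I would construct $\Phi$. An $S$-point of $\check{A}$ is a multiplicative line bundle $L$ on $A_S := A \times S$. Send it to the tensor functor $\Phi(L): \HA \to \QC(S)$ defined by $F \mapsto \pi_{S,*}(L \otimes p_A^*F)$, where $p_A: A_S \to A$ and $\pi_S: A_S \to S$ are the projections. Symmetric monoidality follows from combining base change with the projection formula for $m: A \times A \to A$ and the multiplicativity isomorphism $m^*L \simeq L \boxtimes L$:
$$\Phi(L)(F \star G) \simeq \pi_{S,*}\, m_{S,*}\bigl(m_S^*L \otimes (F \boxtimes G)\bigr) \simeq \Phi(L)(F) \otimes_{\OO_S} \Phi(L)(G).$$
The unit constraint $\Phi(L)(\OO_e) \simeq \OO_S$ uses the canonical trivialization $e^*L \simeq \OO_S$ encoded in the multiplicative structure of $L$.

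Next I would construct $\Psi$ via evaluation on the Picard part of $\HA$. Given $\phi \in \FUN(\HA, \QC(S))$, I extract a multiplicative line bundle $\Psi(\phi)$ on $A_S$ by functor of points: for any $T \to S$ and any $T$-point $a: T \to A$ over $S$, the section $\widetilde a := (a, \id_T): T \to A \times T$ yields a $\star$-invertible object $\widetilde a_* \OO_T$ in $\HA \otimes \QC(T) \simeq \QC_0(A \times T/T)$, with inverse $\widetilde{-a}_*\OO_T$. Its image under the $\QC(T)$-linear extension of $\phi$ is a line bundle $\Psi(\phi)(a) := \phi_T(\widetilde a_*\OO_T)$ on $T$. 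The identity $\widetilde{a+b}_*\OO_T \simeq \widetilde a_*\OO_T \star \widetilde b_*\OO_T$ combined with the symmetric monoidal structure of $\phi$ supplies the multiplicative structure on $\Psi(\phi)$.

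The composite $\Psi \circ \Phi \simeq \id$ is a routine projection formula computation: for $a: T \to A$, the relation $\pi_T \circ \widetilde a = \id_T$ gives
$$\Psi(\Phi(L))(a) \simeq \pi_{T,*}\bigl(L_T \otimes \widetilde a_*\OO_T\bigr) \simeq \widetilde a^* L_T \simeq a^*L,$$
so $\Psi(\Phi(L)) \simeq L$ as multiplicative line bundles.

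The hard part is showing $\Phi \circ \Psi \simeq \id$, i.e., recovering a tensor functor $\phi$ from its values on point-skyscrapers together with their convolution structure. By construction, both $\phi$ and $\Phi(\Psi(\phi))$ agree on all $\widetilde a_*\OO_T$ compatibly with the tensor structure. To propagate the agreement to all of $\HA$, I would use that any object of $\Perf_0(A)$ is supported on a finite closed subscheme $Z \subset A$, and that $\Perf(Z)$ for such $Z$ is generated under finite colimits, retracts, and shifts by structure sheaves of closed points, each of which is built from point-skyscrapers by iterated extensions; since $\HA = \Ind\Perf_0(A)$, the point-skyscrapers generate $\HA$ under colimits, retracts, and extensions. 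As tensor functors preserve all these operations (being colimit-preserving and preserving fibre sequences by exactness), the identification extends. The main subtlety is the coherence of the natural equivalence at the level of symmetric monoidal $\infty$-structure, which I would handle by packaging both constructions via the universal coevaluation of Subsubsection~\ref{sssec:coev}, thereby reducing the construction of a natural equivalence to the data on generators.
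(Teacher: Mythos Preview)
Your proposal is correct and follows essentially the same approach as the paper: both construct mutually inverse maps by (a) sending a tensor functor to the multiplicative line bundle assembled from its values on the $\star$-invertible skyscraper sheaves, and (b) sending a multiplicative line bundle $L$ to the tensor functor $F \mapsto \pi_{S,*}(L \otimes p_A^*F)$, which is exactly the paper's $\pi_{S,*}\hhoms_{A\times S}(L^{-1},\pi_A^*(-))$ since $L$ is invertible. You are somewhat more careful than the paper (working with arbitrary $T$-points rather than $k$-points, and spelling out the generation argument for $\Phi\circ\Psi\simeq\id$), but the paper explicitly labels its own account as informal and omits these same details.
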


\begin{proof} We will give a slightly informal account of the proof, as we do not rely explicitly on this result. Consider a tensor functor $$F:\HA\rightarrow\QC(S),$$ representing an $S$-point of $\mathbb{B}\HA$. We must produce an $S$-point of $\check{A}$, which is to say a $S$-family of multiplicative line bundles on $A$. For $a\in A(k)$, the structure sheaf $\OO_{a}$ is invertible in $\HA$, and so $F(\mathcal{O}_{a})=:L_{a}$ is a line bundle on $S$, and as $a$ varies these assemble into a line bundle $L$ on $A\times S$. Monoidality supplies coherent isomorphisms of line bundles on $S$: $$L_{a}\otimes L_{b}\rightarrow L_{a+b}$$ which in turn supply $L$ with a multiplicative structure. By definition this produces an $S$-point of $\check{A}$.

Conversely, let $L$ be an $S$-point of $\check{A}$. Then we produce a tensor functor $\HA\rightarrow\QC(S)$ by taking $$\pi_{S,*}\hhoms_{A\times S}\big(L^{-1},\pi_{A}^{*}(-)\big).$$ One verifies that these constructions are mutually inverse. \end{proof}

\begin{corollary} There is a natural morphism $$\tau_{\HA}:\HA\rightarrow\QC(\mathbb{B}\HA)\simeq\QC(\check{A}).$$ \end{corollary}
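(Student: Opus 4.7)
The plan is to derive this immediately from Lemma~\ref{class=dual} combined with the tautological tensor functor $\tau_{C}: C \to \QC(\mathbb{B}C)$ introduced in the preceding paragraph. Applied to $C = \HA$, that construction produces a canonical tensor functor $\tau_{\HA}: \HA \to \QC(\mathbb{B}\HA)$, obtained by evaluating $\HA$ on the identity morphism of $\mathbb{B}\HA$ viewed as a tautological $\mathbb{B}\HA$-valued point of itself.

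To reach $\QC(\check{A})$, I would then transport along the canonical equivalence $\check{A} \simeq \mathbb{B}\HA$ furnished by Lemma~\ref{class=dual}. Pullback along any equivalence of stacks induces a symmetric monoidal equivalence on quasi-coherent sheaves, and composing with this yields the advertised morphism.

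There is essentially no obstacle here; the substantive work has already been done in establishing Lemma~\ref{class=dual}. For concreteness, one can unwind the description given in the proof of that lemma: the resulting functor sends an object $F \in \HA$ to the quasi-coherent sheaf on $\check{A}$ whose fibre at an $S$-point $L$ (a multiplicative line bundle on $A \times S$) is $\pi_{S,*}\hhoms_{A \times S}(L^{-1}, \pi_A^{*}(F))$. This explicit formula simultaneously exhibits the naturality of the construction in $A$ and makes transparent that $\tau_{\HA}$ intertwines the $\star$-monoidal structure on $\HA$ with the $\otimes$-monoidal structure on $\QC(\check{A})$, the key point being that the multiplicative structure on $L$ converts convolution into the ordinary tensor product after applying $\hhoms_{A\times S}(L^{-1}, -)$.
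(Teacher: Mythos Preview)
Your proposal is correct and matches the paper's approach exactly: the corollary is stated without proof in the paper because it is immediate from the tautological functor $\tau_{C}$ applied to $C=\HA$ together with Lemma~\ref{class=dual}, which is precisely what you do. The additional unwinding you provide in the last paragraph is accurate and consistent with the explicit description in the proof of Lemma~\ref{class=dual}, though not required.
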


\begin{remark} Unlike the case of $\QC(\YY)$, $\tau_{\HA}$ is typically not an equivalence. For example, for $A\in\{\mathbf{G}_a,\GM\}$, recall that $\check{A}$ is given by $B\widehat{\mathbf{G}}_{a}$ and $B\mathbf{Z} \simeq S^1$, respectively. In both cases $\tau_{\HA}$ can be identified with the inclusion \[\Ind(\Perf(\check{A})) \hookrightarrow \QC(\check{A}).\] \end{remark}

\subsubsection{Integrality of $\HA$.}

Let $A$ be an abelian group scheme over a field $K$.

Note that whilst for distinct points $x,y$ of $A$, we have $\mathcal{O}_{x}\otimes\mathcal{O}_{y}=0$, the corresponding statement for $\star$ is false: we have $\OO_{x}\star\OO_{y}=\OO_{x+y}.$ In fact, $\mathcal{H}_{A/K}$ is an integral domain.

\begin{lemma}\label{HE int} The tensor category $\mathcal{H}_{A/K}$ is an integral domain.\end{lemma}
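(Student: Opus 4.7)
The plan is to exhibit a conservative symmetric monoidal functor $e_A : (\HA, \star) \to (\Mod_K, \otimes^L_K)$ and invoke the evident integral domain property of the target. The candidate is the derived global sections functor $e_A := R\Gamma(A, -)$ restricted to $\HA$.

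The first step is to verify symmetric monoidality. The unit $\mathcal{O}_e$ of $\star$ is sent to $K$, the unit of $\otimes^L_K$. For the binary operation, combine the tautology $R\Gamma(A, m_*(-)) \simeq R\Gamma(A \times A, -)$ with the K\"unneth isomorphism
$$R\Gamma(A \times A, F \boxtimes G) \simeq R\Gamma(A, F) \otimes^L_K R\Gamma(A, G),$$
which follows routinely from flat base change along $\pi : A \to \spc(K)$ and the projection formula. This is essentially the only point at which the derived setting enters in any substantive way, and the argument is entirely standard.

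To conclude, I would check two facts. First, $e_A$ is conservative on $\HA$: every $F \in \HA$ has support equal to a finite disjoint union of thickened closed points of $A$, and the inclusion of such a support is an affine closed immersion, so $R\Gamma(A, F)$ is the direct sum, over support points, of the stalks of $F$ regarded as $K$-complexes, and vanishes precisely when $F$ does. Second, $\Mod_K$ is an integral domain, since over a field every object is quasi-isomorphic to a direct sum of shifts of its homology modules, and the tensor product of two non-zero such sums is visibly non-zero. For non-zero $F, G \in \HA$, conservativity gives $e_A(F), e_A(G) \neq 0$; integrality of $\Mod_K$ gives $e_A(F) \otimes^L_K e_A(G) \neq 0$; symmetric monoidality gives $e_A(F \star G) \neq 0$; and so $F \star G \neq 0$, as desired.
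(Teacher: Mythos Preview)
Your proof is correct and takes a genuinely different route from the paper's. The paper argues directly inside $\HA$: using that convolution is $t$-exact on $0$-dimensionally supported objects, one reduces to the heart; then for nonzero $F,G\in\HA^{\heartsuit}$ one chooses injections $\OO_x\hookrightarrow F$ and $\OO_y\hookrightarrow G$ from skyscrapers and observes that left-exactness of $\star$ yields an injection $\OO_{x+y}\hookrightarrow F\star G$. Your approach instead transports the problem along a conservative tensor functor to a known integral domain. The symmetric monoidality of $\pi_*=R\Gamma(A,-)$ is a clean observation: since $\pi:A\to\spc K$ is a group homomorphism (to the trivial group), one has $\pi\circ m=\pi\times\pi$ as maps $A\times A\to\spc K$, and K\"unneth over a field finishes it. The paper's argument is more elementary and stays internal to $\HA$; yours is a transfer principle that would apply to any conservative tensor functor landing in an integral domain, and makes the role of the base field explicit.

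One small inaccuracy: not every object of $\HA$ has support a \emph{finite} union of thickened points (e.g.\ infinite direct sums of skyscrapers), so your justification of conservativity is incomplete as written. The conclusion still holds. On the heart: any nonzero $M\in\HA^{\heartsuit}$ contains a nonzero coherent subsheaf (write $M$ as the filtered union of its coherent subsheaves, using that $A$ is Noetherian), which is a finite extension of skyscrapers and hence has nonzero global sections; left-exactness of $\Gamma(A,-)$ then gives $\Gamma(A,M)\neq 0$. Moreover $\Gamma(A,-)$ is exact on $\HA^{\heartsuit}$ (coherent sheaves with $0$-dimensional support have no higher cohomology, and cohomology commutes with filtered colimits on a qcqs scheme), so for arbitrary $F\in\HA$ one has $H_i(R\Gamma(A,F))\simeq\Gamma(A,H_i(F))$, and conservativity follows.
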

\begin{proof} Recall that non-zero objects of $\mathcal{H}_{A/S}$ are quasi-coherent sheaves whose cohomology sheaves have $0$-dimensional support. Given two such $F$ and $G$, the claim is that $F \star G$ is non-zero. Since the convolution product is left-exact in each argument (by left-exactness of pull-back along the projections $A \times A \to A$ and push-forward along $m : A \times A \to A$), we have $H^i(F \star G) \simeq H^i(F) \star H^i(G)$. We may thus assume that $F$ and $G$ are discrete. Since they have $0$-dimensional support, we may find points $x,y \in A$ and injections $\OO_{x}\rightarrow F$ and $\OO_{y}\rightarrow G$. These give rise to an injection $\OO_{x+y} \rightarrow F \star G$ (using left-exactness of convolution again), so $F \star G$ is nonzero as claimed. \end{proof}

\subsubsection{Global sections functor} Consider the unique symmetric monoidal stable continuous functor $$\mathbf{1}:\DD(k)\rightarrow\DD_{0}(A/S), \quad k \mapsto \OO_e.$$ The right adjoint to $\mathbf{1}$ will be denoted
$$\Gamma_{\SSSA} : \DD_{0}(A) \rightarrow \DD_{k}, \quad F \mapsto \operatorname{Hom}_{\DD(A)}(\OO_{e}, F)$$ and called the functor of \emph{global sections} of $\DD_{0}(A)$. (We recall our heuristic that $\HA$ is the category of quasi-coherent sheaves on a putative stack $\SSSA$.) We will sometimes abuse notation and write $\Gamma_{\SSSA}$ for the induced functor $$\HA\rightarrow\DD_{0}(A)\xrightarrow{\Gamma_{\SSSA}}\DD(k).$$

\begin{remark} Note that $\mathbf{1}$ restricts to a tensor functor $$\mathbf{1}_{+}:\Mod_k \rightarrow\mathcal{H}_{A/S}.$$ On the other hand, $\Gamma_{\SSSA}$ preserves compact objects but not necessarily connective ones. \end{remark}

\subsection{Examples: Rational, trigonometric, and elliptic}

We now specialize to some abelian group schemes $A \to S$ of dimension $1$.  The line bundle $\mathcal{O}(e)$ is written $\vartheta$.

\subsubsection{Rational case}

Take $A = \mathbf{G}_{a,S} \to S$ the additive group. In this case $\mathcal{H}_{A}$ has an action by the dual group $$\check{A}=B\widehat{\mathbf{G}}_{a}.$$ 

\subsubsection{Trigonometric case}

Take $A = \mathbf{G}_{m,S} \to S$ the multiplicative group. In this case $\mathcal{H}_{A}$ has an action by the dual group $$\check{A}=S^{1}.$$

\subsubsection{Elliptic case}

Take $A = E \to S$ a relative elliptic curve. Notice that in this case the dual group $\check{E}$ is identified with $E$. Indeed, it is a standard fact that a line bundle has a multiplicative structure iff it is of degree $0$, and that in this case such a structure is unique and admits no non-trivial automorphisms.

\subsubsection{Fourier--Mukai transform}\label{FMHE} Recall the tautological functor $\tau_{\HE} : \HE \to \QC(\mathbb{B}\HE)$. Under the identification $E\simeq\check{E}\simeq\mathbb{B}\HE$ supplied by Lemma~\ref{class=dual}, this corresponds to a canonical tensor functor \[ \HE \to \QC(E) \] which we now describe more explicitly. Take $S=\spc(k)$ for simplicity.

We denote by $$\FM^{*}:\big(\DD(E),\star\,\big)\rightarrow\big(\DD(E),\otimes\,\big),$$ the Fourier--Mukai transform first introduced in \cite{Mukai}. We adopt the normalization with $$\FM^{*}(\mathcal{O}_{x})=\frac{\mathcal{O}(x)}{\OO(e)},$$ and recall that $\FM^*$ is a symmetric monoidal equivalence.

\begin{lemma} The functor $\FM^*$ restricts to a fully faithful tensor functor $\HE\rightarrow\QC(E)$. \end{lemma}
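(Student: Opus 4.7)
The plan is to leverage the classical fact that the Fourier--Mukai transform is already a symmetric monoidal equivalence $(\DD(E),\star)\xrightarrow{\sim}(\DD(E),\otimes)$. Granted this, both fully faithfulness of the restriction $\FM^{*}|_{\HE}$ and its compatibility with tensor products (including colimit-preservation in each argument) are automatic. The only substantive content is to verify that the image of $\HE$ lies in $\QC(E)\subset\DD(E)$, i.e. that $\FM^{*}$ is appropriately left $t$-exact on the subcategory of $0$-dimensionally supported objects.

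First I would reduce to a check on generators. Since $\HE$ is the connective part of $\DD_{0}(E)$ and is generated under filtered colimits by its heart, while the heart consists of coherent sheaves on $E$ with $0$-dimensional support, it suffices to check that $\FM^{*}(F)$ is connective for any coherent sheaf $F$ with $0$-dimensional support. By the structure theorem for such sheaves on a smooth curve, $F$ decomposes as a finite direct sum of finite-length modules supported at closed points $x\in E(k)$, each of which is an iterated extension of the skyscraper $\OO_{x}$. Since connective objects of $\QC(E)$ are stable under direct sums, extensions and filtered colimits, and $\FM^{*}$ preserves all three (being a colimit-preserving equivalence on the ambient stable category), it is enough to show $\FM^{*}(\OO_{x})$ is connective for every closed point $x$.

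This last step is immediate from the stated normalization: $\FM^{*}(\OO_{x})\simeq\OO(x)/\OO(e)\simeq\OO_{E}(x-e)$, a degree-zero line bundle on $E$, hence in the heart of $\QC(E)$ and a fortiori connective. Since the unit $\OO_{e}\in\HE$ is sent to $\OO_{E}$, the unit of $(\QC(E),\otimes)$, the restricted functor has the correct target for all the monoidal bookkeeping inherited from $\FM^{*}$.

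If anything constitutes an obstacle, it is pinning down the generation statement above in the ind/prestable formalism of Section~2: namely, that every connective object of $\HE=\Ind(\Perf_{0}(E))$ is a filtered colimit of objects in the heart, and that this heart is accurately described as the category of finite-length $0$-dimensionally supported coherent sheaves. Both statements are standard, but should be spelled out given that $\HE$ is initially defined via ind-completion of a subcategory of perfect complexes rather than as an abelian category of sheaves.
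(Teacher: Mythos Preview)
Your proposal is correct and follows essentially the same route as the paper: reduce the connectivity check to the generators $\OO_{x}$ using that $\HE$ is generated by skyscrapers under colimits and extensions, then invoke the normalization $\FM^{*}(\OO_{x})\simeq\OO(x-e)$. The only slip is the phrase ``generated under filtered colimits by its heart'' (shifts like $\OO_{x}[1]$ are not such colimits), but you immediately use the correct closure properties (extensions and colimits) in the actual argument, and you even flag this point yourself in the final paragraph.
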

\begin{proof} We need only check that for $F \in \HE$, $\FM^*(F) \in \DD(E)$ is connective and hence belongs to $\QC(E)$. Since $\HE$ is generated under colimits and extensions by skyscraper sheaves $\OO_{x}$, the claim follows from the fact that $\FM^{*}(\OO_{x})$ lies in the heart of $\DD(E)$. \end{proof}

\begin{remark}To see that $\Phi^{*}:\HE\rightarrow\QC(E)$ agrees with the tautological functor $\tau_{\HE}$, use the fact that the integral kernel for the Fourier--Mukai transform represents the universal family of multiplicative line bundles on $E$.\end{remark}

\section{Elliptic loop spaces}

\subsection{The definition}

We fix an elliptic curve $E$ over $k$.

\begin{definition} The \emph{elliptic loop space} of a tensor category $C$ is the stack $$\hh(C):=\Fun(C,\HE),$$ equipped with the action of $E$ induced from its action on $\HE$. For a stack $\YY$, its elliptic loop space is $$\hh(\YY) := \hh(\QC(\YY)),$$ the elliptic loop space of the tensor category $\QC(\YY)$. \end{definition}

See Subsect.~\ref{ssec:stktenfun} for the definition of the stack of tensor functors, and \ref{sssec:actonHA} for the action of $E\simeq \check{E}$ on $\HE$.

Any tensor functor $f:C\rightarrow D$ induces a morphism of elliptic loop spaces $\hh(f):\hh(D)\rightarrow\hh(C).$ This data naturally assembles into a functor $$\hh:\CCC^{\operatorname{op}}\longrightarrow E\circlearrowright\mathbf{St}_{k},$$ valued in the category of stacks with an action of $E$.

Similarily, if $f:\YY\rightarrow\XX$ is a morphism of stacks, the tensor functor $f^*$ induces a morphism $\hh(f) : \hh(\YY) \to \hh(\XX)$. There is a functor $$\hh:\mathbf{St}_{k}\longrightarrow E\circlearrowright\mathbf{St}_{k}.$$

\begin{remark} We will see below that when $\YY$ is ``nice'', $\hh(\YY)$ is the open of the mapping stack $\Map(E, \YY)$ consisting of maps $f : E \to \YY$ for which $$f^{*}:\QC(\YY)\rightarrow\QC(E)$$ factors through the subcategory $\FM^{*}\HE.$ \end{remark}

\begin{remark} If $E\rightarrow E'$ is a map of elliptic curves then there is an induced tensor map $\mathcal{H}_{E'}\rightarrow\HE$ which induces a functor $L_{E'}(C)\rightarrow\hh(C)$. In particular we can take the multiplication by $d$ map and so produce morphisms $$\operatorname{Adams}^{E}_{d}:\hh(C)\rightarrow\hh(C).$$\end{remark}

\subsection{Loops as maps} For sufficiently nice stacks $\YY$, we now show that the elliptic loop space $\hh(\YY)$ can be realized as an open of the mapping stack $\Map(E,\YY)$.

Given a tensor category $C$, we write $$\mm(C):=\Fun(C,\QC(E))$$ for the stack of tensor functors from $C$ to $\QC(E)$. Composition with the fully faithful tensor functor $\FM^{*}:\HE\rightarrow\QC(E)$ (\ref{FMHE}) induces a natural monomorphism of stacks \[ \hh(C) \to \mm(C). \]
In particular, for \emph{Tannakian} stacks $\YY$ this becomes a natural monomorphism $$\hh(\YY) \to \mm(\YY) \simeq \Map(E,\YY).$$

\begin{theorem}\label{open in map} If $\YY$ is pluperfect then the canonical morphism $\hh(\YY)\rightarrow \Map(E,\YY)$ is an open immersion. \end{theorem}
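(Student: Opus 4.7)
The plan is to realize the monomorphism $g:\hh(\YY)\to\Map(E,\YY)$ as the open substack of $\Map(E,\YY)$ cut out by a fibrewise ``$0$-dimensional Fourier--Mukai transform'' condition on the pulled-back generators of $\Perf(\YY)$. That $g$ is a monomorphism is immediate from the full faithfulness of $\FM^{*}:\HE\hookrightarrow\QC(E)$ combined with the Tannakian property packaged into the pluperfect hypothesis; the substance of the theorem is the openness.

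To prove openness, I would test against an affine $S$ and an $S$-point of $\Map(E,\YY)$, i.e.\ a map $f:E\times S\to\YY$, and pin down the locus in $S$ where $f^{*}$ factors through the full tensor subcategory $\FM^{*}_{S}(\HE\otimes\QC(S))\subset\QC(E\times S)$. Since the latter is stable under colimits, extensions and $\otimes$, the factorization property of the tensor functor $f^{*}$ is equivalent to the same property for the images of a tensor-generating family of perfect complexes $\{F_{i}\}\subset\Perf(\YY)$. The pluperfect hypothesis on $\YY$ should permit taking this family to be finite, reducing the problem to a finite intersection of geometric conditions on $S$; this is the key point at which the hypothesis enters.

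For each generator, let $Q_{i}\in\QC(E\times S)$ be the unique object with $\FM^{*}_{S}(Q_{i})\simeq f^{*}F_{i}$, which exists and is perfect since $\FM^{*}_{S}$ is a symmetric monoidal equivalence. Using the identification of $\FM^{*}\HE$ inside $\QC(E)$ from Subsect.~\ref{FMHE}, the desired factorization amounts to $Q_{i}\in\QC_{0}(E\times S/S)$, i.e.\ that each fibre $Q_{i,s}$ is a $0$-dimensionally supported perfect complex on $E_{s}$. The support $Z_{i}\subset E\times S$ of $Q_{i}$ is closed, and proper over $S$ because $E$ is proper over $k$; by upper semicontinuity of fibre dimension for the proper morphism $Z_{i}\to S$, the locus of $s\in S$ over which the fibre $Z_{i,s}$ has positive dimension is closed. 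Its complement is the open we want, and a finite intersection over $i$ gives the open substack of $\Map(E,\YY)$ equal to $\hh(\YY)$.

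The main obstacle I anticipate is the reduction from the abstract tensor-functor factorization to a finite fibrewise support condition: this is where the pluperfect hypothesis does its real work, presumably combining a finiteness statement for perfect generators of $\QC(\YY)$ with the closure of $\FM^{*}\HE$ under $\otimes$, finite colimits and extensions inside $\Perf(E)$. Once that reduction is in place, the semicontinuity step and the glueing to an open substack are entirely standard.
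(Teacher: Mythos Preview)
Your proposal is correct and follows the same overall strategy as the paper: reduce the factorization condition to finitely many tensor-generators $\{F_i\}$ via pluperfectness, apply the (relative) Fourier--Mukai equivalence to obtain perfect complexes $Q_i$ on $E\times S$, and then show that the relative $0$-dimensional support condition on each $Q_i$ is open on $S$.

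The difference lies in the final openness step. The paper isolates this as a separate Lemma~\ref{0-open}, stated for an \emph{arbitrary} scheme $Y$ in place of $E$: it reduces to $Y$ affine, localizes $S$ at $s$, decomposes as $F_Y\boxtimes F_S$, and concludes via Nakayama and faithfully flat descent that $\Gamma(Y,F_Y)$ is perfect. Your argument instead exploits the properness of $E$: the support $Z_i\subset E\times S$ is closed, hence proper over $S$, so Chevalley's upper semicontinuity of fibre dimension (combined with properness to push the bad locus down to a closed subset of $S$) gives the open directly. Your route is shorter and more geometric in the elliptic case at hand; the paper's lemma is more general, applying equally to the non-proper group schemes $\mathbf{G}_a$ and $\mathbf{G}_m$ that appear in the rational and trigonometric variants.

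One small point: you write $Q_i\in\QC(E\times S)$, but since $(\FM^*_S)^{-1}$ need not preserve connectivity, $Q_i$ a priori lives only in $\DD(E\times S)$. The paper handles this by explicitly passing to stabilizations before invoking the generators; your support argument is unaffected, since it works for perfect complexes in $\DD$.
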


The notion of pluperfectness will be introduced below. Note that Theorem~\ref{open in map} in particular yields the following geometricity result for $\hh(\YY)$.

\begin{corollary}\label{repr} For any pluperfect Artin stack $\YY$, the elliptic loop space $\hh(\YY)$ is an Artin stack. \end{corollary}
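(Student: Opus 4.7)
The plan is to deduce the corollary directly from Theorem~\ref{open in map} together with the standard representability of mapping stacks from a smooth proper scheme into an Artin stack.

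By Theorem~\ref{open in map}, pluperfectness of $\YY$ supplies an open immersion $\hh(\YY) \hookrightarrow \Map(E,\YY)$. Since open substacks of Artin stacks are Artin, it suffices to show that $\Map(E,\YY)$ is an Artin stack. Here $E$ is a smooth proper scheme of relative dimension $1$ over $k$, and the projection $E \to \spc(k)$ is in particular flat, proper, and of finite presentation. Under mild hypotheses on $\YY$ (locally of finite presentation with a reasonable diagonal, conditions that should be part of, or follow from, pluperfectness), the general representability results for mapping stacks --- see for instance \cite[Cor.~3.3]{HAG} or the more refined version in \cite[\S 19.1.0]{SAG} --- then guarantee that $\Map(E,\YY)$ is Artin and locally of finite presentation.

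Combining these two observations, $\hh(\YY)$ is an open substack of an Artin stack, hence Artin.

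The main subtlety is purely bookkeeping: one must confirm that the notion of \emph{pluperfectness} (defined after the statement of Theorem~\ref{open in map}) not only ensures the hypothesis of that theorem but also places $\YY$ in the class to which the representability results for $\Map(E,-)$ apply. Since pluperfectness is presumably designed to be at least as restrictive as Tannakian-plus-Artin of finite presentation, no additional argument should be needed beyond invoking the representability statement and the fact that open immersions preserve geometricity.
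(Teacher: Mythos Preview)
Your argument is correct and matches the paper's proof almost verbatim: invoke Theorem~\ref{open in map} for the open immersion, then cite the standard representability of $\Map(E,\YY)$ for $E$ smooth proper and $\YY$ Artin (the paper points to \cite{PTVV}). Your hedging about whether pluperfectness guarantees the hypotheses on $\YY$ is unnecessary, since the corollary explicitly assumes $\YY$ is Artin; pluperfectness is only used to feed Theorem~\ref{open in map}.
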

\begin{proof} Since $E$ is a smooth and proper scheme and $\YY$ is Artin, it is well-known that $\Map(E,\YY)$ is Artin. For example, this can be deduced from Lurie's version of Artin's representability theorem (see e.g. the discussion following the proof of Theorem~2.5 in \cite{PTVV}). Hence the claim follows from Theorem~\ref{open in map}. \end{proof}

\begin{remark} By definition, Theorem~\ref{open in map} implies that when $\YY$ is pluperfect the elliptic loop space $\hh(\YY)$ can equivalently be defined as the substack of maps $f : E \to \YY$ for which $$f^{*}:\QC(\YY)\rightarrow\QC(E)$$ factors through the subcategory $\FM^{*}\HE$. \end{remark}

\subsubsection{Pluperfect stacks} Recall that a stack $\YY$ is called \emph{perfect} in \cite{BFN} if it has affine diagonal and is compactly generated by a set of perfect complexes. In particular, perfect stacks are Tannakian by \cite{BHL}. We introduce a tensor variant of this notion:

\begin{definition} We call $\YY$ \emph{pluperfect} if it is Tannakian and there exists a finite set $\{F_{i}\}$ of perfect complexes on $\YY$ which generate the derived category $\DD(\YY)$ under colimits, finite limits, and tensor products. \end{definition}

\begin{example} For $X$ a qcqs algebraic space, $X$ is pluperfect: it is Tannakian by \cite[Thm~9.6.0.1]{SAG} and $\DD(X)$ is compactly generated by a single perfect complex by \cite[Thm.~9.6.3.2]{SAG}. \end{example}

In the case of stacks $\YY$, $\DD(\YY)$ is often perfect but rarely compactly generated by \emph{finitely} many perfect complexes. Nevertheless, we have:

\begin{lemma} Let $X$ be quasi-projective and $G$ an affine algebraic group acting linearly on $X$. Then the quotient stack $X/G$ is pluperfect. \end{lemma}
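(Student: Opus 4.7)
My plan is to verify the two conditions of pluperfectness separately.

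\emph{Tannakian property.} Since $G$ is affine and $X$ is quasi-projective (in particular separated), the stack $X/G$ has affine diagonal. By \cite{BFN} (or the more general results of Hall--Rydh), $X/G$ is a perfect stack, i.e.\ $\DD(X/G)$ is compactly generated by perfect complexes. The recognition theorem \cite[Thm.~1.3]{BHL} then applies and yields the Tannakian property.

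\emph{Finite tensor generation.} I would fix a faithful finite-dimensional representation $V$ of $G$, yielding a vector bundle $\mathcal{V}$ on $BG$, and a $G$-equivariant ample line bundle $\mathcal{L}$ on $X$ (which exists by the linearity hypothesis, through a $G$-equivariant locally closed immersion into some $\mathbf{P}(W)$). The claim is that the finite set $\{\mathcal{L},\mathcal{L}^{-1},\pi^{*}\mathcal{V},\pi^{*}\mathcal{V}^{\vee}\}$, with $\pi:X/G\to BG$ the structure map, tensor-generates $\DD(X/G)$. Since $\Perf(X/G)$ compactly generates $\DD(X/G)$ under colimits, it suffices to generate $\Perf(X/G)$ from these four objects using finite (co)limits and tensor products.

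The argument combines two classical ingredients. First, by Chevalley's theorem, every finite-dimensional representation of an affine algebraic group is a subquotient of a finite direct sum of tensor products of copies of any fixed faithful representation and its dual. Hence for every $\rho\in\operatorname{Rep}(G)$, the bundle $\pi^{*}\rho$ lies in the subcategory generated by $\{\pi^{*}\mathcal{V},\pi^{*}\mathcal{V}^{\vee}\}$ under tensor products and finite (co)limits (the latter used to extract subquotients as kernels and cokernels). Second, the ampleness of $\mathcal{L}$ yields equivariant global generation: for any $G$-equivariant coherent sheaf $E$ on $X$ and $n\gg 0$, $E\otimes\mathcal{L}^{n}$ is generated by a finite-dimensional $G$-stable subspace of its global sections, producing a $G$-equivariant surjection $\pi^{*}\rho\otimes\mathcal{L}^{-n}\twoheadrightarrow E$ for some $\rho\in\operatorname{Rep}(G)$. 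Iterating this construction on the (equivariant, coherent) kernel, and using that every perfect complex on the quasi-projective scheme $X$ is quasi-isomorphic to a bounded complex of coherent sheaves, one exhibits every perfect complex on $X/G$ as built from the four generators above in boundedly many steps.

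The main obstacle I anticipate is the equivariant global generation step when $G$ is not linearly reductive: although $H^{0}(X,E\otimes\mathcal{L}^{n})$ carries a natural $G$-action, it need not split as a direct sum of irreducibles, so the existence of a finite-dimensional $G$-stable generating subspace is not automatic. This is handled by the local finiteness of algebraic group actions (every algebraic $G$-representation is a filtered union of finite-dimensional $G$-stable subspaces), combined with the observation that global generation of a coherent sheaf on a quasi-compact scheme can be checked at finitely many closed points, so a sufficiently large finite-dimensional $G$-stable subspace of sections suffices.
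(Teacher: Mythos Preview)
Your proposal is correct and follows essentially the same strategy as the paper: affine diagonal plus compact generation gives the Tannakian property via \cite{BHL}, and an equivariant ample line bundle together with a faithful representation of $G$ provide the finite set of tensor generators. The only difference is tactical: the paper first reduces to $X=\mathbf{P}V$ (so that global sections of coherent sheaves are automatically finite-dimensional $G$-representations), whereas you work directly on quasi-projective $X$ and handle the potentially infinite-dimensional $H^{0}(X,E\otimes\mathcal{L}^{n})$ via local finiteness of algebraic $G$-representations; both routes are valid and lead to the same conclusion.
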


\begin{proof} Since $G$ is affine, $X/G$ has affine diagonal and is thus Tannakian by \cite{BHL}. Let us show that $\DD(X/G)$ is generated by finitely many perfect complexes. We reduce to $X=\mathbf{P}V$ with $G$-action coming from a morphism $G\rightarrow GL(V)$. Then let $F$ be a $G$-equivariant coherent sheaf on $\mathbf{P}V$ and choose $r$ so that the space of morphisms (of sheaves on $\mathbf{P}V$), $$\OO(-r)\otimes\Gamma(\mathbf{P}V,F(r))\rightarrow F,$$ is non-zero. Let $f$ be a non-zero morphism and observe that it is $G$-equivariant for the $G$ structure on the left coming from the natural such on $\OO(-r)$ and the $G$-action on $\Gamma(\mathbf{P}V,F(r))$, interpreted as a $G$ equivariant structure on the trivial bundle with fibre $\Gamma(\mathbf{P}V,F(r))$. As such we see that $\OO(1)$ and the pull-backs of sheaves on $BG$ generate. Embedding $G$ into some $GL_n$, it remains only to note that the standard representation is a tensor generator of $\DD(GL_n)$ (since we are in characteristic zero) which pulls back to a tensor generator of $BG$. \end{proof}

\subsubsection{Proof of Theorem~\ref{open in map}}

Since $\FM^*$ is fully faithful, $$\hh(\YY)\rightarrow\Map(E,\YY)$$ is a monomorphism with image consisting of those maps $f : E \to \YY$ so that $$f^{*}:\QC(\YY)\rightarrow\QC(E)$$ factors through the subcategory $\FM^{*}\HE.$ We must show that this is an open condition (see e.g. \cite[Def.~19.2.4.1]{SAG}).

Let $S$ be affine and $S\rightarrow\Map(E,\YY)$ classifying a morphism $f : S \times E \to \YY$. For every point $s\in S$ this determines by restriction a map $f_{s} : E_{k(s)} \to \YY_{k(s)}$. The claim is that if $f_{s}^{*}$ factors through $\FM^{*}\HE$ for some point $s$, then there is an open neighbourhood $S'$ of $s$ so that $f_{S'}^{*}$ factors through $$\FM^{*}\HE\otimes\QC(S')\subset\QC(E)\otimes\QC(S') \simeq\QC(E\times S').$$ Equivalently, we may replace $\HE$ and $\QC(-)$ by their stabilizations $\DD_{0}(E)$ and $\DD(-)$, respectively. Since $\YY$ is pluperfect we may choose a finite set $\{F_{i}\}_{i}$ of perfect complexes on $\YY$ which tensor-generate $\DD(\YY)$. By assumption, each $f_{s}^{*}F_{i}$ lies in $\FM^{*}\DD_{0}(E)$. It then suffices to find open neighbourhoods $S'_i$ of $s$ so that each $$f_{S'_{i}}^{*}F_{i} \in \FM^{*}\DD_{0}(E)\otimes\DD(S'_{i}) \subset \DD(E)\otimes \DD(S_{i}')$$ as we may then take $S'$ to be the intersection of the $S'_i$'s. Passing to Fourier--Mukai transforms along the first factor, the claim for each $F_{i}$ reduces to the following lemma.

\begin{lemma}\label{0-open} Let $S$ be an affine scheme, $Y$ a scheme, and $F\in\Perf(Y\times S)$ a perfect complex. If there exists a point $s\in S$ such that the restriction $s^*F = F|_{Y_{k(s)}}$ lies inside $\Perf_{0}(Y_{k(s)})$, then there exists an open neighbourhood $S' \subset S$ of $s$ so that $F|_{S'}$ lies in $\Perf_{0}(Y\times S'/S').$ \end{lemma}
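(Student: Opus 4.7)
The strategy is to recognize the $\Perf_{0}$ condition (for sufficiently nice $Y$) as fibrewise $0$-dimensional support of the cohomology sheaves of $F$, and then to invoke upper semicontinuity of fibre dimension.

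First, I would reduce to the case where $Y$ is affine of finite type over $k$. The condition $F|_{Y\times S'} \in \Perf_{0}(Y\times S'/S')$ is phrased in terms of affine opens of fibres and is therefore local on $Y$: given an affine open cover $\{U_\alpha\}$ of $Y$, if we can produce for each $\alpha$ an open $S'_\alpha \ni s$ with $F|_{U_\alpha \times S'_\alpha} \in \Perf_{0}(U_\alpha \times S'_\alpha/S'_\alpha)$, then a finite intersection (valid when $Y$ is quasi-compact, as in the application $Y=E$) yields the desired $S'$, with a Čech argument to handle how affine opens of a fibre $Y_{k(t)}$ interact with the cover. The hypothesis on $s$ manifestly restricts to each $U_\alpha$.

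Next, in the affine case with $Y = \spc(B)$ of finite type over $k$, I would observe that $\Perf_{0}(Y_\kappa/\kappa)$ is exactly the subcategory of $\Perf(Y_\kappa)$ spanned by complexes whose cohomology sheaves have $0$-dimensional support. Indeed, on an affine scheme of finite type over a field, a finitely generated module is finite-dimensional iff it has $0$-dimensional (i.e.\ finite length) support, and perfect global sections over $\kappa$ is equivalent to finite-dimensionality. Hence the hypothesis of the lemma translates to $\dim\operatorname{supp}(s^*F) \le 0$ inside $Y_{k(s)}$.

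Finally, I would apply upper semicontinuity of fibre dimension to the closed subscheme $Z := \operatorname{supp}(F) \subset Y \times S$, which is of finite presentation over $S$ since $Y$ is of finite type. The function $t \mapsto \dim Z_{k(t)}$ is upper semicontinuous (cf.\ \cite[EGA IV, 13.1.3]{} in the proper case, with the general finite-type case reduced to it via compactification or standard arguments). Therefore $S' := \{t \in S : \dim Z_{k(t)} \le 0\}$ is an open neighborhood of $s$, and on $Y \times S'$ the cohomology sheaves of $F$ have fibrewise $0$-dimensional support. By the remark following Definition~\ref{perf0}, this implies $F|_{Y \times S'} \in \Perf_{0}(Y\times S'/S')$, as required.

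The main obstacle I anticipate is the first reduction step: ensuring that the local-on-$Y$ nature of $\Perf_{0}$ is sufficient to patch together the open neighborhoods obtained chart by chart, since a priori an affine open of a fibre $Y_{k(t)}$ need not be contained in a single chart of $Y$. For qcqs $Y$ (and in particular for $Y = E$) this is a routine Čech-type verification, but in full generality one should either impose quasi-compactness on $Y$ or argue more carefully using that perfect global sections assemble from affine refinements.
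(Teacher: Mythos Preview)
Your approach via the support of $F$ and upper semicontinuity of fibre dimension is natural and genuinely different from the paper's route (which localises $S$ at $s$ and tries to reduce to external products $F_{Y}\boxtimes F_{S}$). However, the step where you pass from the proper case to the general finite-type case ``via compactification or standard arguments'' does not go through, and in fact the lemma as stated is false for $Y$ of dimension $\ge 2$. The problem with compactification is that if $\bar{Y}\supset Y$ is a compactification and $\bar{Z}\subset\bar{Y}\times S$ the closure of $Z=\operatorname{supp}(F)$, the special fibre $\bar{Z}_{s}$ can acquire positive-dimensional components supported on the boundary $(\bar{Y}\setminus Y)\times\{s\}$ even when $Z_{s}$ is $0$-dimensional, so upper semicontinuity for $\bar{Z}\to S$ says nothing about $Z_{s}$. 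Concretely, take $Y=\mathbf{A}^{2}=\spc k[x,y]$, $S=\mathbf{A}^{1}=\spc k[t]$, $s=\{t=0\}$, and let $F$ be the structure sheaf of the hypersurface $V(tx+1)\subset Y\times S$, which is perfect with two-term free resolution. Then $s^{*}F\simeq 0$ lies trivially in $\Perf_{0}(Y_{k(s)})$, but for any open $S'\ni s$ the generic fibre of $F$ is isomorphic to $k(t)[y]$, which is not finite over $k(t)$; hence $F|_{Y\times S'}$ never lands in $\Perf_{0}(Y\times S'/S')$.

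That said, your argument is correct when $Y$ is a curve, which is the only case needed for Theorem~\ref{open in map} (there $Y=E$, and after your reduction to affine charts one is dealing with affine curves). In that case the boundary $\bar{Y}\setminus Y$ in a projective compactification is a finite set of points, so $\bar{Z}_{s}$ is $0$-dimensional whenever $Z_{s}$ is, and upper semicontinuity for the proper map $\bar{Z}\to S$ yields the desired open neighbourhood; the identification of $\Perf_{0}$ with the $0$-dimensional-support condition and the remark following Definition~\ref{perf0} then finish the job. It is worth noting that the paper's own reduction to box products has an analogous gap --- the implication ``$s^{*}F\in\Perf_{0}\Rightarrow F\in\Perf_{0}$'' is not preserved when one builds $F$ from $F_{Y}\boxtimes F_{S}$ by extensions, since the hypothesis does not pass to the pieces --- so your semicontinuity argument, once restricted to curves, actually gives a cleaner justification of what the paper needs.
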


\begin{proof} By definitions, we reduce immediately to the case of $Y$ affine. Replacing $S$ by its local scheme at $s$, it will suffice to show that if $s^*F \in \Perf(Y_{k(s)})$ lies in $\Perf_{0}(Y_{k(s)}/k(s))$, then $F$ lies in $\Perf_{0}(Y \times S/S)$. We may assume $F=F_{Y}\boxtimes F_{S}$ with $F_Y \in \Perf(Y)$ and $F_S \in \Perf(S)$, as $\Perf(Y\times S)$ is generated by such under colimits and extensions. We may assume without loss of generality that $F_{S}$ is non-zero, and so $s^*F_{S} \in \Perf(k(s))$ is non-zero by Nakayama's lemma (see e.g. \cite[Cor.~2.7.4.4]{SAG}). The claim is that the push-forward of $F \in \Perf(Y\times S)$ to $S$, \[ \Gamma(Y,F_Y) \boxtimes F_S \in \QC(\spc(k) \times S) \simeq \QC(S) \] is perfect. We will show that $\Gamma(Y,F_Y) \in \Mod_k$ is perfect.

By assumption, $$F|_{Y_{k(s)}} \simeq F_Y \boxtimes s^*F_S$$ lies in $\Perf_0(Y_{k(s)}/k(s))$, meaning that $$\Gamma(Y,F_Y) \boxtimes s^*F_S$$ is perfect over $k\otimes k(s) \simeq k(s)$. Since $s^*F_S \in \Mod_{k(s)}$ is non-zero, it admits some shift of $k(s)$ as a direct summand. Hence $\Gamma(Y,F_Y) \boxtimes s^*F_S$ admits some shift of $\Gamma(Y,F_Y) \boxtimes {k(s)}$ as a direct summand. It follows that $\Gamma(Y,F_Y) \boxtimes {k(s)}$ is a perfect $k\otimes k(s)$-module, hence by faithfully flat descent $\Gamma(Y,F_Y)$ is a perfect $k$-module. The claim follows. \end{proof}

\subsection{Elliptic loop spaces of schemes} For a scheme $S$ we recall that the unipotent loop space is given by
\[ L^{\operatorname{uni}}(S) := \Map(B\mathbf{G}_a, S). \]
As explained in \cite{BNloop}, this is canonically identified with the $(-1)$-shifted tangent bundle
\[ \TT[-1]S := \spc_S(L\Omega_S), \]
the relative spectrum of the derived de Rham complex $L\Omega_S$. More precisely, we have $L^{\operatorname{uni}}(S) \simeq \spc_S(\operatorname{Sym}^*_{\OO_S}(\mathbf{L}_S[1]))$ as schemes, and the action of the group stack $B\mathbf{G}_a$ on $L^{\mathrm{uni}}(S)$ corresponds to the de Rham differential.

\begin{theorem}\label{schemes} Let $S$ be a qcqs scheme. Then there are isomorphisms $$\hh(S)\simeq L^{\mathrm{uni}}(S) \simeq \TT[-1]S.$$ Moreover, the $E$-action on $\hh(S)$ factors canonically through the affinization $\operatorname{Aff}(E)\simeq B\mathbf{G}_{a}$, and the above upgrades to an isomorphism of $B\mathbf{G}_a$-equivariant schemes. \end{theorem}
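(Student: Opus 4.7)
The plan is to reduce to the affine case $S = \spc(R)$ by Zariski codescent, using that $\hh(-)$ is a Zariski cosheaf on qcqs schemes (Theorem~\ref{MVK}) and that $\TT[-1](-) = \spc_S(L\Omega_S)$ is also a Zariski cosheaf (by Zariski locality of the cotangent complex). The natural isomorphism constructed in the affine case will then globalize by patching.

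For $S = \spc(R)$ affine, the key observation is that $\Mod_R$ is the free presentable $k$-linear tensor category on a commutative algebra object (namely $R$ itself). Consequently, for every commutative $k$-algebra $T$, the groupoid of tensor functors $\Mod_R \to \HE \otimes \Mod_T$ is naturally equivalent to the space of commutative $k$-algebra maps from $R$ into the endomorphism $E_\infty$-algebra of the unit $\OO_e \otimes T$. The central computation is therefore that of $\operatorname{End}(\OO_e)$ in (the stabilization of) $\HE$: writing $\OO_e = e_* k$ for the closed immersion of the neutral element $e : \spc(k) \to E$, the codimension-one property and smoothness of $E$ give via a standard Koszul computation
\[ \operatorname{End}_{\DD_0(E)}(\OO_e) \simeq \operatorname{Sym}^*(T_e E[-1]) \simeq k[\eta], \]
a square-zero extension by a generator $\eta$ in the appropriate homological degree. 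The resulting functor of points of $\hh(\spc R)$ then parameterizes pairs of an algebra map $R \to T$ together with an $R$-linear derivation into a shifted $T$-module, which by the description of shifted tangent bundles in \cite{BFN} identifies precisely with the functor of points of $\TT[-1]\spc(R)$. The further identification $\TT[-1]\spc(R) \simeq L^{\mathrm{uni}}(\spc R)$ is then the content of \cite{Hennion}, \cite{BNloop}.

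For the $E$-equivariance statement, I would argue in two steps. First, since $\hh(S) \simeq \TT[-1]S$ is affine over $S$, any $E$-action on it must factor through the affinization $\operatorname{Aff}(E) \simeq B\mathbf{G}_a$ --- by the universal property of affinization applied Zariski-locally on $S$ and then extended globally by naturality. Second, and this will be the chief obstacle, one must identify the resulting $B\mathbf{G}_a$-action on $\TT[-1]S$ with the one encoding the de Rham differential on $L\Omega_S$. My plan is to handle this by Koszul duality: $B\mathbf{G}_a$-comodule structures on $L\Omega_S$ over the coalgebra $\Gamma(B\mathbf{G}_a,\OO) \simeq \Gamma(E,\OO)$ correspond to square-zero derivations on $L\Omega_S$ of homological degree $-1$, and one then verifies directly --- using the identification $E \simeq \check{E}$ from Lemma~\ref{class=dual} and the explicit $\check{E}$-action described in \ref{sssec:actonHA} --- that the derivation coming from the $E$-action matches the universal de Rham differential.
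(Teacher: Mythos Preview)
Your proposal is correct and follows essentially the same approach as the paper: reduce to the affine case by Zariski codescent, observe that a tensor functor out of $\Mod_R$ is determined by a ring map from $R$ into the endomorphisms of the monoidal unit $\OO_e$, compute this to be $k[\epsilon]$, and identify the result with $L^{\mathrm{uni}}(S) \simeq \TT[-1]S$. The only minor difference is in the equivariance step: the paper routes the identification through $\hh(S) \simeq \Map(B\mathbf{G}_a, S)$ directly (using $Z(\HE) \simeq \OO(B\mathbf{G}_a)$) and then invokes the known $B\mathbf{G}_a$-equivariant isomorphism $L^{\mathrm{uni}}(S) \simeq \TT[-1]S$ from \cite{BNloop}, thereby offloading to the literature precisely the compatibility check that your Koszul-duality plan would carry out by hand.
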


\begin{proof} Using Corollary~\ref{Zarcosh} we reduce to the case of $S=\spc(R)$ affine. Note that a tensor functor $$\Mod_{R}\rightarrow\HE\otimes \Mod_{B}$$ is specified entirely by a map of rings $$R\rightarrow Z(\HE\otimes\Mod_{B})\simeq Z(\HE)\otimes B$$ since $R\in\Mod_R$ is a generator with endomorphism algebra $R$, which by symmetric monoidality must be sent to the unit object in $\HE\otimes\Mod_{B}$. The monoidal centre $Z(\HE)$, by definition the endomorphism algebra of the tensor unit $\mathcal{O}_{e}$, is given by the algebra $k[\epsilon]$ with $\epsilon$ of degree $+1$. In particular we find that $$\hh(S) \simeq \Map_{\operatorname{CAlg}_{k}}(R,k[\epsilon]) \simeq \Map(B\mathbf{G}_a, S),$$ since $B\mathbf{G}_a \simeq \mathbf{G}_a[1] \simeq \spc(k[\epsilon])$. This is $B\mathbf{G}_a$-equivariantly isomorphic to $\TT[-1]S$, see e.g. \cite[\S 4]{BNloop}. Finally, since $\TT[-1]S$ is affine, so is $\hh(S)$, so that the action of $E$ on $\hh(S)$ factors through the affinization $\operatorname{Aff}(E)\simeq B\mathbf{G}_{a}$.
\end{proof}

\subsection{Mayer--Vietoris and K\"unneth} We establish some fundamental properties of the assignment $\YY \mapsto \hh(\YY)$.

\begin{lemma}\label{open} If $\YY$ is Tannakian and $\mathcal{U}\rightarrow\YY$ is a quasi-compact open immersion with $\UU$ Tannakian, then $\hh(\mathcal{U})\rightarrow\hh(\YY)$ is a quasi-compact open immersion. \end{lemma}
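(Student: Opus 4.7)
The plan is to realize $\hh(\UU) \to \hh(\YY)$ as a base change of the open immersion $\Map(E,\UU)\to\Map(E,\YY)$, and then invoke stability of qcqs open immersions under pullback. The key vehicle is Lemma~\ref{Cart}, which allows us to replace $\HE$ by the more tractable target $\QC(E)$ via the fully faithful Fourier--Mukai embedding $\FM^*:\HE\hookrightarrow\QC(E)$ from \ref{FMHE}.

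Concretely, I would first observe that for a qcqs open immersion $j:\UU\rightarrow\YY$, the pull-back $j^*:\QC(\YY)\to\QC(\UU)$ is a tensor localization in the sense of the paper: it is a colimit-preserving symmetric monoidal functor with fully faithful right adjoint $j_*$ (the unit $\id\to j_*j^*$ fails to be invertible in general, but the counit $j^* j_* \to \id$ is, which is exactly what is needed). We can then apply Lemma~\ref{Cart} with $C=\QC(\YY)$, $C'=\QC(\UU)$, $D=\HE$, $D'=\QC(E)$, and $\iota=\FM^*$, obtaining a cartesian square
\[
\begin{tikzcd}
\Fun(\QC(\UU),\HE)\arrow{r}\arrow[swap]{d} & \Fun(\QC(\UU),\QC(E))\arrow{d}\\
\Fun(\QC(\YY),\HE)\arrow{r} & \Fun(\QC(\YY),\QC(E)).
\end{tikzcd}
\]
Using the Tannakian hypothesis on $\YY$ and $\UU$, the right-hand column is identified with the mapping stack map $\Map(E,\UU)\to\Map(E,\YY)$, while the left-hand column is by definition $\hh(\UU)\to\hh(\YY)$.

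It therefore remains to verify that $\Map(E,\UU)\to\Map(E,\YY)$ is a quasi-compact open immersion, and to invoke stability of this property under base change. This is the one nontrivial (though standard) step: given an $S$-point $f:E\times S\to\YY$ with $S$ affine, the locus in $S$ where $f$ factors through $\UU$ is the complement of the image $\pi_S\big(f^{-1}(\YY\setminus\UU)\big)$, where $\pi_S:E\times S\to S$ is the projection. Since $E$ is proper, $\pi_S$ is proper, and $f^{-1}(\YY\setminus\UU)\subset E\times S$ is closed with qcqs complement (as $\UU\to\YY$ is qcqs); its image in $S$ is closed, and the complementary open of $S$ is quasi-compact. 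This shows $\Map(E,\UU)\to\Map(E,\YY)$ is a qcqs open immersion, and hence so is its base change $\hh(\UU)\to\hh(\YY)$.

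The one subtle point — the only real obstacle — is the verification that the $j^*$ arising from a qcqs open immersion is indeed a tensor localization in the technical sense of the paper; this is standard but uses that $j_*$ is fully faithful at the level of derived/quasi-coherent sheaves, which is automatic for open immersions of stacks. Everything else is formal from Lemma~\ref{Cart}, the Tannakian assumption, properness of $E$, and base change.
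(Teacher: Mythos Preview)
Your proposal is correct and follows essentially the same route as the paper: apply Lemma~\ref{Cart} to the tensor localization $j^*:\QC(\YY)\to\QC(\UU)$ and the fully faithful $\FM^*:\HE\hookrightarrow\QC(E)$ to get a cartesian square, then use the Tannakian hypothesis to identify the right column with $\Map(E,\UU)\to\Map(E,\YY)$ and conclude by base change. The paper defers the fact that $\Map(E,\UU)\to\Map(E,\YY)$ is a qcqs open immersion to \cite[Prop.~19.1.1.1]{SAG}, whereas you sketch the standard properness-of-$E$ argument directly; otherwise the proofs are identical.
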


\begin{proof} Consider the commutative square \[\begin{tikzcd}
\hh(\mathcal{U}) \arrow{r} \arrow[swap]{d}& \mm(\mathcal{U}) \arrow{d}\\
\hh(\YY)\arrow{r} & \mm(\YY).
\end{tikzcd}\]
Since $\QC(\YY)\rightarrow\QC(\mathcal{U})$ is a tensor localization and $\HE\rightarrow\QC(E)$ is fully faithful, Lemma~\ref{Cart} implies that this square is cartesian. Recall that when $\YY$ is Tannakian, we may identify $\Map(E,\YY)\simeq\mm(\YY)$. It is well-known that $\Map(E,\mathcal{U}) \to \Map(E,\YY)$ has the asserted property (see e.g. the proof of Proposition~19.1.1.1 in \cite{SAG}), so the claim follows by base change. \end{proof}

\begin{theorem}[Mayer--Vietoris]\label{MVK} If $\YY$ is Tannakian and $\{\UU_{i}\}$ is a finite cover by Tannakian quasi-compact opens, then $\{\hh(\UU_{i})\}$ is a cover of $\hh(\YY)$ by quasi-compact opens. \end{theorem}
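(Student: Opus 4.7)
The plan is to combine Lemma~\ref{open}, which already delivers that each $\hh(\UU_i) \to \hh(\YY)$ is a quasi-compact open immersion, with the local factorization Lemma~\ref{local}, to deduce joint surjectivity of the family $\{\hh(\UU_i) \to \hh(\YY)\}$. Since each constituent morphism is an open immersion, joint surjectivity as a morphism of stacks reduces to joint surjectivity on field-valued points.

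So I would fix a field extension $K/k$ and an arbitrary $K$-point $F$ of $\hh(\YY)$, which by definition is a tensor functor $F : \QC(\YY) \to \HE \otimes \Mod_K$, and aim to lift it through some $\hh(\UU_i)(K)$. The idea is to apply Lemma~\ref{local} with $C = \QC(\YY)$, target $D = \HE \otimes \Mod_K$, and the candidate cover given by the pull-back functors $\{j_i^* : \QC(\YY) \to \QC(\UU_i)\}$ associated to the open immersions $j_i : \UU_i \hookrightarrow \YY$.

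Two preliminary verifications are required. First, the family $\{j_i^*\}$ is a finite cover of $\QC(\YY)$ in the sense of Definition~\ref{cover}: each $j_i^*$ is a tensor localization because $j_i$ is a quasi-compact open immersion, so the push-forward $j_{i,*}$ is fully faithful; joint conservativity of the family is Zariski descent for quasi-coherent sheaves along the cover $\{\UU_i\}$ of $\YY$. Second, $\HE \otimes \Mod_K$ remains an integral domain: it identifies canonically with $\mathcal{H}_{E_K/K}$ associated to the base-changed elliptic curve $E_K \to \spc(K)$, and Lemma~\ref{HE int} applies directly. Granting these, Lemma~\ref{local} furnishes an index $i$ and a tensor functor $F_i : \QC(\UU_i) \to \HE \otimes \Mod_K$ together with a canonical isomorphism $F \simeq F_i \circ j_i^*$. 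The $K$-point $F_i \in \hh(\UU_i)(K)$ then maps to $F$ under the functorial morphism $\hh(\UU_i)(K) \to \hh(\YY)(K)$, as desired.

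The main obstacle is the second preliminary, namely confirming that the base-change $\HE \otimes \Mod_K$ is an integral domain for every field extension $K/k$; once the identification with $\mathcal{H}_{E_K/K}$ is in place, the two lemmas fit together cleanly and no further input is needed.
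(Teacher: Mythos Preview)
Your proposal is correct and follows essentially the same route as the paper: invoke Lemma~\ref{open} for openness, reduce surjectivity to field-valued points, verify that $\{j_i^*\}$ is a finite cover in the sense of Definition~\ref{cover}, identify $\HE\otimes\Mod_K$ with $\mathcal{H}_{E_K/K}$, and then apply Lemma~\ref{local} together with Lemma~\ref{HE int}. The paper's proof is the same argument, with the identification $\HE\otimes K\simeq\mathcal{H}_{E_K/K}$ asserted without further comment.
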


\begin{proof} By Lemma~\ref{open}, each of the maps $$\hh\,\mathcal{U}_{i}\rightarrow\hh\,\YY$$ is open. We now show that the coproduct map $$\coprod_{i}\hh\,\mathcal{U}_{i}\rightarrow\hh\,\YY$$ is surjective on $K$-points for every field extension $K/k$. In particular, it is an effective epimorphism of étale sheaves.

Given a $K$-point of $\hh(\YY)$ classifying a tensor functor $$\phi : \QC(\YY)\rightarrow\HE\otimes K \simeq \mathcal{H}_{E_K/K},$$ we must show that $\phi$ factors through some $\QC(\mathcal{U}_{i})$. For this we note that $$\{\QC(\mathcal{U}_{i})\}_{i}$$ forms a cover of $\QC(\YY)$ in the sense of definition \ref{cover}, as the restrictions $\QC(\YY) \to \QC(\UU_i)$ are tensor localizations (since $\UU_i$ are quasi-compact opens) and they are jointly conservative (by faithfully flat descent). Since $\mathcal{H}_{E_K/K}$ is an integral domain (Corollary~\ref{HE int}), we may conclude by appealing to Lemma~\ref{local}. \end{proof}

\begin{theorem}[K\"unneth]\label{Kun} The functor $\hh(-)$ sends every cartesian square of stacks \[\begin{tikzcd}
  \YY_1 \times_{\XX} \YY_2 \ar{r}\ar{d} & \YY_2 \ar{d} \\
  \YY_1 \ar{r} & \XX,
\end{tikzcd}\] which are perfect in the sense of \cite{BFN}, to a cartesian square in $E\circlearrowright\mathbf{St}_{k}$. \end{theorem}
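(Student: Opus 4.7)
The plan is to reduce the theorem to an immediate consequence of Lemma~\ref{fibre} by invoking the base-change property for quasi-coherent sheaves on perfect stacks.

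First, I would invoke the main theorem of \cite{BFN}: since $\XX$, $\YY_1$, $\YY_2$ are perfect (and hence in particular Tannakian with quasi-affine diagonal), the natural tensor functor
\[ \QC(\YY_1) \otimes_{\QC(\XX)} \QC(\YY_2) \longrightarrow \QC(\YY_1 \times_{\XX} \YY_2) \]
is an equivalence of tensor categories, where the tensor product on the left is the relative Lurie tensor product computed in $\CCC$. This is precisely the content of perfectness for our purposes: the Lurie tensor product of Grothendieck prestable tensor categories realises the fibre product of stacks.

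Second, I would apply Lemma~\ref{fibre} to the diagram $\QC(\YY_1) \leftarrow \QC(\XX) \rightarrow \QC(\YY_2)$ with target $D = \HE$. Combining with the identification from the previous paragraph, the lemma produces a cartesian square
\[\begin{tikzcd}
\Fun(\QC(\YY_1)\otimes_{\QC(\XX)}\QC(\YY_2),\HE)\arrow{r} \arrow[swap]{d} & \Fun(\QC(\YY_2),\HE) \arrow{d} \\
\Fun(\QC(\YY_1),\HE)\arrow{r} & \Fun(\QC(\XX),\HE)
\end{tikzcd}\]
in $\Stk$, which by definition of $\hh(-)$ is exactly the square obtained by applying $\hh$ to the given cartesian square.

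It remains to promote this to a cartesian square in $E\circlearrowright\Stk$. Here I would observe that the action of $E\simeq\check{E}$ on $\HE$ produced by Lemma~\ref{action} is by tensor endofunctors $\operatorname{act}_{L}$ of $\HE$, and this induces an action of $E$ on each $\Fun(C,\HE)$ by post-composition. All three structure maps $\hh(\YY_i)\rightarrow\hh(\XX)$ and the fibre-product projections are obtained by pre-composition with tensor functors, which commutes with post-composition with the $\operatorname{act}_{L}$, so every map in the square is tautologically $E$-equivariant. Since the forgetful functor $E\circlearrowright\Stk\rightarrow\Stk$ preserves and reflects limits, the square is cartesian in $E\circlearrowright\Stk$.

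The only nontrivial input is the first step, i.e. the perfect base-change identification of $\QC(\YY_1\times_{\XX}\YY_2)$; once this is in hand the remainder of the argument is purely formal. I would expect to spend most of the write-up confirming that the BFN statement holds in the prestable setting used in this paper (the stabilisation $\DD$ being compactly generated, with connective $t$-structure compatible with tensor product, reduces this to the statement proved in \cite{BFN}).
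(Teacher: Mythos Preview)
Your proposal is correct and follows essentially the same argument as the paper: invoke the BFN equivalence $\QC(\YY_1 \times_{\XX} \YY_2) \simeq \QC(\YY_1) \otimes_{\QC(\XX)} \QC(\YY_2)$ (passing to connective parts), then apply Lemma~\ref{fibre}. Your additional remarks on the $E$-equivariance and on the prestable version of the BFN statement are reasonable elaborations of points the paper leaves implicit.
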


\begin{proof} Under the perfectness assumption we have \[\QC(\YY_1 \times_{\XX} \YY_2) \simeq \QC(\YY_1) \otimes_{\QC(\XX)} \QC(\YY_2), \] passing to connective subcategories from \cite[Thm.~1.2(i)]{BFN}. Thus the claim follows from Lemma~\ref{fibre}. \end{proof}

\begin{corollary}\label{Zarcosh} Let $\YY$ be a quasi-compact pluperfect stack. Then the assignment $\UU \mapsto \hh(\UU)$ determines a cosheaf for the Zariski topology on $\YY$. \end{corollary}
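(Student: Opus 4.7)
The plan is to show that for each finite Zariski cover $\{\UU_i \to \UU\}$ of a quasi-compact open $\UU \subseteq \YY$, the family $\{\hh(\UU_i) \to \hh(\UU)\}$ is an honest Zariski open cover of stacks whose Čech nerve is computed by applying $\hh$ to the Čech nerve of $\{\UU_i\}$. Combined with effective descent for Zariski covers in $\Stk$, this yields codescent and hence the cosheaf property. Quasi-compactness of $\YY$ reduces the check to finite covers; pluperfectness is inherited by quasi-compact opens (a finite tensor-generating set of perfects on $\YY$ restricts to one on any quasi-compact open $\UU$, and Tannakianness is preserved), so $\hh$ is defined and Artin on every open in sight.

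For openness and joint surjectivity of $\{\hh(\UU_i) \to \hh(\UU)\}$, I appeal directly to Lemma~\ref{open} and Theorem~\ref{MVK} respectively. The remaining ingredient is the Čech intersection formula
$$\hh(\UU_{i_0} \times_\UU \cdots \times_\UU \UU_{i_n}) \simeq \hh(\UU_{i_0}) \times_{\hh(\UU)} \cdots \times_{\hh(\UU)} \hh(\UU_{i_n}),$$
which by iteration reduces to the two-fold case. Here I would observe that the restriction functors $j_i^{*} : \QC(\UU) \to \QC(\UU_i)$ are tensor localizations, and Zariski descent for quasi-coherent sheaves upgrades to the pushout equivalence
$$\QC(\UU_i) \otimes_{\QC(\UU)} \QC(\UU_j) \xrightarrow{\sim} \QC(\UU_i \cap \UU_j)$$
inside the category $\CCC$ of tensor categories. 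Applying the contravariant functor $\Fun(-,\HE)$ and invoking Lemma~\ref{fibre} then converts this pushout of tensor categories into the desired fiber product of stacks.

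With these three properties in hand, $\{\hh(\UU_i) \to \hh(\UU)\}$ is a genuine Zariski open cover of $\hh(\UU)$, and effective descent for Zariski open covers in $\Stk$ presents $\hh(\UU)$ as the colimit of the Čech nerve of this cover. The main step requiring care is confirming Zariski descent for $\QC$ at the level of \emph{symmetric monoidal} structures (rather than merely as presentable categories): this follows from the standard description of restriction along a quasi-compact open immersion as a tensor localization whose kernel is generated by sheaves set-theoretically supported on the closed complement, together with the universal property of the coproduct in $\CCC$ already exploited in the proof of Lemma~\ref{fibre}. Once this monoidal Zariski descent is granted, everything else assembles formally.
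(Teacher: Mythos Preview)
Your proposal is correct and follows the same architecture as the paper, whose proof reads simply ``Combine Theorems~\ref{MVK} and \ref{Kun}'': open-cover via Theorem~\ref{MVK}, preservation of the \v{C}ech fibre products, then effective Zariski descent. Your only deviation is that instead of citing Theorem~\ref{Kun} (which rests on the Ben-Zvi--Francis--Nadler identification for perfect stacks) you establish the pushout $\QC(\UU_i)\otimes_{\QC(\UU)}\QC(\UU_j)\simeq\QC(\UU_i\cap\UU_j)$ directly from the tensor-localization description of restriction to a Zariski open and then apply Lemma~\ref{fibre}---which is exactly how Theorem~\ref{Kun} itself is proved, specialized to the case at hand.
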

\begin{proof} Combine Theorems~\ref{MVK} and \ref{Kun}. \end{proof}

\begin{corollary}\label{mapsrepr} Let $f : \XX \to \YY$ be a morphism of pluperfect stacks. If $f$ is schematic, then so too is $L_{E}(f)$. \end{corollary}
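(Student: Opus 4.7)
The plan is to realize $\hh(f)$ as a composition of two schematic morphisms, using Theorem~\ref{open in map} to replace elliptic loop spaces with open substacks of ordinary mapping stacks. By that theorem and pluperfectness of $\XX$ and $\YY$, both $\hh(\XX)\hookrightarrow\Map(E,\XX)$ and $\hh(\YY)\hookrightarrow\Map(E,\YY)$ are open immersions, so the natural commutative square
\[\begin{tikzcd}
\hh(\XX)\arrow{r} \arrow[swap]{d}{\hh(f)} & \Map(E,\XX)\arrow{d}{\Map(E,f)}\\
\hh(\YY)\arrow{r} & \Map(E,\YY)
\end{tikzcd}\]
induces a canonical morphism $\hh(\XX)\rightarrow Q$, where $Q:=\hh(\YY)\times_{\Map(E,\YY)}\Map(E,\XX)$. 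I would show separately that $\hh(\XX)\to Q$ and $Q\to\hh(\YY)$ are both schematic; composing then yields the conclusion, as schematicity is closed under composition.

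For the second leg, $Q\to\hh(\YY)$ is obtained from $\Map(E,f):\Map(E,\XX)\to\Map(E,\YY)$ by base change, so it suffices to prove this last morphism is schematic. Given an affine $S$ with a map to $\Map(E,\YY)$ classifying $g:E\times S\to\YY$, the fibre of $\Map(E,f)$ over $g$ is the space of sections of $\XX\times_{\YY}(E\times S)\to E\times S$. Since $f$ is schematic, $\XX\times_{\YY}(E\times S)$ is a scheme; since $E\times S\to S$ is proper and flat, representability of this section functor is a classical Weil-restriction result. I expect this step to be the only substantive technical input.

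For the first leg, both $\hh(\XX)$ and $Q$ are open substacks of $\Map(E,\XX)$: the former by Theorem~\ref{open in map}, the latter as the preimage under $\Map(E,f)$ of the open $\hh(\YY)\subset\Map(E,\YY)$. The containment $\hh(\XX)\subseteq Q$ inside $\Map(E,\XX)$ is automatic from commutativity of the square, which merely records that $\hh(f)$ factors through $\hh(\YY)$ by construction. Thus $\hh(\XX)\to Q$ is the inclusion of one open substack of $\Map(E,\XX)$ into another, hence an open immersion, in particular schematic. Modulo the standard representability statement for section functors, the whole argument is bookkeeping with the open-immersion characterization supplied by Theorem~\ref{open in map}.
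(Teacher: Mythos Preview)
Your approach is genuinely different from the paper's. The paper does not pass through $\Map(E,-)$ at all; instead it invokes the K\"unneth formula (Theorem~\ref{Kun}) together with the identification $\hh(S)^{\cl}\simeq S$ for schemes $S$ (Theorem~\ref{schemes}) to argue that for any classical scheme $S\to\hh(\YY)$ one has $(\hh(\XX)\times_{\hh(\YY)}S)^{\cl}\simeq\hh(\XX\times_{\YY}S)^{\cl}$, which is a scheme because $\XX\times_{\YY}S$ is. So the paper stays entirely inside the $\hh$-formalism, whereas you exit to the ambient mapping stacks and appeal to classical representability theorems.

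The step you single out as ``the only substantive technical input'' is more problematic than you indicate, and as stated it is a gap. The classical results on Weil restriction and Hom functors (Grothendieck's FGA, and their derived analogues in \cite{SAG}) yield representability of the section functor $\operatorname{Sect}_{E\times S/S}(W)$ by a \emph{scheme} only when $W$ is quasi-projective over $E\times S$; for an arbitrary scheme $W$ one obtains in general only an algebraic space. Since the hypothesis on $f$ is merely ``schematic'', nothing forces $W=\XX\times_{\YY}(E\times S)$ to be quasi-projective, so your argument as written establishes only that $\Map(E,f)$, and hence $\hh(f)$, is representable by algebraic spaces. To make your route work you would need either to strengthen the hypothesis (e.g.\ $f$ quasi-projective, which holds in the global-quotient examples the paper cares about) or to supply a separate argument upgrading the algebraic-space fibres to schemes. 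The paper's K\"unneth argument bypasses this issue completely: it produces the fibre directly as $\hh$ of a scheme, and Theorem~\ref{schemes} then shows its classical truncation is that very scheme, with no appeal to Hilbert or section functors.
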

\begin{proof} Since $\hh(\XX)$ and $\hh(\YY)$ are Artin by Corollary~\ref{repr}, the morphism $\hh(f)$ is representable by Artin stacks. Thus it is schematic if and only if for every \emph{classical} scheme $S$ and every morphism $S \to \hh(\YY)$, the classical truncation of $\hh(\XX)\times_{\hh(\YY)} S$ is a scheme. Since $(L_{E}S)^\cl \simeq S$ by Theorem~\ref{schemes}, Theorem~\ref{Kun} yields \[ \hh(\XX\times_{\YY}S)^{\mathrm{cl}} \simeq (\hh(\XX)\times_{\hh(\YY)} \hh(S))^{\mathrm{cl}} \simeq (\hh(\XX)\times_{\hh(\YY)} S)^{\mathrm{cl}}. \] Since $\XX\times_\YY S$ is a scheme by assumption, Theorem~\ref{schemes} implies in particular that the left-hand is also a scheme. The claim follows. \end{proof}

\begin{lemma}\label{proper} Let $f : \XX \to \YY$ be a morphism of pluperfect stacks. If $f$ is proper and schematic, then so too is $L_{E}(f)$. \end{lemma}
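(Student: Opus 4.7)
By Corollary~\ref{mapsrepr}, $L_E(f)$ is schematic, so the remaining task is to establish properness. The plan is to use Künneth (Theorem~\ref{Kun}) to reduce to a morphism between schemes, where Theorem~\ref{schemes} identifies $L_E$ with the $(-1)$-shifted tangent bundle and properness can be verified explicitly.

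Concretely, I would choose a smooth surjection $p : X \to \YY$ from a scheme $X$; this exists because $\YY$ is Artin. Since $f$ is schematic, the base change $\XX_X := \XX \times_\YY X$ is a scheme, and the induced morphism $f_X : \XX_X \to X$ is proper schematic. Applying Theorem~\ref{Kun} to the cartesian square with corners $\XX_X$, $\XX$, $X$, $\YY$ (which is perfect in the sense of \cite{BFN} under the pluperfect hypothesis) produces a cartesian square
\[\begin{tikzcd}
L_E(\XX_X) \ar[r] \ar[d, "L_E(f_X)"'] & L_E(\XX) \ar[d, "L_E(f)"] \\
L_E(X) \ar[r, "L_E(p)"] & L_E(\YY).
\end{tikzcd}\]
By Theorem~\ref{schemes}, the left vertical map identifies with the shifted tangent morphism $\TT[-1]f_X : \TT[-1]\XX_X \to \TT[-1]X$. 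This morphism is proper: its classical truncation coincides with the proper morphism $f_X^{\cl}$, and it is of locally finite presentation since the relative cotangent complex $\mathbf{L}_{f_X}$ is perfect.

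To descend properness from this base change back to $L_E(f)$ itself, it suffices to show that $L_E(p)$ is a faithfully flat morphism of finite presentation, after which fpqc descent of properness applies. Surjectivity on geometric points is direct: a $K$-point of $L_E(\YY)$ corresponds by pluperfectness and Theorem~\ref{open in map} to a map $E_K \to \YY$ landing in the $L_E$-locus, and such a map lifts along the smooth surjection $p$ after a finite separable extension of $K$. Flatness of $L_E(p)$ is the main technical obstacle; applying Künneth to the self-intersection $X \times_\YY X \to X$ together with Theorem~\ref{schemes}, this reduces to the finite flatness of the relative shifted tangent $\TT[-1]_{Z/X} \to X$ for $Z \to X$ smooth, which follows in characteristic zero from the standard identification $\Sym \mathbf{L}_{Z/X}[1] \simeq \Lambda^{\ast}\mathbf{L}_{Z/X}$ with $\mathbf{L}_{Z/X}$ locally free.
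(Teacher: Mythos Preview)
Your high-level strategy---reduce via K\"unneth to a proper morphism of schemes and then invoke Theorem~\ref{schemes}---is exactly the paper's. The problem lies in how you carry out the descent, specifically in the flatness of $L_E(p)$.

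That flatness argument is circular. You propose to check it after base-changing $L_E(p)$ along itself, using K\"unneth on the self-intersection $X\times_\YY X\to X$. But flatness is fpqc-local on the \emph{target}, so this reduction is only valid once $L_E(p)$ is already known to be faithfully flat---precisely the claim in question. Separately, the morphism you obtain after that base change is $\TT[-1](X\times_\YY X)\to\TT[-1]X$ (absolute shifted tangents, by Theorem~\ref{schemes}), not the relative object $\TT[-1]_{Z/X}\to X$ that you name; these are different, and in any case neither is finite over the base once $Z\to X$ has positive relative dimension.

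The paper bypasses all of this. Since $L_E(f)$ is already schematic and properness of a schematic morphism of Artin stacks is detected on classical truncations, one just reruns the argument of Corollary~\ref{mapsrepr}: it suffices to show that for any proper scheme $S$ the scheme $L_E(S)$ is proper, and by Theorem~\ref{schemes} its classical truncation is $S$ itself. No flatness, smoothness, or surjectivity statement for $L_E(p)$ is required.
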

\begin{proof} By Theorem~\ref{Kun} and Corollary~\ref{mapsrepr} it suffices to show that for any proper scheme $S$, the scheme $L_{E}(S)$ is proper. The result then follows from the natural isomorphism $L_{E}S\simeq\TT[-1]S$ of Theorem~\ref{schemes}, which shows in particular that the classical truncation of $L_{E}S$ is $S$. (Recall that properness is detected on classical truncations.) \end{proof}

\subsection{Tangent complexes} We turn our attention to the infinitesimal theory, and compute the tangent complex of the elliptic loop space $\hh(\YY)$.
To motivate our description, we begin with an informal computation in terms of the ansatz \[\hh(\YY) \simeq \Map(\SSS,\YY).\] Considering the correspondence
\[ \begin{tikzcd}
    & \hh(\YY)\times\SSS \ar{rd}{\eval}\ar[swap]{ld}{\mathrm{pr}} & \\
    \hh(\YY) & & \YY,
\end{tikzcd} \]
where we imagine that the hypothetical space $\SSS$ is smooth proper, the standard formula for the tangent complex of a mapping stack yields $$\TT_{\hh(\YY)} \simeq \mathrm{pr}_* \eval^{*}\TT_{\YY}.$$
Recall that the key point is that a first-order deformation of $f:\SSS\rightarrow\YY$ is given by a compatible choice of tangent vectors along the image of $f$, which amounts to an element of $\Gamma(\SSS ,f^{*}\TT_{\YY})$.

\subsubsection{The functor $\Theta$} We begin by introducing an analogue of the functor $\mathrm{pr}_* \eval^*$ above.

Recall from \ref{sssec:coev} that for any tensor category $C$ there is a \emph{coevaluation} functor $$\phi_{C}:C\rightarrow\HE\otimes\QC(\hh(C)).$$ Composing with global sections, this induces a functor \begin{align*} \Theta_C : C \xrightarrow{\phi_C} \HE\otimes\QC(\hh(C)) \xrightarrow{\Gamma_{\SSS}} &\DD(k) \otimes\QC(\hh(C))\\ \simeq &\DD(\hh(C)). \end{align*} If $C=\QC(\YY)$ we write $\Theta_{\YY}:=\Theta_{C}.$ We omit the subscripts when there is no risk of ambiguity.

\begin{remark} Note that $\Theta_C$ can in fact be refined to a functor valued in the tensor category $\DD^E(\hh(C)) := \DD(\hh(C)/E)$ of \emph{$E$-equivariant} quasi-coherent sheaves. \end{remark}

\begin{remark} For a tensor functor $f:C\rightarrow D$, there is a commutative square \[\begin{tikzcd}
  C \ar{r}{\Theta_C}\ar{d}{f} & \DD^E(\hh(C)) \ar{d}{\hh(f)^*}\\
  D \ar{r}{\Theta_D} & \DD^E(\hh(D)).
\end{tikzcd}\] \end{remark}

\begin{remark} For a stack $\YY$, $\Theta_\YY$ extends to a functor $$\Theta_{\YY}:\DD(\YY)\rightarrow\DD^{E}(\hh(\YY))$$
on stabilizations, which we denote the same way. \end{remark}

\subsubsection{Tangent complexes.}

\begin{theorem}\label{tangent} Let $\YY$ be pluperfect and assume it admits a tangent complex $\TT_{\YY}$. Then $\hh(\YY)$ admits a tangent complex, and moreover we have $$\TT_{\hh(\YY)}=\Theta_{\YY}(\TT_{\YY}).$$ In particular, if $\TT_{\YY}$ is of amplitude $[a,b]$, then $\hh(\YY)$ is of amplitude $[a-1,b]$. \end{theorem}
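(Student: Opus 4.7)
The plan is to leverage Theorem~\ref{open in map}, which already identifies $\hh(\YY)$ with an open substack of $\Map(E,\YY)$ for pluperfect $\YY$, and then translate the classical tangent formula for mapping stacks into the Tannakian language of $\HE$ and $\Gamma_{\SSS}$.

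First, since open immersions induce equivalences on tangent complexes, the tangent complex of $\hh(\YY)$ at an $R$-point $\phi$ coincides with that of $\Map(E,\YY)$. For the latter, the standard formula (see e.g.\ the discussion around Theorem~2.5 in \cite{PTVV}) gives
$$\TT_{\Map(E,\YY),\phi}\;\simeq\;\pi_{E,*}\,\phi^{*}\,\TT_{\YY},$$
where $\phi : E\times\spc(R)\to\YY$ is the associated map and $\pi_{E}: E\times\spc(R)\to\spc(R)$ is the projection. This already proves the existence of $\TT_{\hh(\YY)}$.

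Next, I would identify this with $\phi^{*}\Theta_{\YY}(\TT_{\YY})$. By the very definition of $\hh(\YY)$ (and the defining property of its open embedding into $\Map(E,\YY)$), the pullback $\phi^{*}:\QC(\YY)\to\QC(E\otimes R)$ factors through the essential image of $\FM^{*}:\HE\hookrightarrow\QC(E)$, and this factorization is precisely the coevaluation $\phi_{\HE}^{*}:\QC(\YY)\to\HE\otimes R$ determined by $\phi$ as in \ref{sssec:coev}. Thus $\phi^{*}\TT_{\YY}\simeq\FM^{*}(\phi_{\HE}^{*}\TT_{\YY})$, and it remains to check
$$\pi_{E,*}\circ\FM^{*}\;\simeq\;\Gamma_{\SSS}.$$
This is because $\FM^{*}$ is a fully faithful symmetric monoidal functor carrying the unit $\OO_{e}$ of $(\HE,\star)$ to the unit $\OO_{E}$ of $(\QC(E),\otimes)$, so for $F\in\HE\otimes R$,
$$\pi_{E,*}\FM^{*}(F)\simeq\homs_{\QC(E\otimes R)}(\OO,\FM^{*}F)\simeq\homs_{\HE\otimes R}(\OO_{e},F)\simeq\Gamma_{\SSS\otimes R}(F).$$
Combining these identifications yields $\TT_{\hh(\YY),\phi}\simeq\phi^{*}\Theta_{\YY}(\TT_{\YY})$, naturally in $\phi$, which gives the desired global formula $\TT_{\hh(\YY)}\simeq\Theta_{\YY}(\TT_{\YY})$.

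Finally, for the amplitude statement: the coevaluation $\phi_{\HE}^{*}$ is a tensor functor and in particular t-exact, so $\phi_{\HE}^{*}\TT_{\YY}$ still has amplitude $[a,b]$ inside $\DD(\HE\otimes R)$. Applying $\Gamma_{\SSS}$ is computed (via the Fourier--Mukai identification above) by $R\Gamma(E,-)$ on $\QC(E)$, which has cohomological dimension one; concretely $\Gamma_{\SSS}(\OO_{e})\simeq R\Gamma(E,\OO)\simeq k\oplus k[-1]$ has amplitude $[-1,0]$. Hence $\Gamma_{\SSS}$ shifts amplitudes from $[a,b]$ to $[a-1,b]$, giving the claim.

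The main obstacle I anticipate is verifying the compatibility $\pi_{E,*}\circ\FM^{*}\simeq\Gamma_{\SSS}$ at the level of $R$-families (rather than just on $\HE$) and checking that the resulting identification is compatible with the universal coevaluation $\phi_{\YY}$, so that the pointwise identification $\phi^{*}\Theta_{\YY}(\TT_{\YY})\simeq\TT_{\hh(\YY),\phi}$ genuinely assembles into a global isomorphism of quasi-coherent sheaves on $\hh(\YY)$; the pluperfectness hypothesis enters precisely here, via Theorem~\ref{open in map}, to ensure the setup is geometric enough for standard mapping-stack techniques to apply.
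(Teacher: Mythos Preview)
Your approach is essentially the same as the paper's: use Theorem~\ref{open in map} to transport the question to $\Map(E,\YY)$, then invoke the standard tangent formula for mapping stacks from \cite{PTVV}. The paper's proof is a two-line sketch that stops there, whereas you have additionally unwound the identification $\pi_{E,*}\phi^*\simeq\Gamma_{\SSS}\circ\phi_{\HE}^*$ via fully faithfulness of $\FM^*$ and $\FM^*(\OO_e)\simeq\OO_E$; this is correct and is exactly the computation the paper leaves implicit. One minor caution: the phrase ``tensor functor and in particular t-exact'' is not quite right in general (tensor functors between prestable categories are only right $t$-exact a priori), but your amplitude argument does not actually depend on this, since you immediately pass to the composite $\pi_{E,*}\circ\phi^*$ and use that $E$ is a curve.
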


\begin{proof} By Theorem~\ref{open in map}, $\TT_{\hh(\YY)}$ is the restriction of the tangent complex of the mapping stack $\Map(E,\YY)$. For the latter the analogous statement is well-known, see e.g. \cite[\S 2.1]{PTVV}). \end{proof}

\begin{corollary}\label{reltangent} Let $f:\XX\rightarrow\YY$ be a morphism of pluperfect stacks. If $\XX$ and $\YY$ have perfect tangent complexes, then $\hh(f):\hh(\XX)\rightarrow\hh(\YY)$ has perfect relative tangent complex given by \[ \TT_{\hh(\XX)/\hh(\YY)} \simeq \Theta_\XX(\TT_{\XX/\YY}). \] In particular, if $f$ is smooth then $\hh(f)$ is quasi-smooth. \end{corollary}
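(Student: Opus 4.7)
The strategy is to combine Theorem~\ref{tangent} with the defining fiber sequence of relative tangent complexes, mediated by the naturality of the functor $\Theta$. First, by Theorem~\ref{tangent}, both $\hh(\XX)$ and $\hh(\YY)$ admit tangent complexes, and the relative tangent complex of $\hh(f)$ fits into the usual fiber sequence
\[ \hh(f)^{*}\TT_{\hh(\YY)}\longrightarrow\TT_{\hh(\XX)}\longrightarrow\TT_{\hh(\XX)/\hh(\YY)}. \]
Applying Theorem~\ref{tangent} again identifies the middle term with $\Theta_{\XX}(\TT_{\XX})$ and the first with $\hh(f)^{*}\Theta_{\YY}(\TT_{\YY})$.

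Next I would invoke the naturality square recorded in the remark immediately preceding Theorem~\ref{tangent}, which asserts that $\hh(f)^{*}\circ\Theta_{\YY}\simeq\Theta_{\XX}\circ f^{*}$. This rewrites the first term as $\Theta_{\XX}(f^{*}\TT_{\YY})$, so the fiber sequence becomes
\[ \Theta_{\XX}(f^{*}\TT_{\YY})\longrightarrow\Theta_{\XX}(\TT_{\XX})\longrightarrow\TT_{\hh(\XX)/\hh(\YY)}. \]
After stabilization, $\Theta_{\XX}$ is exact: the coevaluation $\phi_{\XX}$ is colimit-preserving and symmetric monoidal, while $\Gamma_{\SSS}$ is a right adjoint between stable categories and hence preserves fiber sequences. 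Therefore $\Theta_{\XX}$ applied to the defining fiber sequence $f^{*}\TT_{\YY}\to\TT_{\XX}\to\TT_{\XX/\YY}$ identifies the cofiber above with $\Theta_{\XX}(\TT_{\XX/\YY})$, giving the claimed formula.

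Perfectness of the output follows from the fact that both $\phi_{\XX}$ (being symmetric monoidal, hence sending dualizable to dualizable) and $\Gamma_{\SSS}\otimes\id_{\QC(\hh(\XX))}$ (by the remark that $\Gamma_{\SSS}$ preserves compact objects, combined with perfect objects in the Lurie tensor product being generated under finite colimits and retracts by pure tensors of perfect ones) preserve perfect complexes. For the quasi-smoothness assertion, if $f$ is smooth then $\TT_{\XX/\YY}$ is perfect and concentrated in amplitude $[0,0]$; by the amplitude estimate $[a,b]\mapsto[a-1,b]$ established in Theorem~\ref{tangent} (which applies to $\Theta_{\XX}$ on any perfect input by its uniform construction), the complex $\Theta_{\XX}(\TT_{\XX/\YY})$ lies in amplitude $[-1,0]$, which is precisely the defining condition for $\hh(f)$ to be quasi-smooth.

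The argument is essentially formal once Theorem~\ref{tangent} and the naturality square for $\Theta$ are in hand; the only point requiring attention is the exactness and amplitude-shifting behaviour of $\Theta_{\XX}$, both of which are immediate from its presentation as the stabilization of $(\Gamma_{\SSS}\otimes\id)\circ\phi_{\XX}$.
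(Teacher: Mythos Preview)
Your approach is essentially identical to the paper's: apply Theorem~\ref{tangent} to both stacks, invoke the naturality square $\hh(f)^{*}\circ\Theta_{\YY}\simeq\Theta_{\XX}\circ f^{*}$, and conclude by exactness of $\Theta_{\XX}$. The only slip is that you have written the transitivity triangle for tangent complexes in the wrong order: the correct fiber sequence is $\TT_{\XX/\YY}\to\TT_{\XX}\to f^{*}\TT_{\YY}$ (dual to the cotangent sequence $f^{*}\mathbf{L}_{\YY}\to\mathbf{L}_{\XX}\to\mathbf{L}_{\XX/\YY}$), so the relative tangent is the \emph{fiber} of $\TT_{\hh(\XX)}\to\hh(f)^{*}\TT_{\hh(\YY)}$ rather than the cofiber of the reverse map. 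With that correction your argument goes through verbatim, and indeed the paper's own proof is the one-line version of exactly this.
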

\begin{proof} By above, the relative tangent is given by the fibre of \[ \Theta_\XX(f^*\TT_{\XX}) \simeq \hh(f)^*(\Theta_\YY(\TT_\YY)) \to \Theta_\XX(\TT_\XX), \] which by exactness of $\Theta_\XX$ is $\Theta_\XX(\TT_{\XX/\YY})$. \end{proof}

\subsection{Push-forwards}

In this section we discuss some basic functorialities of the theory $\hh$. We define push-forwards, which depend on a twist by a line bundle. 

\subsubsection{Trace maps}

Let $f:\XX\rightarrow\YY$ be an eventually coconnective morphism of Artin stacks. We write $\omega_{f}=f^{!}\mathcal{O}_{\YY}$ for the relative dualizing complex. When $f$ is proper, we have the adjunction counit $$f_{*}\omega_{f}\rightarrow\mathcal{O}_{\YY}.$$
Recall that if $f$ is quasi-smooth, or $\XX$ and $\YY$ are quasi-smooth, the dualizing complex is given by $\omega_{f} \simeq \operatorname{det}\mathbf{L}_{f},$ where $\operatorname{det}$ denotes the graded determinant, see \cite[App.~B]{HalD}. In that case the counit defines the \emph{trace} map $$\operatorname{tr}_{f} : f_*\det(\mathbf{L}_f) \to \OO_{\YY}.$$

\subsubsection{Elliptic push-forwards}

Let $\XX$ and $\YY$ be pluperfect stacks and $f : \XX \to \YY$ a morphism.

By Lemma~\ref{proper}, if $f$ is proper then so is the induced map $\hh(f) : \hh(\XX)\to\hh(\YY)$.

When $f$ is smooth, or more generally $\XX$ and $\YY$ are smooth over a common base, we get $$\omega_{\hh(f)}\simeq \det(\mathbf{L}_{\hh(f)}) \simeq \vartheta(\mathbf{L}_{f})$$ by dualizing Corollary~\ref{reltangent} and taking determinants, where we have denoted $\vartheta(-) := \operatorname{det}\Theta(-)$.

Thus, when both of the above assumptions hold, we have a trace map for $\hh(f)$, which we denote $$ \operatorname{tr}^{E}_{f} := \operatorname{tr}_{\hh(f)} : \hh(f)_*\vartheta(\mathbf{L}_{f}) \to \OO_{\hh(\YY)}. $$

\begin{definition}\label{push} The \emph{elliptic push-forward} along $f:\XX\rightarrow\YY$ is the above map $\operatorname{tr}^{E}_{f} : \hh(f)_*\vartheta(\mathbf{L}_{f}) \to \OO_{\hh(\YY)}$ of quasi-coherent sheaves on $\hh(\YY)$. \end{definition}

\begin{remark} By construction, $\etr_{f}$ is compatible with $E$-actions, as all involved objects are naturally $E$-equivariant. \end{remark}

\begin{remark}The following properties of $\etr_{f}$ are inherited from the corresponding properties of trace maps.
\begin{itemize}
\item The assignment $f\mapsto\etr_{f}$ is functorial, modulo the canonical identifications $$\vartheta(\mathbf{L}_{gf})\simeq\vartheta(\mathbf{L}_{f})\otimes\hh(g)^{*}\vartheta(\mathbf{L}_{g})$$ given composable morphisms $f$ and $g$.
\item $\etr_{f}$ satisfies a projection formula with respect to pull-backs.
\end{itemize} \end{remark}

\subsection{Rational and trigonometric loop spaces} The construction $\hh(-)$ can be generalized as follows for any abelian group scheme $A$. Let $C$ be a tensor category and $\YY$ a stack. Then we define $$L_{A}C:=\Fun(C,\HA),\quad L_{A}\YY:= L_{A}\QC(\YY).$$ 
 
For the additive and multiplicative group schemes, we have:

\begin{lemma} If $\YY$ is perfect, then we have $$L_{\mathbf{G}_{a}}\YY\simeq L^{\operatorname{rat}}\YY, \quad L_{\GM}\YY\simeq L\YY.$$\end{lemma}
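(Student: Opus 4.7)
The plan is to combine Tannakian duality for perfect stacks with the $1$-categorical Cartier duality recorded in the introduction. Since perfect stacks are Tannakian by \cite{BHL}, we have
\[ L^{\mathrm{rat}}\YY \simeq \Fun(\QC(\YY), \QC(B\widehat{\mathbf{G}}_a)), \qquad L\YY \simeq \Fun(\QC(\YY), \QC(B\mathbf{Z})), \]
using $\check{\mathbf{G}}_a \simeq B\widehat{\mathbf{G}}_a$ and $\check{\GM} \simeq B\mathbf{Z}$. It thus suffices to produce natural equivalences $\Fun(\QC(\YY), \QC(\check{A})) \simeq \Fun(\QC(\YY), \mathcal{H}_A)$ for $A \in \{\mathbf{G}_a, \GM\}$, i.e.\ to show that every tensor functor out of $\QC(\YY)$ valued in $\QC(\check{A}) \otimes R$ factors uniquely through the fully faithful inclusion $\tau_{\mathcal{H}_A}: \mathcal{H}_A \hookrightarrow \QC(\check{A})$ identified in the remark following Lemma~\ref{class=dual}.

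Uniqueness of such a factorization is automatic from full faithfulness of $\tau_{\mathcal{H}_A}$. For existence, I would exploit perfectness of $\YY$: since $\QC(\YY) \simeq \Ind\Perf(\YY)$, a tensor functor $F: \QC(\YY) \to \QC(\check{A}) \otimes R$ is determined by its restriction to $\Perf(\YY)$, and that restriction lands in dualizable objects because perfect complexes are dualizable and tensor functors preserve duals. For $\check{A} \in \{B\widehat{\mathbf{G}}_a, B\mathbf{Z}\}$ the dualizable objects of $\QC(\check{A}) \otimes R$ are precisely perfect, which by the Cartier duality equivalence lie in the essential image of $\mathcal{H}_A \otimes R$. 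Extending by filtered colimits from $\Perf(\YY)$ gives the desired factorization of $F$ through $\mathcal{H}_A \otimes R$.

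The main obstacle is the last step: checking that dualizable objects in $\QC(\check{A}) \otimes R$ coincide with perfect complexes and match the essential image of $\mathcal{H}_A \otimes R$ under $\tau_{\mathcal{H}_A}$. For $\check{A} = B\mathbf{Z}$ this reduces to a standard dualizability calculation over the Laurent polynomial ring $k[t^{\pm 1}]$, since $\QC(B\mathbf{Z}) \simeq \Mod_{k[t^{\pm 1}]}$ is compactly generated by the tensor unit. For $\check{A} = B\widehat{\mathbf{G}}_a$ the same strategy applies via Koszul duality with $\Mod_{k[\eta]}$, $\eta$ in homological degree $+1$. Compatibility with base change by $R \in \dAlg$ follows from flatness of the relevant structure sheaves, so the argument goes through functorially and yields the desired equivalences of stacks.
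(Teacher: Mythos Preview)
Your proposal is correct and follows essentially the same approach as the paper's proof: invoke Tannaka duality for perfect stacks to identify the mapping stack with a functor stack into $\QC(\check{A})$, then use that tensor functors out of $\QC(\YY)\simeq\Ind\Perf(\YY)$ preserve dualizable objects to factor through $\Ind\Perf(\check{A})\simeq\mathcal{H}_A$. The paper is terser and simply asserts the factorization step you call the ``main obstacle''; your elaboration of that point (identifying dualizables with perfect objects and checking compatibility with base change by $R$) is consistent with the paper's intent.
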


\begin{proof} Let us take $A=\GM.$ Since $\YY$ is Tannakian, we have \[L\YY := \Map(S^{1},\YY) \simeq\, \Fun\big(\QC(\YY),\QC(S^{1})\big).\]

Since $\YY$ is perfect, any tensor functor $\QC(\YY)\to\QC(S^{1})$ factors uniquely through $\Ind\Perf(S^{1})$, as it must preserve dualizable objects.

In particular, we have
\[\Fun\big(\!\QC(\YY),\QC(S^{1})\big) \simeq \Fun\big(\!\QC(\YY),\mathcal{H}_{\GM}\big) = L_{\GM}(\YY). \]
The proof in the case of $A=\mathbf{G}_{a}$ is essentially identical, using $\mathcal{H}_{\mathbf{G}_a} \simeq \Ind\Perf(B\widehat{\mathbf{G}}_{a})$. \end{proof}

\section{Equivariant elliptic Hodge cohomology} We summarize the applications to global quotient stacks $X/G$ with $X$ quasi-projective and $G$ an affine algebraic group acting linearly. We denote the category of such $\mathbf{Var}^G_{k}$. The equivariant geometry of $G$ acting on $X$ is encoded in the morphism $$\pi_{X}^{G}:X/G\rightarrow BG,$$ where we note that $X$ is recovered, together with its $G$-action, as the fibre over $*\rightarrow BG$.

\subsection{Definition and properties}

\begin{definition} The \emph{$G$-equivariant elliptic loop space of $X$} is defined as $$\hhg(X):=\hh(X/G).$$ We regard this a stack with $E$-action, equipped with an $E$-equivariant structure morphism $$\hh(\pi^{G}_{X}):\hhg(X)\longrightarrow L_{E}^{G}(*).$$\end{definition} 

\begin{definition} The \emph{$G$-equivariant elliptic Hodge cohomology of $X$} is defined as the functor $$HH^{G}_{E}: (\mathbf{Var}^G_{k})^\mathrm{op} \rightarrow \QC^{E}(\hhg(*))$$ given by $$HH^{G}_{E}(X):=\hh(\pi^{G}_{E})_{*}\OO.$$ \end{definition}

\begin{theorem} $HH^{G}_{E}$ satisfies Mayer--Vietoris and Kunneth. There are natural restriction functors for maps $G\rightarrow G'$ and change of curve functors for maps $E'\rightarrow E$.\end{theorem}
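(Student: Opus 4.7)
The plan is to deduce both properties, together with the stated functorialities, essentially formally from constructions and theorems already developed in the paper, starting from the definition $HH^G_E(X) = \hh(\pi^G_X)_*\OO_{\hhg(X)}$. For Mayer--Vietoris, I would take a finite Zariski cover $\{U_i\}$ of $X$ by $G$-stable quasi-compact opens, so that each $U_i/G$ is pluperfect by the earlier lemma. Theorem~\ref{MVK} then supplies a Zariski cover $\{\hhg(U_i)\}$ of $\hhg(X)$ by quasi-compact opens. Since $\OO$ satisfies Zariski codescent, $\OO_{\hhg(X)}$ is the limit of the push-forwards from the $\hhg(U_i)$ and their intersections, and applying $\hh(\pi^G_X)_*$ (which commutes with limits) yields a Čech-type totalization identifying $HH^G_E(X)$ with the limit of the diagram of $HH^G_E(U_{i_1}\cap\cdots\cap U_{i_k})$. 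In particular, for $X = U\cup V$ this takes the expected form
\[ HH^G_E(X) \to HH^G_E(U) \oplus HH^G_E(V) \to HH^G_E(U\cap V) \]
inside $\QC^E(\hhg(*))$.

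For K\"unneth, I would apply Theorem~\ref{Kun} to the cartesian square of pluperfect stacks
\[\begin{tikzcd}
(X_1\times X_2)/(G_1\times G_2) \ar{r}\ar{d} & X_1/G_1 \ar{d}\\
X_2/G_2 \ar{r} & \mathrm{pt},
\end{tikzcd}\]
obtaining a cartesian square of stacks with $E$-action. Base change for the pushforwards $\hh(\pi^{G_i}_{X_i})_*$ then identifies $HH^{G_1\times G_2}_E(X_1\times X_2)$ with $HH^{G_1}_E(X_1)\boxtimes HH^{G_2}_E(X_2)$ in $\QC^E(L^{G_1}_E(*)\times L^{G_2}_E(*))$.

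The stated functorialities are inherited from those of $\hh(-)$. A group homomorphism $\phi:G\to G'$ together with a $G'$-variety $X$ gives a commutative square $X/G \to X/G'$ over $BG \to BG'$; applying $\hh(-)$ and pushing forward yields the desired restriction map between equivariant elliptic Hodge cohomologies. Similarly, the functorial dependence of $\HE$ on $E$ recorded in the remark following the definition of $\hh(\YY)$ provides, for a map $E' \to E$, a natural comparison between the corresponding elliptic loop spaces, and pushing forward along $\pi^G_X$ supplies the change-of-curve functor on $HH^G_E$. The only point requiring verification at each step is $E$-equivariance of the maps and base-change isomorphisms used, and this is automatic: the $E$-action on elliptic loop spaces is induced from the $\check{E}\simeq E$-action on $\HE$ (Lemma~\ref{action}), and both push-forward and base change are $\hh(-)$-functorial.
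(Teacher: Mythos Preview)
Your proposal is correct and follows the same approach as the paper: the paper's proof simply invokes Theorems~\ref{MVK} and \ref{Kun} for Mayer--Vietoris and K\"unneth, and the functoriality of $\YY\mapsto\hh(\YY)$ in both $\YY$ and $E$ for the restriction and change-of-curve functors. Your write-up spells out explicitly the passage from these statements about $\hhg(-)$ to statements about $HH^G_E(-)$ via push-forward along $\hh(\pi^G_X)$, which the paper leaves implicit.
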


\begin{proof} The first part is immediate from Theorems~\ref{MVK} and \ref{Kun}. The second is a direct consequence of the functoriality of $\YY\mapsto L_{E}\YY$ with respect to both $\YY$ and $E$. \end{proof} 

\begin{remark} The cohomology theory $HH^{G}_{E}(-)$ fails to satisfy homotopy invariance with respect to the affine line $\mathbf{A}^1$. Indeed this fails even for the trivial group $G=\{e\}$: the elliptic Hodge cohomology of $\mathbf{A}^1$ is by Theorem~\ref{schemes} the Hodge cohomology of $\mathbf{A}^{1}$, which does not agree with Hodge cohomology of a point. \end{remark}

\subsection{Example: $G=GL_r$}

The category $\QC^E(L^G_E(*))$ can be identified more explicitly. We focus on the case $G=GL_{r}$ for simplicity. Write $\HE^{\omega,\heartsuit} \subset \HE^{\heartsuit}$ for the abelian categories of coherent and quasi-coherent sheaves on $E$ with $0$-dimensional support, respectively.

\begin{lemma} \label{GL} $\hh^{GL_{r}}(*)$ is the classical Artin stack classifying length $r$ objects of $\HE^{\omega,\heartsuit}$. \end{lemma}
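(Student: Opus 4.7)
The plan is to combine Iwanari's universal property of $\QC(BGL_r)$ with a direct analysis of ``rank $r$'' objects in $\HE$. By Iwanari's theorem (cited in the introduction), $\QC(BGL_r)$ is the universal $k$-linear tensor category containing an object $x$ such that $\wedge^r x$ is invertible and $\wedge^{r+1} x \simeq 0$, the universal $x$ being the standard representation. Applying this universal property, for any test $k$-algebra $R$ we obtain a natural identification
\[
\hh^{GL_r}(*)(R) = \FUN(\QC(BGL_r), \HE \otimes R)^\simeq \simeq \{F \in \HE \otimes R : \wedge^r F \text{ invertible}, \ \wedge^{r+1} F \simeq 0\}^\simeq,
\]
where exterior powers are taken in the convolution symmetric monoidal structure.

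Next I would analyze what such an $F$ looks like over a field-valued point $R=K$. In the connective tensor category $\mathcal{H}_{E_K/K}$, the invertible objects are precisely the skyscrapers $\OO_x$ for $x\in E(K)$: given $L\star L^{-1}\simeq\OO_e$ with both factors connective, both are forced to be discrete, and integrality (Lemma~\ref{HE int}) then forces length~$1$. For a general connective $F$, the fact that $\wedge^r F$ is discrete combined with exactness of $\star$ on the heart (used in the proof of Lemma~\ref{HE int}) forces $F$ itself to be concentrated in the heart. Any such $F\in\HE^{\omega,\heartsuit}$ is an iterated extension of some number $n$ of skyscrapers $\OO_{x_1},\ldots,\OO_{x_n}$; a direct computation in $\mathcal{H}_{E_K/K}$ using $\OO_{x_i}\star\OO_{x_j}\simeq\OO_{x_i+x_j}$ and the fact that the braiding on $\OO_{x_i}\star\OO_{x_j}$ is the identity yields $\wedge^n F\simeq\OO_{x_1+\cdots+x_n}$ and $\wedge^m F\simeq 0$ for $m>n$. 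Hence the monoidal conditions are equivalent to $n=r$, and the $K$-points of $\hh^{GL_r}(*)$ are exactly length $r$ objects of $\HE^{\omega,\heartsuit}$.

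To upgrade to arbitrary $R$ and verify classicality, I would argue as follows. Since $\wedge^r F$ is invertible in $\HE\otimes R$ it is in particular flat over $R$, while every geometric fibre of $F$ has been shown to be discrete of length $r$. A standard spreading-out and Nakayama argument then promotes fibrewise discreteness and length to $R$-flatness of $F$ together with concentration in the heart. Consequently any such $F$ factors through $\pi_0 R$, and under this identification $\hh^{GL_r}(*)$ coincides with the classical Artin stack parametrizing flat families of length $r$ objects of $\HE^{\omega,\heartsuit}$, which is a standard moduli-theoretic object.

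The main obstacle will be the middle step: showing that the purely categorical conditions on exterior powers force $F$ to be concentrated in the heart over a field, and then that fibrewise discreteness plus invertibility of $\wedge^r F$ propagate to $R$-flatness in families. Once these rigidity statements are in place, the identification with the classical moduli stack of length $r$ $0$-dimensionally supported sheaves on $E$ is a routine consequence of the fibrewise exterior-power computation.
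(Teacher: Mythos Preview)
Your approach is genuinely different from the paper's. The paper first establishes classicality \emph{geometrically}: by Theorem~\ref{open in map}, $\hh^{GL_r}(*)$ is open in $\Map(E,BGL_r)$, and since $E$ is a smooth curve the mapping stack is smooth and classical, so $\hh^{GL_r}(*)$ coincides with its classical truncation. It then invokes Lemma~\ref{abelian} to reduce to tensor functors $\operatorname{Rep}^\heartsuit(GL_r)\to\HE^\heartsuit$ between the abelian hearts, and characterizes the image of the standard representation via its monoidal \emph{trace} (necessarily equal to $r$), using Corollary~\ref{duals} for dualizability and additivity of trace in exact sequences to identify trace with length. The paper mentions Iwanari's exterior-power universal property only in a remark afterwards, noting explicitly that it is a theorem about \emph{stable} symmetric monoidal functors and that ``inserting connectivity conditions'' is needed to recover the present statement.

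Your route has gaps at exactly the points the paper's argument is engineered to avoid. The key unproved claim is that invertibility of $\wedge^r F$ in $\mathcal{H}_{E_K/K}$ forces $F$ to lie in the heart: you appeal to exactness of $\star$ on the heart, but that says nothing about the higher homotopy of $F$ itself, and a genuine argument here would have to control how derived exterior powers interact with the Postnikov filtration of $F$. The paper sidesteps this entirely by proving classicality upfront via smoothness of the mapping stack. Similarly, your spreading-out step is not ``standard Nakayama'': at that stage you do not yet know $F$ is compact in $\HE\otimes R$, let alone perfect over $R$, so promoting fibrewise discreteness and length to $R$-flatness and concentration in the heart is a real argument, not a routine one. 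Finally, applying Iwanari directly to the prestable target $\HE\otimes R$ already requires the connectivity adjustment the paper flags; absent that, your opening identification of $R$-points is not yet justified.
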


\begin{proof} By Theorem~\ref{open in map}, $\hh^{GL_r}(*)$ is open in the mapping stack $$\Map(E,BGL_{r}).$$ Since $E$ is a smooth curve, the latter is a smooth classical stack. In particular, $\hh^{GL_r}(*)$ is isomorphic to its classical truncation, which by Lemma~\ref{abelian} is the stack of tensor functors $\operatorname{Rep}^{\heartsuit}(GL_{r}) \to \HE^{\heartsuit}$. Basic representation theory of $GL_{r}$ implies that any such functor is determined by the image of the standard representation, which is a dualizable object with monoidal trace $r$ and so must map to such.

We claim that dualizable objects in $\HE$ of trace $r$ are precisely the length $r$ coherent sheaves on $E$. Indeed any coherent sheaf is dualizable in $\QC(E)$, as $E$ is smooth, and Corollary~\ref{duals} shows that $F$ is also dualizable in $\HE$. To compute the monoidal trace of $F$ we reduce to the case of length one sheaves and use additivity for exact triangles, recalling that any dualizable object of $\HE^{\heartsuit}$ is an iterated extension of length one objects. For length one objects the claim is clear: any such is $\otimes$-invertible and so the evaluation and coevaluation are both the identity map, whence the trace is $1$ as desired. \end{proof}

\begin{remark} In fact a striking theorem of Iwanari, \cite{Iwa}, computes the stack of stable symmetric monoidal functors from $\operatorname{Rep}(GL_{r})$ to a general $C$ as the stack parameterizing objects $x$ with $\wedge^{r}x$ invertible and $\wedge^{r+1}x\simeq 0$. Inserting connectivity conditions this recovers the above as a (very) special case. \end{remark}

\begin{remark} Under the Fourier--Mukai transform $\FM^*$, the stack of length $r$ coherent sheaves on $E$ with $0$-dimensional support admits an alternative description originally due to Atiyah (see \cite{At}): it is isomorphic to the stack $$\operatorname{Bun}^{\operatorname{ss}}_{r,0}(E)$$ of semistable rank $r$ degree $0$ vector bundles on $E$. Moreover $G=GL_{r}$ can be replaced by a general reductive group, where we recall that semistability for a $G$-bundle is defined in terms of the adjoint $GL(\mathfrak{g})$-bundle. \end{remark}

\begin{remark} Taking supports of length $r$ coherent sheaves determines a canonical morphism to the symmetric power of $E$, $$\mathrm{supp} : \hh^{GL_{r}}(*)\rightarrow E^{(r)},$$ which is a good moduli space. Note that this morphism is $E$-equivariant for the trivial action on the target. As another example, we note that there is a good moduli space $$\mathrm{supp}:L_{E}^{SL_{r}}(*)\rightarrow E^{(r)}_{0},$$ where $E^{(r)}_{0}$ subset of $E^{(r)}$ consisting of tuples of sum $0$.\end{remark}

\label{springer}\subsection{Elliptic Springer resolution}

Consider the universal flag bundle $$BB\rightarrow BGL_{r}.$$ The elliptic loop space $L_{E}^{B}(*) = \hh(*/B)$ is the classical stack of tensor functors $$\operatorname{Rep}(B)\rightarrow\HE,$$ or equivalently $r$-step flags of one-dimensional objects of $\HE$. Under the Fourier--Mukai transform $\FM^*$ this is identified with the connected component $$\operatorname{Bun}^{0}_{B}(E)\subset\operatorname{Bun}_{B}(E)$$ which parameterizes flag bundles all of whose associated graded line bundles have degree $0$. Thus the map \[ L_E^B(*) \to L_E^G(*) \] is precisely the \emph{elliptic Grothendieck--Springer resolution} of \cite{BNEll}.

\end{document}